\def\bm{\boldsymbol}
\newcommand{\comment}[1]{}
\newcommand{\BEA}{\begin{eqnarray}}
\newcommand{\EEA}{\end{eqnarray}}
\newtheorem{thm}{Theorem}[section]
\newtheorem{prop}[thm]{Proposition}
\newtheorem{lem}[thm]{Lemma}
\newtheorem{defn}[thm]{Definition}
\newtheorem{rem}[thm]{Remark}
\newtheorem{assumption}[thm]{Assumption}
\newcommand{\PreserveBackslash}[1]{\let\temp=\\#1\let\\=\temp}
\newcolumntype{C}[1]{>{\PreserveBackslash\centering}p{#1}}
\newcolumntype{R}[1]{>{\PreserveBackslash\raggedleft}p{#1}}
\newcolumntype{L}[1]{>{\PreserveBackslash\raggedright}p{#1}}
\newcommand{\stkout}[1]{\ifmmode\text{\sout{\ensuremath{#1}}}\else\sout{#1}\fi}
\begin{document}

\title{Spectral methods for solving elliptic PDEs on unknown manifolds}
\author{ Qile Yan \\
Department of Mathematics, The Pennsylvania State University, University
Park, PA 16802, USA\\
\texttt{qzy42@psu.edu}\\
\And Shixiao Willing Jiang \\
Institute of Mathematical Sciences, ShanghaiTech University, Shanghai
201210, China\\
\texttt{jiangshx@shanghaitech.edu.cn} \\
\And John Harlim \\
Department of Mathematics, Department of Meteorology and Atmospheric
Science, \\
Institute for Computational and Data Sciences \\
The Pennsylvania State University, University Park, PA 16802, USA\\
\texttt{jharlim@psu.edu} }
\date{\today}
\maketitle

\begin{abstract}
In this paper, we propose a mesh-free numerical method for solving elliptic PDEs on unknown manifolds, identified with randomly sampled point cloud data. The PDE solver is formulated as a spectral method where the test function space is the span of the leading eigenfunctions of the Laplacian operator, which are approximated from the point cloud data. While the framework is flexible for any test functional space, we will consider the eigensolutions of a weighted Laplacian obtained from a symmetric Radial Basis Function (RBF) method induced by a weak approximation of a weighted Laplacian on an appropriate Hilbert space. {\color{black}In this paper}, we consider a test function space that encodes the geometry of the data yet does not require us to identify and use the sampling density of the point cloud. To attain a more accurate approximation of the expansion coefficients, we adopt a second-order tangent space estimation method to improve the RBF interpolation accuracy in estimating the tangential derivatives. This spectral framework allows us to efficiently solve the PDE many times subjected to different parameters, which reduces the computational cost in the related inverse problem applications. In a well-posed elliptic PDE setting with randomly sampled point cloud data, we provide a theoretical analysis to demonstrate the {\color{black}convergence} of the proposed solver as the sample size increases. We also report some numerical studies that show the convergence of the spectral solver on simple manifolds and unknown, rough surfaces. Our numerical results suggest that the proposed method is more accurate than a graph Laplacian-based solver on smooth manifolds. On rough manifolds, these two approaches are comparable. Due to the flexibility of the framework, we empirically found improved accuracies in both smoothed and unsmoothed Stanford bunny domains by blending the graph Laplacian eigensolutions and RBF interpolator.

\end{abstract}

\keywords{Radial Basis Function \and Galerkin approximation \and elliptic PDEs solver \and point cloud data}

\lhead{}

\newpage

\section{Introduction}

Many applications in the natural sciences and practical engineering involve solving Partial Differential Equations (PDEs) on manifolds. In image processing, applications include segmentation of images \cite{tian2009segmentation}, image inpainting \cite{shi2017weighted} and restoration of damaged patterns \cite{bertalmio2001navier,macdonald2010implicit}. In computer graphics, applications include flow field visualization \cite{bertalmio2001variational}, surface reconstruction \cite{zhao2001fast} and brain imaging \cite{memoli2004implicit}. In physics and biology, applications include granular flow \cite{rauter2018finite}, phase formation \cite{dogel2005two} and phase ordering \cite{schoenborn1999kinetics} on surfaces, Liquid crystals on deformable surfaces \cite{nitschke2020liquid} and phase separation of biomembranes \cite{elliott2010modeling}.

With such diverse applications, many numerical methods have been developed to approximate the solution of PDEs on manifolds. Given a triangular mesh as an approximation to the domain, the surface finite element method (FEM) \cite{dziuk2007surface,dziuk2013finite} is a robust and efficient method to discretize the PDE directly on the triangular mesh. But one issue with this class of approach is that triangular mesh can be difficult to obtain when the available point cloud is randomly sampled and when the manifolds are high-dimensional and/or embedded in a high-dimensional ambient space. {\color{black} In the absence of admissible and regular mesh, it is also unclear how to specify the corresponding test function space when the domain is unknown in the sense that it is  only identified by randomly sampled point cloud data which is the set-up considered in this paper.}

To avoid this issue, mesh-free approaches were developed. For example, {\color{black} several collocation methods, including the global Radial Basis Function (RBF) methods \cite{piret2012orthogonal,fuselier2013high} and the RBF-generated finite difference (FD) methods \cite{shankar2015radial,lehto2017radial} have been developed. In most of these approaches, they assumed that the manifolds are identified by point cloud data and the corresponding normal/tangent vectors at these points are given.} When the domain is identified only by point cloud data, these methods employ a manifold parameterization scheme, such as using local SVD approximation of the tangent space, level set methods \cite{bertalmio2001variational,greer2006improvement,xu2003eulerian},
closest point methods \cite{ruuth2008simple,petras2018rbf} and orthogonal gradient methods \cite{piret2012orthogonal}, and subsequently approximate the surface differentiation along the approximate tangent bundle. {\color{black}Another class of mesh-free approach is to identify a regression solution to the PDE by employing the Generalized Moving Least-Squares (GMLS) to approximate tangential derivatives on the point cloud data \cite{liang2013solving,gross2020meshfree}. In this class of approaches, they parameterize the metric tensor of the manifold locally by multivariate quadratic functions whereas the method in \cite{suchde2019meshfree} requires only the normal/tangent spaces at the point cloud data.} Alternatively, graph-based approaches, including Graph Laplacian, Diffusion Maps, Weighted Nonlocal Laplacian \cite{li2016convergent,li2017point,gh2019,jiang2020ghost,yan2022ghost}, do not require a parameterization of a point cloud and can handle high-dimensional manifolds and randomly sampled data, although limited to a certain class of differential operators.

{\color{black}Beyond the pointwise methods that we mentioned above, the variational approach has also been considered. For example, the approach in \cite{gross2018spectral} was proposed to solve PDEs on two-dimensional radial surfaces identified by Lebedev nodes. For this class of manifolds, they seek a PDE solution in a Hilbert space induced by the spherical harmonics and employ a Lebedev quadrature rule to approximate the inner product (or surface integral). The method proposed in the current paper is also a variational approach, where we extend the approach in \cite{gross2018spectral} to solve PDEs on any smooth manifolds identified by randomly sampled point cloud data. With this stringent characterization of the manifolds, we will identify an appropriate test function space that encodes the geometry and the distribution of the randomly sampled point cloud data that is also unknown.}

The work in this paper is motivated by the need for an efficient PDE solver to facilitate inverse problem algorithms for estimating parameters of PDE (see e.g., \cite{harlim2020kernel,harlim2022graph}), which plays a central role in modeling physical systems. Whether the maximum likelihood or Bayesian approach is used in the inversion method, the numerical algorithm usually requires an iterative procedure to compare the predicted observations corresponding to the proposed parameter value at the current iteration to the true observations. When the solution operator that maps the parameter to be determined to the PDE solution is not explicitly available, the computational cost of each iteration in this fitting procedure is dominated by the complexity of the corresponding PDE solver. To illustrate the issue more precisely, let us consider the following elliptic PDE,
\BEA
-\text{div}_g(\kappa \textup{grad}_g u)+c u=f\quad\text{on}\ M.
\label{eq1}
\EEA
Here, $M$ is a $d$-dimensional embedded compact submanifold of Euclidean space $\mathbb{R}^n$ without boundary, the differential operators are defined with respect to the Riemannian metric $g$, the parameter $c$ is a positive function, $\kappa$ is a positive diffusion coefficient, and $f$ is a known function defined on $M$. In our setup, we consider the domain to be unknown. Specifically, we have no access to either the Riemannian metric or the embedding function that parameterizes the manifold $M$. All we have is a set of independent random samples, $X=\{\mathbf{x}_1,\ldots,\mathbf{x}_N\}\subset M$ with an unknown sampling measure that is supported in $M$. If the corresponding inverse problem is to approximate $\kappa$, then we need an efficient solver to evaluate the following solution operator $\kappa\mapsto u(\kappa,\cdot)$,
where $u(\kappa,\cdot):M \to \mathbb{R}$ satisfies the PDE in \eqref{eq1}.

While there are many available PDE solvers as we listed above, a pointwise discretization of the PDE in \eqref{eq1} with any of these methods {\color{black}mentioned in the third paragraph above} on the point cloud data yields a linear system of $N\times N$ equation, $\mathbf{Lu}= \mathbf{f}$, with $\mathbf{u}=(u(\mathbf{x}_1),\ldots, u(\mathbf{x}_N))$ and $\mathbf{f}=(f(\mathbf{x}_1),\ldots, f(\mathbf{x}_N))$. The key issue with such a pointwise approximation is that while the matrix $\mathbf{L}$ depends on the function values $\boldsymbol{\kappa}=(\kappa(\mathbf{x}_1),\ldots,\kappa(\mathbf{x}_N))$, {\color{black}yet one cannot write an explicit expression for the map $\boldsymbol{\kappa} \mapsto \mathbf{L}^{-1}\mathbf{f}$, that is, how $ \mathbf{L}^{-1}\mathbf{f}$ depends on $\boldsymbol{\kappa}$ is unknown.} For example, one can see how the matrix $\mathbf{L}$ depends on $\kappa(\mathbf{x}_i)$ when the RBF interpolation or Diffusion maps approach is used to approximate the left-hand side of \eqref{eq1} in Section~\ref{sec4.1} on the point cloud, {\color{black} however,} the dependence of $\mathbf{L}^{-1}\mathbf{f}$ on $\boldsymbol{\kappa}$ is unclear. This same observation is also valid if one takes a closer inspection at the other solvers we mentioned above,
see e.g. Eq.(3.4) in \cite{liang2013solving} or Eq.~(42) in \cite{suchde2019meshfree} or Procedure~1 in \cite{li2016convergent} or Eq.~(38) in \cite{jiang2020ghost}. {\color{black}Thus, it is desirable to have an efficient way to invert the matrix $\mathbf{L}$.}

There are numerous ways to overcome this issue. An obvious method is to construct a sparse matrix $\mathbf{L}$, which was the key idea of the RBF-FD method  \cite{shankar2015radial,lehto2017radial}. With such an approximation, however, they reported less competitive accuracies compared to the global RBF approach, {\color{black}where the estimator also seemed to be sensitive to the distribution of the point cloud data and the choice of kernels as we will numerically verify in a test problem.} To overcome this issue, we consider a spectral framework that allows one to avoid the computational cost of an $N\times N$ matrix inversion in each evaluation of $\boldsymbol{\kappa} \mapsto \mathbf{L}^{-1}\mathbf{f}$. Specifically, we consider a Galerkin approximation where the test function spaces consist of a set of data-driven basis functions that encodes the geometry and sampling distribution of the point cloud data. Specifically, we consider a set of approximate eigenfunctions of a weighted Laplacian operator on the manifolds that can be constructed using any of the above operator approximation methods (including FEM, RBF, GMLS, or Diffusion Maps). In this paper, we will illustrate our idea with a symmetric RBF matrix induced by a weak approximation of the weighted Laplacian on {\color{black} an }appropriate Hilbert space \cite{harlim2022rbf}. This weak approximation guarantees non-negative real eigenvalues and real eigenfunction estimates, as opposed to the non-symmetric RBF matrix induced by the traditional pointwise approximation \cite{fuselier2013high,fornberg2015solving,shankar2015radial,lehto2017radial}. We should emphasize that since the basis is attained independent of the parameter $\kappa$, one needs to only solve the eigenvalue problem one time (or offline). Subsequently, a Galerkin projection of the PDE in \eqref{eq1} to the leading $K$-eigenspace reduces the problem of solving an $N\times N$ linear system to a $K\times K$ linear system, where the latter is computationally more tractable when $K \ll N$. Since the method is flexible, we will also adopt the framework using a data-driven basis constructed by a graph Laplacian approximation induced by the variable bandwidth diffusion maps \cite{bh:16vb} in some examples. To approximate the Galerkin expansion coefficients that depend on derivatives of the eigenfunctions, we employ the global and local RBF pointwise interpolation schemes (among many possible choices). Since RBF interpolation on the manifold setting is an extrinsic approximation, we implement a recently developed second-order local SVD scheme that allows one to accurately identify the tangent bundle on each point cloud data (see Section~3 of \cite{harlim2022rbf}). We should point out that while the approach is not restrictive to other PDEs and is just as efficient for any linear PDEs, we will only focus our discussion on the elliptic PDE in \eqref{eq1}. For this specific example, we will report the computational complexity and convergence study with error bounds in terms of the sample size, $N$, and the number of Galerkin modes truncation, $K$, under appropriate assumptions.

The paper is organized as follows. In Section~\ref{sec2}, we provide a short review of the classical RBF method for operator pointwise approximation. We also review a symmetric RBF approximation of Laplacians for solving the eigenvalue problem weakly and the second-order SVD scheme for approximating the tangent space pointwise for unknown manifolds. In Section~\ref{sec3}, we present the proposed spectral solver and the corresponding convergence analysis. In Section~\ref{section_numeric}, we demonstrate the numerical performance on simple manifolds and an unknown rough (and smoothed) surface. We conclude the paper with a summary and a list of open problems in Section~\ref{sec5}. We include some proofs in Appendix~\ref{AppA} {\color{black}and a brief overview of the RBF-FD approximation used in our numerical experiment in Appendix~\ref{AppB}.}

\section{Preliminaries}\label{sec2}
In this section, we first briefly review the radial basis function (RBF) interpolation. Then we describe the RBF projection method which is used to construct discrete operators for pointwise approximation of relevant differential operators that act on smooth functions on smooth manifolds. We also discuss a symmetric approximation to the Laplacian operator induced by the weak formulation on {\color{black}an }appropriate Hilbert space. Next, we review a method to estimate {\color{black} the }tangent space from point cloud data that lie on the manifold, which will give us an accurate estimation of the projection matrix which will be useful in the construction of the discrete operators.

\subsection{Review of RBF interpolation}\label{review-RBF}
\par We first review the radial basis function interpolation over a set of point cloud data. Let $M$ be a $d$-dimensional smooth manifold of $\mathbb{R}^n$. Given a set of (distinct) nodes $X=\{\textbf{x}_i\}_{i=1}^N\subset M$ and function values $\mathbf{f}:=(f(\textbf{x}_1),\ldots, f(\textbf{x}_N))^\top$\ at $X = \{\textbf{x}_{j}\}_{j=1}^{N}$, where $f:M\rightarrow\mathbb{R}$ is an arbitrary smooth function, a radial basis function (RBF) interpolant takes the form
\BEA
I_{\phi_s}\mathbf{f}(\textbf{x})=\sum_{j=1}^Nc_j\phi_s\big(\Vert \textbf{x}-\textbf{x}_j\Vert\big), \ \  \textbf{x}\in M
\label{RBF-form}
\EEA
where $c_j$ are determined by requiring $I_{\phi_s} \mathbf{f}|_X=\mathbf{f}$. Here, we have defined the interpolating operator $I_{\phi_s}:\mathbb{R}^{N} \to C^\alpha(\mathbb{R}^n)$, where $\alpha$ denotes the smoothness of the radial kernel $\phi_s$. In \eqref{RBF-form}, the notation $\Vert\cdot\Vert$ corresponds to the standard Euclidean norm in the ambient space $\mathbb{R}^n$. The interpolation constraints can be expressed as the following linear system
\BEA
\underbrace{\left[
\begin{array}{ccc}
\phi _{s}(\Vert \mathbf{x}_{1}-\mathbf{x}_{1}\Vert ) & \cdots  &
\phi _{s}(\Vert \mathbf{x}_{1}-\mathbf{x}_{N}\Vert ) \\
\vdots  & \ddots  & \vdots  \\
\phi _{s}(\Vert \mathbf{x}_{N}-\mathbf{x}_{1}\Vert ) & \cdots  &
\phi _{s}(\Vert \mathbf{x}_{N}-\mathbf{x}_{N}\Vert )%
\end{array}%
\right]}_{\boldsymbol{\Phi}}\underbrace{\left[\begin{array}{c}c_{1} \\ \vdots \\ c_{N}\end{array}\right]}_{\mathbf{c}}=\underbrace{\left[\begin{array}{c}f\left(\mathbf{x}_{1}\right) \\ \vdots \\ f\left(\mathbf{x}_{N}\right)\end{array}\right]}_{\mathbf{f}}
\label{RBF-inter}
\EEA
where $[\boldsymbol{\Phi}]_{i,j}=\phi_s(\Vert\textbf{ x}_i-\textbf{x}_j\Vert)$.

In literature, many types of radial functions have been proposed for the RBF interpolation. For example, the Gaussian function $\phi_s(r)=e^{-(sr)^2}$ \cite{fuselier2013high}, Multiquadric function $\phi_s(r)=\sqrt{1+(sr)^2}$, Wendland function $\phi_s(r)=(1-sr)^m_+p(sr)$ for some polynomials $p$, and the Inverse quadratic function $\phi_s(r)=1/(1+(sr)^2)$. In our numerical examples, we will implement with the Inverse quadratic function. While this kernel yields a positive definite matrix $\boldsymbol{\Phi}$, the matrix tends to have a high condition number, especially when the point cloud data is randomly sampled. To overcome this issue, we solve the linear problem in \eqref{RBF-inter} using the standard pseudo-inverse method with an appropriately specified tolerance.

\subsection{RBF pointwise approximation of surface differential operators}\label{sec2.2}
Now, we review the RBF projection method proposed in \cite{fuselier2013high} for a discrete approximation of surface differential operators on manifolds. The projection method represents the surface differential operators as tangential gradients, which are formulated as the projection of the appropriate derivatives in the ambient space. Let $M$ be a $d-$dimensional manifold embedded in the ambient Euclidean space, $\mathbb{R}^n$. For any point $\textbf{x}=(x^1,...,x^n)\in M$, we denote the tangent space of $M$ at $\textbf{x}$ as $T_\textbf{x}M$ and  a set of orthonormal vectors that span this tangent space as $\{\textbf{t}_i\}_{i=1}^d$. Then the projection matrix $\textbf{P}$ which projects vectors in $\mathbb{R}^n$ to $T_\textbf{x}M$ could be written as $\textbf{P}=\sum_{i=1}^d\textbf{t}_i\textbf{t}_i^\top$ (see e.g. \cite{harlim2022rbf} for a detailed derivation). Subsequently, the surface gradient on a smooth function $f:M\to \mathbb{R}$ evaluated at $\textbf{x}\in M$ in the Cartesian coordinates is given as,
$$
\textup{grad}_g f(\mathbf{x}):=\textbf{P}\overline{\textup{grad}}_{\mathbb{R}^n}f(\mathbf{x})=\left(\sum_{i=1}^d\textbf{t}_i\textbf{t}_i^\top\right)\overline{\textup{grad}}_{\mathbb{R}^n}f(\mathbf{x}),
$$
where $\overline{\textup{grad}}_{\mathbb{R}^n}=[\partial_{x^1}, \cdots, \partial_{x^n}]^\top$ is the usual gradient operator in Euclidean space and the subscript $g$ is to associate the differential operator to the Riemannian metric $g$ induced by $M$ from $\mathbb{R}^n$. Let $\mathbf{e}^\ell,\ell=1,...,n$ be the standard orthonormal vectors  in $x^\ell$ direction in $\mathbb{R}^n$, we can rewrite above expression in component form as
$$
\textup{grad}_g f (\mathbf{x}):=\left[\begin{array}{c}\left(\mathbf{e}^{1} \cdot \mathbf{P}\right) \overline{\textup{grad}}_{\mathbb{R}^n} f(\mathbf{x}) \\ \vdots\\ \left(\mathbf{e}^{n} \cdot \mathbf{P}\right) \overline{\textup{grad}}_{\mathbb{R}^n}f(\mathbf{x}) \end{array}\right]
=\left[\begin{array}{c}\mathbf{P}^{1} \cdot \overline{\textup{grad}}_{\mathbb{R}^n}f(\mathbf{x}) \\ \vdots \\ \mathbf{P}^{n} \cdot \overline{\textup{grad}}_{\mathbb{R}^n} f(\mathbf{x})\end{array}\right]:=\left[\begin{array}{c}\mathcal{G}_{1}f(\mathbf{x}) \\ \vdots\\ \mathcal{G}_{n} f(\mathbf{x})\end{array}\right]
$$
where $\textbf{P}^\ell$ is the $\ell-$th row of the projection matrix $\textbf{P}$. With this identity, one can write
the Laplace-Beltrami operator of $f$ at $\textbf{x}\in M$ as
$$
\Delta_{M}f (\mathbf{x}) :=-\textup{div}_{g} \textup{grad}_{g} f (\mathbf{x})=-\left(\mathbf{P} \overline{\textup{grad}}_{\mathbb{R}^n}\right) \cdot\left(\mathbf{P} \overline{\textup{grad}}_{\mathbb{R}^n}\right)f (\mathbf{x})=-\sum_{\ell=1}^n\mathcal{G}_{\ell} \mathcal{G}_{\ell}f (\mathbf{x}).
$$

Then we can approximate any differential operators (related to gradient and divergence) of functions $f$ by differentiating the RBF interpolant. That is, for $\ell=1,...,n$, and $\mathbf{x}_i \in X$,
$$
\mathcal{G}_\ell f(\textbf{x}_i)\approx (\mathcal{G}_\ell I_{\phi_s}\mathbf{f})(\textbf{x}_i)=\sum_{j=1}^Nc_j\mathcal{G}_\ell\phi_s(\Vert \textbf{x}_i-\textbf{x}_j\Vert),
$$
which can be written in matrix form as,
\BEA
\underbrace{\left[\begin{array}{c}\mathcal{G}_\ell f\left(\mathbf{x}_{1}\right) \\ \vdots \\ \mathcal{G}_\ell f\left(\mathbf{x}_{N}\right)\end{array}\right]}_{\mathcal{G}^\ell \mathbf{f}}\approx\mathbf{B}_\ell\underbrace{\left[\begin{array}{c}c_{1} \\ \vdots \\ c_{N}\end{array}\right]}_{\mathbf{c}}=\mathbf{B}_\ell \boldsymbol{\Phi}^{-1}\mathbf{f},\label{G_ell}
\EEA
where $[\mathbf{B}_\ell]_{i,j}=\mathcal{G}_\ell\phi_s(\Vert \textbf{x}_i-\textbf{x}_j\Vert)$ and we have used $\boldsymbol{\Phi}$ as defined in (\ref{RBF-inter}) in the last equality. Hence, the differential matrix for the operator $\mathcal{G}_\ell$ is given by
\BEA
\mathbf{G}_\ell:=\mathbf{B}_\ell \boldsymbol{\Phi}^{-1}
\label{diff-op}
\EEA
and the Laplace-Beltrami operator can be approximated pointwise at any $\mathbf{x}_i\in X$ as,
\BEA
\Delta_Mf (\mathbf{x}_i)=-\sum_{\ell=1}^n\mathcal{G}_\ell\mathcal{G}_\ell f (\mathbf{x}_i)\approx -\sum_{\ell=1}^n\mathcal{G}_\ell I_{\phi_s}( \mathbf{G}_\ell\mathbf{f})(\mathbf{x}_i)\approx - \big(\sum_{\ell=1}^n \mathbf{G}_\ell \mathbf{G}_\ell \mathbf{f}\big)_i:= \Delta_X^{\mathrm{RBF}} \mathbf{f} (\mathbf{x}_i). \label{NRBF_Laplacian}
\EEA

We should point out that the construction of the discrete approximation of $\Delta_M$ is a non-symmetric matrix, and hence, it does not guarantee that the eigenvalues are real-valued. In fact, it was reported in \cite{Fuselier2009Stability,piret2012orthogonal,fuselier2013high}. In particular, they numerically reported that when the number of training data is small, the eigenvalues of the non-symmetric RBF Laplacian matrix that approximates the negative-definite Laplace-Beltrami operator are not only complex-valued, but they can also be on the positive half-plane. These papers also empirically reported that this issue can be overcome with more data points. {\color{black} This phenomenon is also observed in \cite{harlim2022rbf}} where they also pointed out another important practical issue. In particular, they found that when the training data is not large enough and when $\mathbf{P}$ is not accurately estimated, the non-symmetric formulation tends to produce irrelevant eigenvalues that are not associated with any of the underlying spectra. Even with more training data, these irrelevant spectral estimates still persistently occur on the larger modes. In addition to these inconsistencies, we also numerically find that the estimate is not robust when the manifold is rough (which we will report in Section~\ref{section_bunny}). These issues motivate us to consider a symmetric approximation induced by the weak formulation, which we discuss in the next subsection.

\subsection{RBF symmetric approximation of Laplacians}

In this section, we discuss a symmetric discrete approximation of Laplacians induced by a weak formulation on appropriate Hilbert spaces. This formulation has several advantages, namely, the estimated eigenvalues are guaranteed to be non-negative real-valued, and the corresponding estimates for the eigenvectors
are real-valued and orthogonal. Besides, the spectral convergence {\color{black}(i.e., convergence of eigenvalues and eigenvectors, which is reported in Lemma~\ref{lem_eig_conv})} is guaranteed with a Monte-Carlo error rate provided that a smooth enough kernel is adopted.

As we mentioned above, this approximation is motivated by the following identity,
\BEA
- \int_M  h \textup{div}_g (q \textup{grad}_g f) d\textup{Vol} = \int_M \langle \textup{grad}_g h, \textup{grad}_g f \rangle_g q d\textup{Vol}, \quad \forall f,h \in C^\infty(M), \label{intbypart}
\EEA
which is valid for any closed manifold $M$. In \eqref{intbypart}, $\langle\cdot,\cdot\rangle_g$ denotes the Riemannian inner product of vector fields, $\text{Vol}$ denotes the volume form inherited by $M$ from the ambient space $\mathbb{R}^n$, and $q$ denotes the density function with respect to the volume form. If the data are uniformly and identically distributed, then $q\propto 1$ and the weighted Laplacian $-q^{-1}\textup{div}_g (q \textup{grad}_g\,)$ simplifies to the Laplace-Beltrami operator $\Delta_M$. It turns out that if we define $\mathbf{G}:\mathbb{R}^N \to \mathbb{R}^{nN}$,
\BEA
\mathbf{G} \mathbf{f} = \left( \mathbf{G}_1 \mathbf{f},
\mathbf{G}_2 \mathbf{f},
\ldots,
\mathbf{G}_n \mathbf{f}
\right)^\top, \label{sec2.1:eq2}
\EEA
as a discrete estimator to the gradient restricted on the training data set $X$ with sampling density $q$ and $\mathbf{G}_\ell$ is defined as in \eqref{diff-op}, then one can employ the following Monte-Carlo estimate:
\BEA
 \int_M \langle \textup{grad}_g f, \textup{grad}_g f \rangle_g q d\textup{Vol} \approx \frac{1}{N}  \mathbf{f}^\top\mathbf{G}^\top \mathbf{G} \mathbf{f}.\label{weightedLaplacian}
\EEA
We should also point out that if the sampling density $q>0$ is given or can be approximated, then one can define a diagonal matrix $\mathbf{Q}\in\mathbb{R}^{N\times N}$ with diagonal entries $\mathbf{Q}_{ii}= q(\mathbf{x}_i)$ for $\mathbf{x}_i\in X$, and approximate the Laplace-Beltrami operator as follows,
\BEA
\int_M  f \Delta_M f d\textup{Vol} =  \int_M \langle \textup{grad}_g f, \textup{grad}_g f \rangle_g  d\textup{Vol} \approx \frac{1}{N}   \mathbf{f}^\top\mathbf{G}^\top \mathbf{Q}^{-1} \mathbf{G} \mathbf{f},\label{LaplacianMCapprox}
\EEA
by re-weighting the non-uniform sampling density with $q^{-1}$. Since $q$ is not available to us in our problem, we choose to expand on eigenfunctions of the weighted Laplacian in \eqref{weightedLaplacian}, and thus, avoid error induced by the density estimation. In the next section, we will discuss solving the eigenvalue problem corresponding to the weak formulation in \eqref{weightedLaplacian} that effectively not only account for the geometry but also the sampling distribution of the data.

\subsection{A second-order local SVD method for tangent space estimation}\label{sec2.4}

Here, we give a short overview of two tangent space estimation methods, which rely on the following Taylor expansion of a local parameterization of the manifold $M$. Particularly, for any $\mathbf{x},\mathbf{y}\in M$, define a geodesic $\iota: \mathbb{R}^d \to M$ such that $\iota \left( \mathbf{s}\right)=\mathbf{y}$ and $\iota(\mathbf{0})= \mathbf{x}$. Consider the Taylor expansion of $\iota \left( \mathbf{s}\right) $ centered at $\mathbf{0},$ where $\mathbf{s}=(s_1,\ldots,s_d)\in \mathbb{R}^d$ is the geodesic normal coordinates of $\mathbf{y}$ corresponding to the usual arc-length parameterization such that $|\mathbf{s}|=s$,
\begin{equation}
\iota \left( \mathbf{s}\right) =\iota (\mathbf{0})+\sum_{i=1}^{d}s_{i}\frac{%
\partial \iota (\mathbf{0})}{\partial s_{i}}+\frac{1}{2}%
\sum_{i,j=1}^{d}s_{i}s_{j}\frac{\partial ^{2}\iota (\mathbf{0})}{\partial
s_{i}\partial s_{j}}+O(s^{3}).  \label{eqn:iots}
\end{equation}

The classical local SVD method \cite{donoho2003hessian,zhang2004principal,tyagi2013tangent} uses the difference vector $\mathbf{y}-\mathbf{x} = \iota(\mathbf{s})-\iota(\mathbf{0})$ to estimate $\mathbf{T} = (\bm{\tau}_1,\ldots,\bm{\tau}_d)$ (up to an orthogonal rotation) and subsequently use it to approximate $\mathbf{P}=\mathbf{T}\mathbf{T}^\top$. Numerically, the first-order local SVD method proceeds as follows:
\begin{enumerate}
\item For each $\mathbf{x}\in X$, let $\{\mathbf{y}_1,\ldots, \mathbf{y}_k\} \subset X$ be the $k$-neareast neighbor (one can also use a radius neighbor) of $\mathbf{x}$. Construct the distance matrix $\mathbf{D}:=[\mathbf{D}_1,\ldots, \mathbf{D}_k] \in \mathbb{R}^{n\times k}$, where $k>d$ and $\mathbf{D}_i: = \mathbf{y}_i-\mathbf{x}$ for $i=1,...,k$.
\item Take a singular value decomposition of $\mathbf{D} = \mathbf{U}\mathbf{\Sigma}\mathbf{V}^\top$. Then the leading $d-$ columns of $\mathbf{U}$ consists of $\mathbf{\tilde{T}}$ which approximates a span of column vectors of $\mathbf{T}$, which forms a basis of $T_{\mathbf{x}}M$.
\item Approximate $\mathbf{P}$ with $\mathbf{\tilde{P}} = \mathbf{\tilde{T}}\mathbf{\tilde{T}}^\top$.
\end{enumerate}
This method (as discussed in \cite{harlim2022rbf}) turns out to converge with a convergence rate of $N^{-1/d}$ for the estimation of $d$-dimensional manifolds. For low-dimensional ($d\leq 2$) manifolds and the symmetric approximation of the weighted Laplacian, it is sufficient to use the approximated projection operator $\tilde{\mathbf{P}}$ in applications since the overall error of the estimated eigensolutions is dominated by a Monte-Carlo rate (see Theorems~4.1-4.2 in \cite{harlim2022rbf}). For pointwise operator estimation, on the other hand, the error induced by the approximated projection operator $\tilde{\mathbf{P}}$ dominates the overall approximation error. In such a case, we will consider the second-order local SVD method in our numerical experiments with an improved error rate $N^{-2/d}$. The improvement is due to additional procedures that remove the error induced by the curvature terms in \eqref{eqn:iots}. Specifically, suppose we denote the components of the estimated local tangent vectors from the first-order local SVD method by $\mathbf{\tilde{T}} = [\bm{\tilde{t}}_1,\ldots, \bm{\tilde{t}}_d] \in \mathbb{R}^{n\times d}$. Instead of employing Step~3 above, we employ the following steps:
\begin{enumerate}
\item For each neighbor $\{\mathbf{y}_i\}_{i=1,\ldots,k}$ of $\mathbf{x}$, compute $\mathbf{\tilde{s}}^{(i)} = (\tilde{s}^{(i)}_1,\ldots,\tilde{s}^{(i)}_d)$, where
\[
\tilde{s}^{(i)}_j = \mathbf{D}_i^\top \bm{\tilde{t}}_j, \quad i= 1,\ldots, k, j=1,\ldots, d,
\]
where $\mathbf{D}_i: = \mathbf{y}_i-\mathbf{x}$ is the $i$th column of $\mathbf{D}\in \mathbb{R}^{n\times k}.$

\item Approximate the Hessian $\mathbf{Y}_{p}=\frac{\partial ^{2}\iota (%
\mathbf{0})}{\partial s_{i}\partial s_{j}} \in \mathbb{R}^n$ using the following ordinary least
squares regression, with $p= 1,\ldots ,D=d(d+1)/2$ denoting the upper triangular components ($p\mapsto (i,j)$ such that  $i\leq  j$) of symmetric Hessian matrix. Notice that for each $\mathbf{y}_\ell\in \{\mathbf{y}_1,\ldots, \mathbf{y}_k\}$ neighbor of $\mathbf{x}$, Eq.~(\ref%
{eqn:iots}) can be written as
\begin{equation}
\sum_{i,j=1}^{d}s_{i}^{(\ell)}s_{j}^{(\ell)}\frac{\partial ^{2}\iota (\mathbf{0})}{\partial
s_{i}\partial s_{j}}=2\left( \iota \left( \mathbf{s}\right) -\iota (\mathbf{0}%
)\right) - 2\sum_{i=1}^{d}s_{i}^{(\ell)}\bm{\tau}_i + O(s^3), \quad \quad \ell=1,\ldots k.  \label{eqn:sisj}
\end{equation}%
In compact form, we can rewrite \eqref{eqn:sisj} as a linear system,
\BEA
\mathbf{C} \mathbf{Y} = 2\mathbf{D}^\top - 2\mathbf{R}+ O(s^3),\label{eqn:Yti}
\EEA
where $\mathbf{R}^\top = (\mathbf{r}_1,\ldots,\mathbf{r}_k)\in\mathbb{R}^{n\times k}$ with each $\mathbf{r}_{j}: = \sum_{i=1}^{d}s_{i}^{(j)}\bm{\tau}_i$ being a vector in the tangent space $T_{\mathbf{x}}M$.
Here, $\mathbf{Y} \in \mathbb{R}^{D\times n}$ is a matrix whose $p$th row is  $\mathbf{Y}_{p}$
and
\BEA
\mathbf{C} = \begin{pmatrix} (s_1^{(1)})^2 & (s_2^{(1)})^2 & \ldots & (s_d^{(1)})^2 & 2(s_1^{(1)}s_2^{(1)}) & \ldots & 2(s_{d-1}^{(1)}s_d^{(1)})  \\ \vdots & \vdots  & & \vdots & \vdots  && \vdots\\ (s_1^{(k)})^2 & (s_2^{(k)})^2 & \ldots & (s_d^{(k)})^2 & 2(s_1^{(k)}s_2^{(k)}) & \ldots & 2(s_{d-1}^{(k)}s_d^{(k)})\end{pmatrix} \in \mathbb{R}^{k\times D}.\label{A_tangent}
\EEA
With the choice of $k>D$, we approximate $\mathbf{Y}$ by solving an over-determined linear problem
\[
\mathbf{\tilde{C}} \mathbf{Y} = 2\mathbf{D}^\top,
\]
where $\mathbf{\tilde{C}}$ is defined as in \eqref{A_tangent} except that  $s_i^{(j)}$ in the matrix entries is replaced by $\tilde{s}_i^{(j)}$.
The regression solution is given by $\tilde{\mathbf{Y}} =(\tilde{\mathbf{C}}^{\top }\tilde{\mathbf{C}})^{-1}\tilde{\mathbf{C}}^{\top }\mathbf{D}^\top$.
Here, each row of $\tilde{\mathbf{Y}}$ is denoted as $\tilde{\mathbf{Y}}_p = (\tilde{Y}_p^{(1)},\ldots,\tilde{Y}_p^{(n)})\in \mathbb{R}^{1\times n}$, which is an estimator of $\mathbf{Y}_p$ up to a difference of a vector in $T_{\mathbf{x}}M$.

\item Apply SVD to
\BEA
2\tilde{\mathbf{R}}^\top :=  2\mathbf{D} - (\tilde{\mathbf{C}}\tilde{\mathbf{Y}})^\top \in
\mathbb{R}^{n\times k}.\label{eq:Rtilde}
\EEA
Let the leading $d$ left singular vectors be denoted as $\bm{\hat{\Psi}}=[\hat{\bm{\psi}}_{1},\ldots ,\hat{\bm{\psi}}_{d}] \in \mathbb{R}^{n\times d}$, which is an estimator of $\bm{\Psi}=[\bm{\psi}_{1},\ldots, \bm{\psi}_{d}]$, where $\bm{\psi}_{j}$ are the leading $d$ left singular vectors of $\mathbf{R}$ as defined in \eqref{eqn:Yti}. We define $\mathbf{\hat{P}=\bm{\hat{\Psi}}\bm{\hat{\Psi}}}^{\top }$ as the estimator for $\mathbf{P} = \bm{\Psi}\bm{\Psi}^\top$.
\end{enumerate}
For the theoretical convergence and its consequence, see Theorem~3.2 and Remark~5 of \cite{harlim2022rbf}.

\section{Spectral PDE solvers}\label{sec3}
In this section, we consider solving the elliptic equation (\ref{eq1}) based on a Galerkin method. In general, we consider a Hilbert space $\mathcal{H}$ of functions $f:M\to \mathbb{R}$, equipped with an inner product $(\cdot,\cdot)_\mathcal{H}$ as a test function space. Suppose that $\mathcal{H}$ is a separable Hilbert space with a set of orthonormal basis functions $\{\varphi_j\}_{j=1}^\infty$.  A Galerkin approximation to the PDE (\ref{eq1}) is to find a solution $u$ of the form $u=\sum_{k=1}^Ku_k\varphi_k$ with unknown coefficients $\{u_k\}_{k=1}^K$ such that
\BEA
(c u-\textup{div}_g(\kappa\textup{grad}_g u),\varphi_j)_\mathcal{H}=(f,\varphi_j)_\mathcal{H},\quad \text{for}\ j=1,...,K.
\label{Galerkin}
\EEA
By the product rule of divergence operator on Riemannian manifolds, we have
$$
-\text{div}_g(\kappa \textup{grad}_g  u)=\kappa\Delta_M u-\langle \textup{grad}_g \kappa,   \textup{grad}_g  u\rangle_g.
$$
Hence, the Galerkin approximation (\ref{Galerkin}) can be rewritten as
\BEA
\sum_{k=1}^Ku_k\big(c\varphi_k+\kappa\Delta_M \varphi_k-\langle  \textup{grad}_g  \kappa,  \textup{grad}_g  \varphi_k\rangle_g,\varphi_j\big)_\mathcal{H}=\big(f,\varphi_j\big)_\mathcal{H},\quad \text{for}\ j=1,...,K.
\label{Galerkin2}
\EEA
The $K$ unkown coefficients $\textup{U}=(u_1,...,u_K)^\top$ can be obtained by solving {\color{black}the} above $K$ equations once each inner product $(\cdot,\cdot)_\mathcal{H}$ is calculated. For the theoretical analysis, we write the Galerkin linear system as $\textup{A} \textup{U} = \textup{b}$, where
\begin{equation}\label{Ab}
\begin{split}
\textup{A}_{jk} &:= \big(c\varphi_k+\kappa\Delta_M \varphi_k-\langle  \textup{grad}_g  \kappa,  \textup{grad}_g  \varphi_k\rangle_g,\varphi_j\big)_\mathcal{H},  \\
\textup{b}_j &:= \big(f,\varphi_j\big)_\mathcal{H}.
\end{split}
\end{equation}
In general, the exact values of integrals over $M$ are not easy to compute. We will resort to the relevant Monte-Carlo approximation over the point cloud data set $X=\{\textbf{x}_{i}\}_{i=1}^{N} \subset M$ that is assumed to be distributed in accordance to sampling measure $d\mu = q d\textup{Vol}$. The specific Monte-Carlo approximation will depend on the choice of the test function space $\mathcal{H}$ and its inner-product $(\cdot,\cdot)_{\mathcal{H}}$, which we will discuss in the next subsection. Subsequently, we conclude this section with a convergence analysis. In the following, we denote the weighted spaces $L^2(\mu):=\{u:\int_M u^2d\mu<\infty\}$ and $H^1(\mu):=\{u\in L^2(\mu): X u\in L^2(\mu) \text{ for any smooth vector field } X \text{on }M\}$.


\subsection{Discrete formulation} \label{sec3.1}

In this discussion, our aim is to develop a formulation that appropriately accounts for the sampling measure, $d\mu = qdV$, that is encoded by the sampled data. Particularly, we will consider a test function space that allows one to avoid the needs of using and/or estimating $q$. Specifically, we consider $\mathcal{H}$ to be $\overline{\textup{span}\{\varphi_k\}_{k\geq 0}} = L^2(\mu)$, where $\varphi_k$ denotes the $k$th-orthonormal eigenfunction associated to eigenvalue $\lambda_k\geq 0$ of the weighted Laplacian operator, $-q^{-1}\textup{div}_g (q\,\textup{grad}_g\,):H^2(\mu)\rightarrow L^2(\mu)$. In this case, given a set of point cloud data $X=\{\textbf{x}_{i}\}_{i=1}^{N} \subset M$ that is assumed to be distributed in accordance to sampling measure  $d\mu = q d\textup{Vol}$, we approximate the integral of any function $f\in M$ as,
\[
\int_M f d\mu \approx \frac{1}{N} \sum_{i=1}^N f(\mathbf{x}_i).
\]
Then, the equations in (\ref{Galerkin2}) can be numerically estimated by a $K\times K$ linear system, with the $j$th row given as,
\BEA
\sum_{k=1}^K \mathbf{A}_{jk} u_k= \frac{1}{N}\mathbf{f}^\top \boldsymbol{\varphi}_j,\quad \text{for}\ j=1,...,K,
\label{Galerkin-eigen2}
\EEA
where we have defined $\boldsymbol{\varphi}_j=(\varphi_j(\mathbf{x}_1),\ldots,\varphi_j(\mathbf{x}_N))^\top$, and
\BEA
\mathbf{A}_{jk}:=\frac{1}{N} \sum_{i=1}^N\Big(c(\textbf{x}_i) \varphi_k(\textbf{x}_i)+\kappa(\textbf{x}_i)\Delta_M\varphi_k(\textbf{x}_i)-\big\langle \textup{grad}_g \kappa(\textbf{x}_i), \textup{grad}_g \varphi_k(\textbf{x}_i)\big\rangle_g\Big)\varphi_j(\textbf{x}_i).\label{A}
\EEA

We approximate the eigen-solutions of $-q^{-1}\textup{div}_g (q\,\textup{grad}_g\,)$ by solving the eigenvalue problem corresponding to the Dirichlet energy in \eqref{weightedLaplacian} over all functions $H^1(\mu)$. Since,
\[
\langle -q^{-1}\textup{div}_g (q\,\textup{grad}_g \varphi_k),\varphi_k \rangle_{L^2(\mu)} =  \langle  \textup{grad}_g \varphi_k,\textup{grad}_g \varphi_k \rangle_{L^2(\mu)},
\]
for discrete data set $X$ one can employ the Monte-Carlo estimate in \eqref{weightedLaplacian} to approximate the orthonormal eigenfunctions (in $L^2(\mu)$ sense) by solving the following discrete eigenvalue problem:
\BEA
\Delta_X^{\mathrm{SRBF}} \boldsymbol{\tilde{\varphi}}_k := \mathbf{G}^\top \mathbf{G} \boldsymbol{\tilde{\varphi}}_k = \tilde{\lambda}_k \boldsymbol{\tilde{\varphi}}_k, \quad k= 1,2,\ldots.\label{sym_eigs}
\EEA

\comment{, we choose eigenfunctions of the following eigenvalue problem as the basis function in Galerkin problem (\ref{Galerkin3}): find eigen-pairs $(\lambda,\varphi)$ such that
\BEA
\int_M\langle \nabla \varphi,\nabla v\rangle_gqdV_g=\lambda\int_M\varphi(x)v(x) q(x)dV_g,\quad \text{for all }v\in H^1(M).
\label{eigen-sym}
\EEA
Here, $q$ is the density of sampling. By using the RBF differentiation matrix and Monte Carlo method, we have
$$
\int_M\langle \nabla u,\nabla v\rangle_gqdV_g\approx\frac{1}{N}\sum_{l=1}^n(G_X^l\bar{u})^T(G_X^l\bar{v})=\frac{1}{N}\bar{u}^T(\sum_{l=1}^n(G_X^l)^TG_X^l)\bar{v}
$$
and $\int_Mu(x)v(x)q(x)dV_g\approx\frac{1}{N}\bar{u}^T\bar{v}$ where $\bar{u}=(u(\textbf{x}_1),...,u(\textbf{x}_N))^T$ and $\bar{v}=(v(\textbf{x}_1),...,v(\textbf{x}_N))^T$. Then the discrete problem of equation (\ref{eigen-sym}) could be written as:
$$
(\sum_{l=1}^n(G_X^l)^TG_X^l)\bar{\tilde{\varphi}}=\tilde{\lambda}\bar{\tilde{\varphi}}.
$$
where $(\sum_{l=1}^n(G_X^l)^TG_X^l)$ is a symmetric matrix. Then we can approximate $\varphi$ over $X$ by using $\bar{\tilde{\varphi}}$. Note that if $q=1$ over $M$, then above equation is a discrete version of the weak formulation of eigenvalue problem:
$$(\Delta_M\varphi,v)_\mathcal{H}=\int_M\langle \nabla \varphi,\nabla v\rangle_gdV_g=\lambda (\varphi,v)_\mathcal{H}.$$
}

\comment{\color{black}
\[
 \langle -\textup{div} (q \nabla f), f  \rangle_{L^2(M)} =  (q\nabla f, \nabla f)_{L^2 (\mathfrak{X}(M))} = (\nabla f, \nabla f)_{L^2(\mathfrak{X}(\mu))} = \int \sum_{i=1}^d (\mathcal{G}_i f, \mathcal{G}_i f )_g d{\mu} \approx \frac{1}{N} \mathbf{f}^\top  \mathbf{G}^\top \mathbf{G}\mathbf{f}
\]
}

Applying the RBF pointwise approximation for the Laplacian in \eqref{NRBF_Laplacian} and approximating the eigenvectors based on the symmetric formulation in \eqref{sym_eigs}, we arrive at a linear system $\mathbf{\tilde{A}} \mathbf{\tilde{U}} \mathbf{= \tilde{b}}$ as an approximation to \eqref{Galerkin-eigen2} with,
\BEA
\mathbf{\tilde{A}}_{jk}&:=& \frac{1}{N} \sum_{i=1}^N\left((c(\textbf{x}_i)+\kappa(\textbf{x}_i)\Delta_X^{\mathrm{RBF}}) \boldsymbol{\tilde{\varphi}}_k(\textbf{x}_i)-\sum_{\ell=1}^n(\mathbf{G}_\ell \boldsymbol{\kappa})_i(\mathbf{G}_\ell\boldsymbol{\tilde{\varphi}}_k)_i\right) \boldsymbol{\tilde{\varphi}}_j(\textbf{x}_i), \label{Atilde2} \\
\mathbf{\tilde{b}}_j &:=& \frac{1}{N} \mathbf{f}^\top \boldsymbol{\tilde{\varphi}}_j.\label{btilde2}
\EEA

In the following, we outline a computational procedure for the numerical approximation. As inputs, we are given a set of point cloud data  $X=\{\textbf{x}_{i}\}_{i=1}^{N}$ sampled from the smooth manifold $M\subset\mathbb{R}^n$ and we assume that the function values of $f$ and $\kappa$ on $X$ are known. Then,
\begin{enumerate}[(1)]
\item Use the second-order local SVD method to attain $\mathbf{\tilde{P}}$ as a discrete estimate of the projection matrix $\mathbf{P}$.
\item Construct the discrete gradient operator $\mathbf{G}_\ell$ as in (\ref{diff-op}).
\item Solve the eigenvalue problem in \eqref{sym_eigs}.
\item  Precompute and store
\BEA
\mathbf{D}_{ijk}:= c(\textbf{x}_i) \boldsymbol{\tilde{\varphi}}_k(\textbf{x}_i) \boldsymbol{\tilde{\varphi}}_j(\textbf{x}_i), \quad\quad
\mathbf{E}_{ijk\ell}:=(\mathbf{G}_\ell\boldsymbol{\tilde{\varphi}}_k)_i \boldsymbol{\tilde{\varphi}}_j(\textbf{x}_i),\quad\quad
\mathbf{F}_{ijk}:= \Delta^{\mathrm{RBF}}_X \boldsymbol{\tilde{\varphi}}_k(\textbf{x}_i) \boldsymbol{\tilde{\varphi}}_j(\textbf{x}_i),
\label{eqn:24}
\EEA
and $\mathbf{\tilde{A}}^{(1)}_{jk}:= \sum_{i=1}^N \mathbf{D}_{ijk}$.
\item Compute
\[
\mathbf{\tilde{A}}^{(2)}_{jk}:= \sum_{i=1}^N \kappa(\textbf{x}_i) \mathbf{F}_{ijk}, \quad\quad
\mathbf{\tilde{A}}^{(3)}_{jk}:= \sum_{i=1}^N \sum_{\ell=1}^n (\mathbf{G}_\ell \boldsymbol{\kappa})_i  \mathbf{E}_{ijk\ell},
\]
and ${\mathbf{\tilde{A}}}_{jk} = \frac{1}{N} (\mathbf{\tilde{A}}^{(1)}_{jk}+\mathbf{\tilde{A}}^{(2)}_{jk}+\mathbf{\tilde{A}}^{(3)}_{jk})$ and $\mathbf{\tilde{b}}_{j}:=\frac{1}{N} \mathbf{f}^\top \boldsymbol{\tilde{\varphi}}_j$.
\item   Solve the $K\times K$ linear system $\mathbf{\tilde{A}} \mathbf{\tilde{U}} \mathbf{= \tilde{b}}$ to obtain the approximate solution $\tilde{u}^K(\textbf{x}_i)=\sum_{k=1}^K\tilde{u}_k\boldsymbol{\tilde{\varphi}}_k(\textbf{x}_i)$ where $\tilde{\mathbf{U}}=(\tilde{u}_1,\ldots, \tilde{u}_K)^\top$.
\end{enumerate}

We will refer to this approximation as the \emph{Spectral-RBF} scheme. We emphasize that the computational procedure above is to anticipate solving the PDE many times for different $\kappa$ and/or $f$, which arises in inverse problems (see e.g.~\cite{harlim2020kernel,harlim2022graph}). In such a case, Steps (1)-(4) can be performed once (or offline) as they are independent of $\kappa$. For each choice of $\kappa$, we execute steps (5)-(6) to solve the PDE. Since these last two steps depends on $\kappa$, we refer to it as online.

The overall computational cost of the algorithm could be estimated as follows:

\textbf{Offline}: Here, the time complexity is dominated by Step (3), that is $O(N^3)$  {\color{black} operations} (see e.g., \cite{pan1998complexity}). Additionally, the space complexity of storing $\mathbf{G},\mathbf{D}, \mathbf{E}, \mathbf{F}, \mathbf{\tilde{A}}^{(1)}$ to be used in the online steps are $nN^2+NK^2+nNK^2+NK^2+K^2$, combining altogether in a consecutive fashion.



\textbf{Online}: Given $\kappa$ and $f$, the Step~(5) above involves calculating $n$ times matrix-vector multiplications ($\mathbf{G}_\ell \boldsymbol{\kappa},1\leq \ell\leq n$), $K^2$ times the computations of $\mathbf{\tilde{A}}^{(2)}_{jk}$, $\mathbf{\tilde{A}}^{(3)}_{jk}$, $K$ times the computations of $\mathbf{\tilde{b}}_j$, and $2K^2$-additions in constructing $\mathbf{\tilde{A}}$. The time complexities of these computations are consecutively given as, $O(nN^2)+O(K^2N)+O(K^2Nn) + O(KN)+O(K^2)$  {\color{black} operations}, in addition to $O(K^3)$ {\color{black} operations} for solving the linear problem in Step~(6). {\color{black} We should point out that the time complexity in Step~(5) above can be further reduced by a parallel computation on each component of $\mathbf{\tilde{A}}^{(2)}_{jk}$, $\mathbf{\tilde{A}}^{(3)}_{jk}$.}

If one only wants to solve the PDE one time, we should point out that there is no computational saving with the algorithm above relative to the direct inversion of a pointwise approximation, which requires $O(N^3)$  {\color{black} operations}  for an inverse of an $N\times N$ matrix. However, if one wants to solve the PDE many times for various choices of $\kappa$, the algorithm above simply moves the online cost of $O(N^3)$ {\color{black} operations}  from the direct inversion for each $\kappa$ to an offline computational task of solving the eigenvalue problem in Step~(3) since this step is independent of $\kappa$, in addition to the storage complexity from the offline steps. The online computational cost becomes efficient whenever $K\ll N$.

We should point out that another means to overcome the high complexity in global RBF is to sacrifice the accuracy with the computationally cheaper local RBF interpolation technique \cite{shankar2015radial,lehto2017radial} for sparse matrices $\{\mathbf{G}_\ell \}_{\ell=1,\ldots,n}$. {\color{black}We will consider this local interpolation scheme on one of our numerical examples and will refer to this scheme as \emph{Spectral-RBF-FD}.} If we employed the local RBF interpolation, the time complexity in our offline step (2) would  reduce to $O(N\log N)$  {\color{black} operations}  where the constant can depend on the number of local neighbors taken, the complexity in step (3) would also reduce to $O(K^2N)$  {\color{black} operations}, and the online steps (5) and (6) would also reduce to $O(N\log N)$  {\color{black} operations} , thanks to the sparsity of $\mathbf{G}_\ell$.

\begin{rem}[Alternative basis]\label{altbasis} While we discussed an expansion of a particular test function space induced by the leading eigenfunctions of the weighted Laplacian operator, $-q^{-1}\textup{div}_g (q\,\textup{grad}_g\,)$, the proposed formulation can be employed using basis functions of other second-order elliptic operators, such as the Laplace-Beltrami operator, $\Delta_M$. In fact, one can also use eigensolutions from different approximation methods, such as the graph Laplacian-based approximation. In our numerical study, we will demonstrate such an approach with the leading eigenbasis of the Laplace-Beltrami operator that is estimated by the Variable Bandwidth Diffusion Maps (VBDM) algorithm \cite{bh:16vb} (which will be discussed in detail in Section~\ref{sec4.1}). In such {\color{black} an }approximation, however, one approximates only the function values $\varphi_j(\mathbf{x}_i)$ at all training data point $\mathbf{x}_i\in X$. To evaluate the derivatives of the functions, such as $\textup{grad}_g \varphi_j(\mathbf{x}_i)$ and $\Delta_M \varphi_j(\mathbf{x}_i)$, which is needed in approximating components of $\mathbf{A}$ in \eqref{A}, we still employ the RBF pointwise approximation. We should point out that although the VBDM solution produces the eigenvalues associated to the eigenfunction estimates that can be used to approximate $\Delta_M \varphi_j(\mathbf{x}_i) = \lambda_j \varphi_j(\mathbf{x}_i) \approx \tilde{\lambda}_j^{\mathrm{VBDM}} \boldsymbol{\tilde{\varphi}}_j^{\mathrm{VBDM}}(\mathbf{x}_i)$, we numerically found that it is more accurate to use the pointwise RBF approximation, $\Delta_M \varphi_j(\mathbf{x}_i)\approx \Delta_X^{\mathrm{RBF}} \boldsymbol{\tilde{\varphi}}_j^{\mathrm{VBDM}}(\mathbf{x}_i)$. We will refer to this approach as VBDM-RBF.

\end{rem}

\subsection{Convergence analysis}

In this section, we will study the convergence of the proposed Galerkin approximation. To simplify the notation, we only consider uniformly distributed data such that $q\propto 1$. The convergence result is stated in Theorem \ref{error-estimate}. For the weighted space, the analysis follows the same argument with a weighted $L^2$ space with respect to a positive smooth density function $q$ {that is bounded, $0<q_{min}\leq q(x)\leq {q}_{max}$}.

Quoting the basic result for the elliptic PDE in \eqref{eq1}, we assume that $\kappa \in C^1(M), c\in C(M)$ with $\kappa(x)\geq \kappa_{min}>0$ and $c(x)\geq c_{min}>0$ for all $x\in M$. In the following, we denote the $L^2$ inner product on $M$ as $\langle\cdot,\cdot\rangle_{L^2(M)}$. For two vector fields $X,Y\in\mathfrak{X}(M)$, we define the inner product $\langle X,Y\rangle_{L^2(\mathfrak{X}(M))}:=\int_M\langle X,Y\rangle_gd\textup{Vol} $. By applying integration by parts on closed manifolds, a weak formulation of the elliptic problem (\ref{eq1}) is to find $u\in V= H^1(M)$ such that
\BEA
B(u,v)=l(v),\quad \forall v\in V,
\label{weak-form}
\EEA
where $l(v)=\langle f,v\rangle_{L^2(M)}$ and the bilinear form $B(\cdot,\cdot)$ is defined by
$$
B(u,v)=\langle \kappa\textup{grad}_g u,\textup{grad}_g v\rangle_{L^2(\mathfrak{X}(M))}+\langle c u,v\rangle_{L^2(M)}.
$$
The existence and uniqueness of the solution $u$ follows from the Lax-Milgram theorem and the inequality: $\min\{c_{min},\kappa_{min}\}\Vert v\Vert_{H^1(M)}\leq B(v,v)$.

\begin{rem}
We should point out that the weak solution also exists when we consider $L^2(\mu)$ as the test function space as discussed in {\color{black}the} previous section. In this case, suppose that $u\in H^2(\mu)$ is the solution of the elliptic problem (\ref{eq1}), then
\BEA
\langle -\textup{div}_g(\kappa\, \textup{grad}_g u)+c u,v\rangle_{L^2(\mu)}=\langle f,v\rangle_{L^2(\mu)}, \quad\forall v\in L^2(\mu).
\EEA
This variational formula suggests that the weak formulation is to find $u\in H^1(\mu)$ that solves,
$$
B_q(u,v):= \langle \kappa\,\textup{grad}_g u,\textup{grad}_g (v q)\rangle_{L^2(\mathfrak{X}(M))}+\langle c u,v\rangle_{L^2(\mu)} =\langle f,v\rangle_{L^2(\mu)} := l_q(v),\quad \forall u, v\in H^1(\mu).
$$

From the ellipticity condition, we have
\BEA
\kappa_{min}\Vert \textup{grad}_g u\Vert_{L^2(\mu)}^2 &\leq& \langle q \kappa\textup{grad}_g u,\textup{grad}_g u\rangle_{L^2(\mathfrak{X}(M))} \notag \\
&=& B_q(u,u) -\int_M\kappa u \langle \textup{grad}_g u,\textup{grad}_g q\rangle_g d\textup{Vol}  -\int_Mc  u^2q d\textup{Vol}  \notag \\
&\leq& B_q(u,u)  +\Vert\kappa/q\Vert_{L^\infty(M)} \int_Mq|\textup{grad}_g q|_g  \left\langle \textup{grad}_g u, u\frac{\textup{grad}_g q}{|\textup{grad}_g q|_g}\right\rangle_g d\textup{Vol} +  \|c\|_{L^\infty}\Vert u\Vert^2_{L^2(\mu)}\notag\\
&\leq& B_q(u,u)+\Vert\kappa/q\Vert_{L^\infty(M)}\Vert  |\textup{grad}_g q|_g\Vert_{L^\infty(M)} \left(\varepsilon \Vert \textup{grad}_g u\Vert_{L^2(\mu)}^2+\frac{1}{4\varepsilon}\Vert u\Vert^2_{L^2(\mu)}\right)+  \|c\|_{L^\infty}\Vert u\Vert^2_{L^2(\mu)}\notag
\EEA
for any $\varepsilon>0$. Here, $|X|_g:=\langle X,X\rangle_g^{1/2}$ for any vector field $X\in\mathfrak{X}(M)$. Choose $\varepsilon>0$ such that $\Vert\kappa/q\Vert_{L^\infty}\Vert  |\textup{grad}_g q|_g\Vert_{L^\infty} \varepsilon = \kappa_{min}/2$, we have an energy estimate that is similar to the one stated in Theorem~2 in Section 6.2.2 of \cite{evans2010partial},
\[
\beta\Vert \textup{grad}_g u\Vert_{L^2(\mu)}^2 \leq B_q(u,u)+ \gamma \Vert u\Vert_{L^2(\mu)}^2,
\]
where $\beta := \kappa_{min}/2>0$ and $\gamma:= \frac{1}{4\varepsilon}\Vert\kappa/q\Vert_{L^\infty}\Vert  |\textup{grad}_g q|_g\Vert_{L^\infty} +\|c\|_{L^\infty}$. While this suggests that $B_q$ is not coercive with respect to $H^1(\mu)$,
one can follow the argument in the proof of Theorem~4 in Section 6.2.3 of \cite{evans2010partial} to verify the existence of weak solution $u\in H^1(\mu)$.
\end{rem}

\comment{
Denote the weighted space $L^2(M,d\mu):=\{v: q^{1/2}v\in L^2(M)\}$ with $d\mu=qd\textup{Vol}$ equipped with norm $\Vert u\Vert_{L^2_q}:=\left(\int_M u^2qd\textup{Vol}\right)^{1/2}$. Let $H^1(M,d\mu):=\{v: q^{1/2}v\in H^1(M)\}$ be the weighted Sobolev space equipped with norm $\Vert u \Vert_{H^1_q}:=\left(\Vert \nabla u\Vert_{L^2_q}^2+\Vert u\Vert_{L^2_q}\right)^{1/2}$. If $u\in H^2(M,d\mu)$ is the solution of the elliptic problem (\ref{eq1}), then
\BEA
\langle -\text{div}(\kappa \nabla u)+c u,v\rangle_{L^2_q}=\langle f,v\rangle_{L^2(M,d\mu)}, \quad\forall v\in L^2(M,d\mu).
\EEA
To discuss the The weak formulation is to find $u\in H^1(M,d\mu)$ such that
$$
B_q(u,v)=l_q(v),\quad \forall v\in u\in H^1(M,d\mu)
$$
where $l_q(v)=\langle f,v\rangle_{L^2(M,d\mu)}$ and the bilinear form $B_q(\cdot,\cdot)$ is defined by
$$
B_q(u,v)=\langle \kappa\nabla u,\nabla (v q)\rangle_{L^2(M)}+\langle c u,v\rangle_{L^2(M,d\mu)}.
$$
}

\comment{Observe that
$$
\begin{aligned}
\kappa_{min}\Vert \nabla u\Vert_{L^2(\mu)}^2+c_{min}\Vert u\Vert_{L^2(\mu)}&\leq\langle q \kappa\nabla u,\nabla u\rangle_{L^2(M)}+\langle c u,u\rangle_{L^2(M,d\mu)}\\
&=B_q(u,u)-\int_M\kappa u \langle \nabla u,\nabla q\rangle_g d\textup{Vol}\\
&=B_q(u,u)+\Vert\kappa/q\Vert_{L^\infty(M)} \int_Mq|\nabla q|_g  \left\langle \nabla u, u\frac{\nabla q}{|\nabla q|_g}\right\rangle_g d\textup{Vol}\\
&\leq B_q(u,u)+\Vert\kappa/q\Vert_{L^\infty}\Vert  |\nabla q|_g\Vert_{L^\infty} \left(\varepsilon \Vert \nabla u\Vert_{L^2(\mu)}^2+\frac{1}{4\varepsilon}\Vert u\Vert_{L^2(\mu)}\right)
\end{aligned}
$$
and thus
$$
\min\left\{\kappa_{min}-\Vert\kappa/q\Vert_{L^\infty}\Vert |\nabla q|_g\Vert_{L^\infty} \varepsilon,c_{min}-\frac{\Vert\kappa/q\Vert_{L^\infty}\Vert |\nabla q|_g\Vert_{L^\infty}}{4\varepsilon}\right\}\Vert u\Vert_{H^1_q}\leq B_q(u,u).
$$
To make the left hand side as large as possible, we need to pick $\varepsilon>0$ such that
$$
\kappa_{min}-\Vert\kappa/q\Vert_{L^\infty}\Vert |\nabla q|_g\Vert_{L^\infty} \varepsilon=c_{min}-\frac{\Vert\kappa/q\Vert_{L^\infty}\Vert |\nabla q|_g\Vert_{L^\infty}}{4\varepsilon}:=\gamma
$$
In this end, we choose $\varepsilon=\frac{\kappa_{min}-c_{min}+\sqrt{(c_{min}-\kappa_{min})^2+(\Vert\kappa/q\Vert_{L^\infty}\Vert |\nabla q|_g\Vert_{L^\infty})^2}}{2\Vert\kappa/q\Vert_{L^\infty}\Vert |\nabla q|_g\Vert_{L^\infty}}$ and then $\gamma=\frac{\kappa_{min}+c_{min}-\sqrt{(c_{min}-\kappa_{min})^2+(\Vert\kappa/q\Vert_{L^\infty}\Vert |\nabla q|_g\Vert_{L^\infty})^2}}{2}$. Here, we need the assumption that $4\kappa_{min}c_{min}>\Vert\kappa/q\Vert_{L^\infty}\Vert |\nabla q|_g\Vert_{L^\infty}$.
}

Furthermore, the regularity theorem (Chapter 5, Proposition 1.6 in \cite{taylor2013partial}) implies that if $f\in L^2(M)$ and $u\in H^1(M)$ is the unique solution, then $u\in H^2(M)$ with a constant $C>0$ such that
$$
\Vert u\Vert_{H^2(M)}\leq C\Vert f\Vert_{L^2(M)}.
$$
Let $V^K=\text{span}\{\varphi_k\}_{k=1}^K\subset V$, then a discrete problem of the above weak formulation in (\ref{weak-form})  is to find $u^K\in V^K$ such that
\BEA
B(u^K,v)=l(v),\quad \forall v\in V^K.
\label{weak-form-finite}
\EEA

Let $\tilde{u}^K$ be the solution found by the scheme Spectral-RBF proposed  in the last section, i.e.,
\BEA
\tilde{u}^K=\sum_{k=1}^K\tilde{u}_k\tilde{\varphi}_k
\label{spectral-RBF-soln}
\EEA
where we let $\tilde{\varphi}_k:= I_{\phi_s}\boldsymbol{\tilde{\varphi}}_k$ and the coefficient $\tilde{\mathbf{U}}=(\tilde{u}_1,\ldots, \tilde{u}_K)^\top$ is the solution of $\tilde{\mathbf{A}}\tilde{\mathbf{U}}=\tilde{\mathbf{b}}$ with $\tilde{\mathbf{A}}$ and $\tilde{\mathbf{b}}$ defined in (\ref{Atilde2})-(\ref{btilde2}).   \comment{{\color{black} need to show that $\tilde{A}$ is positive definite matrix or nonsingular.} {\color{black} This is in the Assumption \eqref{Anonsingular_assumption}, which is our case for $\epsilon \textup{cond}(\textup{A}) < 1$ in Lemma~3.9, which is equivalent to saying that matrix $\tilde{\mathbf{A}}$ is nonsingular. You can check Theorem 2.1 in James Demmel. Almost the entire proof in Lemma~3.12 to check the size of perturbation that one cannot exceeds to guarantee invertibility of $\tilde{\mathbf{A}}$.}}

The error estimates in {\color{black}the} $L^2$ norm can be divided into two parts:
\BEA
\Vert u-\tilde{u}^K\Vert_{L^2(M)}\leq \Vert u-u^{K}\Vert_{L^2(M)}+\Vert u^K-\tilde{u}^{K}\Vert_{L^2(M)},
\label{error_estimate}
\EEA
where the first part corresponding to the bias-term induced by the truncation and can be bounded above by the standard approach involving Cea's Lemma and spectral interpolation error estimate. Specifically, we can have {\color{black} an }error bound on $u-u^K$ in $H^1$ norm,
\begin{lem} Let $u\in H^2(M)$ be a solution of (\ref{weak-form}) and $u^K\in V^K$ be a solution of (\ref{weak-form-finite}). Here, $V^K=\textup{span}\{\varphi_k\}_{k=1}^K$  with $\{\lambda_k\}$ the associated increasing-ordered eigenvalues of $\Delta_M$ on $M$. then
\BEA
\Vert u-u^K\Vert_{H^1(M)}\leq C \left(\frac{2}{1+\lambda_K}\right)^{1/2}\Vert u\Vert_{H^2(M)}.
\label{cea-2}
\EEA
\end{lem}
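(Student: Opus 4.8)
The plan is to combine Céa's lemma with a spectral estimate of the best-approximation error. First I would record the two properties of the bilinear form $B$ on $V=H^1(M)$. Coercivity is already supplied in the excerpt, $B(v,v)\geq \min\{c_{min},\kappa_{min}\}\Vert v\Vert_{H^1(M)}^2$, and boundedness $|B(u,v)|\leq M\Vert u\Vert_{H^1(M)}\Vert v\Vert_{H^1(M)}$ with $M=\max\{\Vert\kappa\Vert_{L^\infty},\Vert c\Vert_{L^\infty}\}$ follows from Cauchy--Schwarz since $\kappa\in C^1(M)$ and $c\in C(M)$ are bounded. Together with the Galerkin orthogonality $B(u-u^K,v)=0$ for all $v\in V^K$, the standard Céa argument gives
$$
\Vert u-u^K\Vert_{H^1(M)}\leq \frac{M}{\min\{c_{min},\kappa_{min}\}}\inf_{v\in V^K}\Vert u-v\Vert_{H^1(M)}.
$$

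Next I would bound the infimum by inserting the spectral truncation $v=P_Ku:=\sum_{k=1}^K u_k\varphi_k$, where $u=\sum_{k\geq 1}u_k\varphi_k$ is the expansion of $u$ in the $L^2(M)$-orthonormal eigenbasis of $\Delta_M$ and $u_k=\langle u,\varphi_k\rangle_{L^2(M)}$. The key structural identity is that, on a closed manifold, integration by parts yields $\Vert \textup{grad}_g v\Vert_{L^2(\mathfrak{X}(M))}^2=\langle \Delta_M v,v\rangle_{L^2(M)}$, so that for $v=\sum_k v_k\varphi_k$ one obtains the Parseval-type relation $\Vert v\Vert_{H^1(M)}^2=\sum_k(1+\lambda_k)v_k^2$. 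Applying this to $u-P_Ku$ gives the clean tail expression $\Vert u-P_Ku\Vert_{H^1(M)}^2=\sum_{k>K}(1+\lambda_k)u_k^2$.

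Then I would estimate the tail using only the eigenvalue ordering and elementary scalar inequalities. Since the $\lambda_k$ are increasing, $(1+\lambda_k)\leq \tfrac{(1+\lambda_k)^2}{1+\lambda_{K+1}}\leq \tfrac{(1+\lambda_k)^2}{1+\lambda_K}$ for $k>K$, and the inequality $(1+\lambda_k)^2\leq 2(1+\lambda_k^2)$ (equivalent to $(1-\lambda_k)^2\geq 0$) produces the factor $2$ in the statement, giving
$$
\sum_{k>K}(1+\lambda_k)u_k^2\leq \frac{2}{1+\lambda_K}\sum_{k\geq 1}(1+\lambda_k^2)u_k^2=\frac{2}{1+\lambda_K}\big(\Vert u\Vert_{L^2(M)}^2+\Vert \Delta_M u\Vert_{L^2(M)}^2\big).
$$
The elliptic $H^2$ regularity already quoted in the excerpt then bounds $\Vert u\Vert_{L^2(M)}^2+\Vert\Delta_M u\Vert_{L^2(M)}^2\leq C'\Vert u\Vert_{H^2(M)}^2$, and absorbing $C'$ together with the Céa constant into $C$ yields the claimed estimate with the factor $\tfrac{2}{1+\lambda_K}$.

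I expect the only genuinely non-elementary point to be the rigorous justification of the Parseval relation $\Vert v\Vert_{H^1(M)}^2=\sum_k(1+\lambda_k)v_k^2$ and the comparison $\sum_k(1+\lambda_k^2)u_k^2\leq C'\Vert u\Vert_{H^2(M)}^2$. Both rest on the spectral theorem for the self-adjoint operator $\Delta_M$ on the closed manifold $M$, so that $\{\varphi_k\}$ is a complete orthonormal system simultaneously diagonalizing the $L^2(M)$ and $H^1(M)$ inner products, together with the boundedness of $(I+\Delta_M):H^2(M)\to L^2(M)$ from elliptic regularity. Once these facts are in place, the remaining steps reduce to the scalar inequalities above.
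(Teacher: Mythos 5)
Your proposal is correct and follows essentially the same route as the paper: Céa's lemma, the spectral truncation $v^K=\sum_{k\le K}\langle u,\varphi_k\rangle\varphi_k$, the Parseval-type identity $\Vert v\Vert_{H^1(M)}^2=\sum_k(1+\lambda_k)v_k^2$, and the eigenvalue-ordering tail bound. The only cosmetic difference is the final algebraic step, where you use $(1+\lambda_k)^2\le 2(1+\lambda_k^2)$ while the paper expands $(1+\lambda_k)^2=1+2\lambda_k+\lambda_k^2$ and identifies the sum with $\Vert u\Vert_{L^2(M)}^2+2\vert u\vert_{H^1(M)}^2+\Vert\Delta_M u\Vert_{L^2(M)}^2\le 2\Vert u\Vert_{H^2(M)}^2$; both yield the same bound.
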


\begin{proof} It follows immediately from Cea's Lemma that there exists a constant $C$ such that
\BEA
\Vert u-u^K\Vert_{H^1(M)} \leq C \inf\limits_{v\in V^K}\Vert u-v\Vert_{H^1(M)}.
\label{cea}
\EEA
Let $u=\sum_{k=1}^\infty\langle u,\varphi_k\rangle_{L^2} \varphi_k$ and $v^K=\sum_{k=1}^K\langle u,\varphi_k\rangle_{L^2} \varphi_k\in V^K$, then we have
$$
\big\Vert u-v^K\big\Vert^2_{H^1(M)}\leq \big\Vert \sum_{k=K+1}^\infty \langle u,\varphi_k\rangle_{L^2(M)} \varphi_k\big\Vert^2_{H^1(M)}=  \sum_{k=K+1}^\infty (1+\lambda_k)\langle u ,\varphi_k\rangle_{L^2(M)}^2.
$$
Here, we use the assumption that $\varphi_k$ are eigenfunctions of $\Delta_M$ which implies that
$$\langle \textup{grad}_g \varphi_i,\textup{grad}_g\varphi_j\rangle_{L^2(\mathfrak{X}(M))}=\lambda_i\langle \varphi_i,\varphi_j\rangle_{L^2(M)}=\lambda_i\delta_{ij}.$$
Note that $\{\lambda_k\}$ are the increasing ordered eigenvalues, then we have the spectral interpolation error:
\BEA
\Vert u-v^K\Vert^2_{H^1(M)}\leq  \sum_{k=K+1}^\infty (1+\lambda_k)\langle u ,\varphi_k\rangle^2_{L^2(M)}\leq \frac{1}{1+\lambda_K}\sum_{k=1}^\infty (1+\lambda_k)^2\langle u ,\varphi_k\rangle^2_{L^2(M)}\leq \frac{2}{1+\lambda_K}\Vert u\Vert^2_{H^2(M)},
\label{spectral-interpolation}
\EEA
where in the last inequality we use the fact that
$$
\sum_{k=1}^\infty (1+\lambda_k)^2\langle u ,\varphi_k\rangle_{L^2(M)}^2=\sum_{k=1}^\infty (1+2\lambda_k+\lambda_k^2)\langle u ,\varphi_k\rangle_{L^2(M)}^2=\Vert u\Vert^2_{L^2(M)}+2\vert u\vert^2_{H^1(M)}+\Vert \Delta_Mu\Vert^2_{L^2(M)} \leq 2\Vert u\Vert^2_{H^2(M)}.
$$
Take $v=v^K$ in (\ref{cea}) and  combine it with the inequality (\ref{spectral-interpolation}), we arrive at the desired bound in \eqref{cea-2}.
\comment{then we have
$$
\Vert u-u^K\Vert_{H^1} \leq C \inf\limits_{v\in V^K}\Vert u-v\Vert_{H^1}\leq C \left(\frac{2}{1+\lambda_K}\right)^{1/2}\Vert u\Vert_{H^2}.
$$}
\end{proof}

\begin{rem}
If $u$ is in the weighted space $ H^2(\mu)$, where $d\mu = q d\textup{Vol}$, and we take $\varphi_k$ be the eigenfunction of the weighted Laplacian operator $-q^{-1}\textup{div}_g (q \textup{grad}_g\,)$ that is self-adjoint in $L^2(\mu)$, i.e., $\langle \textup{grad}_g \varphi_i,\textup{grad}_g v\rangle_{L^2(\mu)}=\lambda \langle\varphi_i,v\rangle_{L^2(\mu)}, \forall v\in H^1(\mu)$, then above inequality also holds in the weighted Sobolev norm.
\end{rem}

By combining the lemma above and the standard Aubin-Nitche duality argument, we can have an upper bound of the first term on right hand side of (\ref{error_estimate}).
\begin{lem}\label{lemma_error1}
 Let $u\in H^2(M)$ be a solution of (\ref{weak-form}) and $u^K\in V^K$ be a solution of (\ref{weak-form-finite}) where $V^K$ is defined as in {\color{black}the} previous lemma. Then there exists a constant $C>0$ such that
\BEA
\Vert u-u^K\Vert_{L^2(M)}\leq C\left(\frac{2}{1+\lambda_K}\right)\Vert u\Vert_{H^2(M)}.\notag
\EEA
\end{lem}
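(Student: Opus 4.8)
The plan is to apply the standard Aubin--Nitsche duality argument, exploiting the previous lemma's $H^1$-bound on $u-u^K$ and the elliptic regularity estimate quoted just above (Chapter 5, Proposition 1.6 of \cite{taylor2013partial}). First I would introduce the auxiliary dual problem: given the error $e := u - u^K \in L^2(M)$ as the source, let $w \in H^1(M)$ be the unique weak solution of the adjoint problem
\BEA
B(v,w) = \langle e, v\rangle_{L^2(M)}, \quad \forall v \in V. \notag
\EEA
Since the bilinear form $B$ arising from \eqref{eq1} is symmetric (the principal part $\langle \kappa \textup{grad}_g u, \textup{grad}_g v\rangle$ and the zeroth-order term $\langle cu, v\rangle$ are both symmetric), the adjoint problem coincides with the original weak form, so existence and uniqueness again follow from Lax--Milgram, and the regularity theorem gives $w \in H^2(M)$ with $\Vert w\Vert_{H^2(M)} \leq C \Vert e\Vert_{L^2(M)}$.

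Next I would set $v = e = u - u^K$ in the dual problem to get $\Vert e\Vert_{L^2(M)}^2 = B(u-u^K, w)$. The key is Galerkin orthogonality: since $u^K$ solves \eqref{weak-form-finite} and $u$ solves \eqref{weak-form}, we have $B(u - u^K, v) = 0$ for all $v \in V^K$. Hence for any $w^K \in V^K$,
\BEA
\Vert e\Vert_{L^2(M)}^2 = B(u-u^K, w - w^K) \leq C_B \Vert u - u^K\Vert_{H^1(M)} \Vert w - w^K\Vert_{H^1(M)}, \notag
\EEA
using continuity of $B$ on $H^1(M) \times H^1(M)$. I would then choose $w^K$ to be the truncated spectral projection of $w$ onto $V^K$ and invoke the spectral interpolation estimate \eqref{spectral-interpolation} from the previous lemma (applied to $w$ in place of $u$) to bound $\Vert w - w^K\Vert_{H^1(M)} \leq C (2/(1+\lambda_K))^{1/2} \Vert w\Vert_{H^2(M)}$.

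Finally I would chain these estimates together: combining the $H^1$-bound from the previous lemma, $\Vert u - u^K\Vert_{H^1(M)} \leq C(2/(1+\lambda_K))^{1/2}\Vert u\Vert_{H^2(M)}$, with the dual bound on $\Vert w - w^K\Vert_{H^1(M)}$ and the regularity estimate $\Vert w\Vert_{H^2(M)} \leq C\Vert e\Vert_{L^2(M)}$, I obtain
\BEA
\Vert e\Vert_{L^2(M)}^2 \leq C \left(\frac{2}{1+\lambda_K}\right) \Vert u\Vert_{H^2(M)} \, \Vert e\Vert_{L^2(M)}, \notag
\EEA
and dividing through by $\Vert e\Vert_{L^2(M)}$ yields the claimed bound with the factor $2/(1+\lambda_K)$ rather than its square root, which is exactly the extra order gained by the duality trick. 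I do not expect serious obstacles here; the one point requiring a little care is verifying that $B$ is genuinely self-adjoint so the dual problem inherits the same regularity theory, and making sure the constant $C$ absorbs the continuity constant of $B$, the regularity constant, and the Cea constant uniformly in $K$.
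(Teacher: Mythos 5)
Your proposal is correct and follows essentially the same route as the paper: the Aubin--Nitsche duality argument with the adjoint problem, Galerkin orthogonality, continuity of $B$, the spectral interpolation estimate applied to the dual solution $w$, and the elliptic regularity bound $\Vert w\Vert_{H^2(M)}\leq C\Vert u-u^K\Vert_{L^2(M)}$. The only cosmetic difference is that the paper writes the dual problem as $B(w,v)=\langle u-u^K,v\rangle_{L^2(M)}$ without remarking on symmetry, whereas you explicitly justify that the symmetric form makes the adjoint problem coincide with the original one.
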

\begin{proof}
By the regularity theorem (Chapter 5, Proposition 1.6 in \cite{taylor2013partial}), the adjoint problem
\BEA
B(w,v)=\langle u-u^K,v\rangle_{L^2(M)},\quad\forall v\in V,
\label{dual}
\EEA
has a unique solution $w\in H^2(M)$ with
\BEA
\Vert w\Vert_{H^2(M)}\leq C\Vert u-u^K\Vert_{L^2(M)}.
\label{dual-reg}
\EEA
for some generic constant $C>0$.
Choose $v=u-u^K$ in (\ref{dual}). It follows that
$$
\begin{aligned}
\Vert u-u^K\Vert_{L^2(M)}^2=\left\langle u-u^K,u-u^K\right\rangle_{L^2(M)}&=B(w,u-u^K)=B(w-v^K,u-u^K)\\
&\leq C \Vert u-u^K\Vert_{H^1(M)}\Vert w-v^K\Vert_{H^1(M)},\quad\forall v^K\in V^K,
\end{aligned}
$$
where we have used the continuity of the bilinear form $B$ and Galerkin orthogonality $B(u-u^K,v^K)=0,\forall v^K\in V^K$. If we choose $v^K=\sum_{k=1}^K\langle w,\varphi_k\rangle\varphi_k$, then the spectral interpolation error proved in the previous lemma shows that
$$
\Vert w-v^K\Vert_{H^1(M)}\leq C\left(\frac{2}{1+\lambda_K}\right)^{1/2}\Vert w\Vert_{H^2(M)}.
$$
Consequently,
$$
\begin{aligned}
\left\Vert u-u^K\right\Vert_{L^2(M)}^2&\leq C\left\Vert u-u^K\right\Vert_{H^1(M)}\left\Vert w-v^K\right\Vert_{H^1(M)}\\
& \leq C \left(\frac{2}{1+\lambda_K}\right)^{1/2}\Vert u\Vert_{H^2(M)} \left(\frac{2}{1+\lambda_K}\right)^{1/2}\Vert w\Vert_{H^2(M)}\\
 &\leq C \left(\frac{2}{1+\lambda_K}\right) \Vert u\Vert_{H^2}\Vert u-u^K\Vert_{L^2(M)}
\end{aligned}
$$
where we have used (\ref{dual-reg})  in the last inequality to complete the proof and abused the notation $C>0$ for different constants in each of the inequality above.
\end{proof}
Next, we consider the second term on right hand side in (\ref{error_estimate}), which is sometimes called the variance error in statistical learning literature. From the previous notations, we recall that  $u^K$ and $\tilde{u}^K$ could be written as $u^K=\sum_{k=1}^Ku_k\varphi_k$ and  $\tilde{u}^K=\sum_{k=1}^K\tilde{u}_k\tilde{\varphi}_k$, where $\tilde{\varphi}_k:=I_{\phi_s}\tilde{\boldsymbol{\varphi}}_k$, such that,
\BEA
\Vert u^K-\tilde{u}^{K}\Vert_{L^2(M)}\leq \sum_{k=1}^K|u_k|\Vert \varphi_k-\tilde{\varphi}_k\Vert_{L^2(M)}+\sum_{k=1}^K\Vert \tilde{\varphi}_k\Vert_{L^2(M)}|u_k-\tilde{u}_k|.
\label{error_estimate_2}
\EEA

To estimate the first term in right hand side of (\ref{error_estimate_2}), we need the following results, which we quote from \cite{harlim2022rbf} for reader's convenience.
\begin{lem}(Lemma 4.1 of \cite{harlim2022rbf})
\label{RKHS L2 Convergence}
Assume that the nodes in the data set $X=\{\mathbf{x}_i\}_{i=1}^N$ are uniformly i.i.d. samples of $M$. Let $\phi_s$ be a kernel whose RKHS norm is norm equivalent to a Sobolev space $H^\alpha(\mathbb{R}^n)$ of order $\alpha > {n/2}$. Then with probability $1 - {\frac{1}{N}}$, we have
$$
\| I_{\phi_s} \mathbf{f} - f\|_{L^2(M)} = {O\left( N^{\frac{-2\alpha + (n-d)}{2d}} \right)},
$$
as $N\to\infty$, for all $f\in C^\infty(M)$.
\end{lem}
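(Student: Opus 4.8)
The plan is to establish this as a standard scattered-data interpolation estimate, built from three ingredients: the variational (minimal-norm) characterization of RBF interpolation in its native space, a trace estimate that exchanges ambient smoothness for intrinsic smoothness on $M$, and a sampling (zeros) inequality on the manifold driven by the fill distance of the point cloud.

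First I would introduce the error function $e := I_{\phi_s}\mathbf{f} - f$, noting that $e$ vanishes on $X$ because $I_{\phi_s}\mathbf{f}$ interpolates $f$ at the nodes. Let $\mathcal{N}_{\phi_s}$ be the native space (RKHS) of $\phi_s$, whose norm is, by hypothesis, equivalent to $\|\cdot\|_{H^\alpha(\mathbb{R}^n)}$. Since $M$ is smooth and compact, one can extend $f\in C^\infty(M)$ to a compactly supported $F\in H^\alpha(\mathbb{R}^n)$, so that $f$ has finite native-space norm. The minimal-norm property of the interpolant then yields $\|I_{\phi_s}\mathbf{f}\|_{\mathcal{N}_{\phi_s}}\leq \|F\|_{\mathcal{N}_{\phi_s}}$, and hence by norm equivalence $\|e\|_{H^\alpha(\mathbb{R}^n)}\leq C\|F\|_{H^\alpha(\mathbb{R}^n)}$, a bound independent of $N$.

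Next, I would apply the trace theorem for the $d$-dimensional submanifold $M\subset\mathbb{R}^n$: restriction maps $H^\alpha(\mathbb{R}^n)$ boundedly into $H^{\tau}(M)$ with $\tau:=\alpha-(n-d)/2$, proved by localizing with charts and partitions of unity. The assumption $\alpha>n/2$ is precisely what makes $\tau>d/2$, so that point evaluation on $H^\tau(M)$ is bounded and the sampling inequality below applies. Combining with the previous step gives $\|e\|_{H^\tau(M)}\leq C\|e\|_{H^\alpha(\mathbb{R}^n)}\leq C'$. I would then invoke the sampling inequality on $M$ (the manifold analogue of the Narcowich--Ward--Wendland estimate): for $u\in H^\tau(M)$ with $\tau>d/2$ and $u|_X=0$,
\[
\|u\|_{L^2(M)}\leq C\,h_{X,M}^{\tau}\,\|u\|_{H^\tau(M)},\qquad h_{X,M}:=\sup_{x\in M}\min_{1\leq i\leq N}\|x-\mathbf{x}_i\|.
\]
Applied to $u=e$, this produces $\|I_{\phi_s}\mathbf{f}-f\|_{L^2(M)}\leq C\,h_{X,M}^{\alpha-(n-d)/2}$. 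Finally, I would convert fill distance into sample size: a covering argument shows that for uniform i.i.d.\ samples on the $d$-dimensional $M$, with probability at least $1-1/N$ one has $h_{X,M}=O(N^{-1/d})$, and substituting gives exactly the exponent $(-2\alpha+(n-d))/(2d)$.

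\textbf{Main obstacle.} The two delicate steps are the trace estimate and the fill-distance bound. For the trace step one must verify that restriction to a curved $d$-dimensional submanifold costs exactly $(n-d)/2$ derivatives and that the resulting intrinsic order $\tau$ is compatible with the RKHS setting, which is where $\alpha>n/2$ is used sharply. For the probabilistic step, the honest covering estimate carries a $(\log N)^{1/d}$ factor that must be absorbed into the implied constant and matched to the stated confidence $1-1/N$. The sampling inequality itself is the technical workhorse; establishing it on a manifold rather than on a Euclidean domain via local charts, and checking that its hypotheses hold uniformly, is where most of the genuine work resides.
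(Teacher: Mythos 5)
The paper does not prove this lemma itself---it imports it verbatim from Lemma 4.1 of \cite{harlim2022rbf}---but your argument reproduces exactly the strategy of that cited proof (also hinted at in the paper's subsequent remark): the minimal-norm property in the native space norm-equivalent to $H^\alpha(\mathbb{R}^n)$, the trace of $H^\alpha(\mathbb{R}^n)$ onto $H^{\alpha-(n-d)/2}(M)$ (which is where $\alpha>n/2$ is needed so that the intrinsic order exceeds $d/2$), the sampling inequality $\|u\|_{L^2(M)}\leq C h_{X,M}^{\alpha-(n-d)/2}\|u\|_{H^{\alpha-(n-d)/2}(M)}$ for functions vanishing on $X$, and a high-probability fill-distance bound $h_{X,M}=O\bigl((\log N/N)^{1/d}\bigr)$ whose logarithmic factor is absorbed into the implied constant. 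Your proposal is correct and essentially identical in structure to the proof the paper relies on.
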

\begin{rem}
If X is sampled from a smooth density function $q(x)$ with $q(x)\geq q_{min}>0,\forall x\in M$,  following the same proof in Lemma 4.1 of \cite{harlim2022rbf},
one can deduce the following interpolation error
$$
\Vert I_{\phi_s} \mathbf{f} - f\Vert_{L^2(\mu)}={O\left( \big(q_{min}N\big)^{\frac{-2\alpha + (n-d)}{2d}} \right)},
$$
as $N\to\infty$, with the sampling measure $d\mu=qd\textup{Vol}$. Here, the additional factor $q_{min}$ contribute to a larger error bound.

\comment{Indeed, one can follow the arguments in \cite{arcangeli2007extension} and \cite{fuselier2012scattered} to show that
$$
\Vert I_{\phi_s} \mathbf{f} - f\Vert_{L^2(\mu)}\leq Ch^{\alpha-(n-d)/2}_{X,M}\Vert  f\Vert_{H^{\alpha-(n-d)/2}(\mu)}
$$
where the fill distance $h_{X,M}:=\sup\limits_{x\in M}\min\limits_{x_i\in X}d_g(x,x_i)$ with $d_g:M\times M\rightarrow\mathbb{R}$ the geodesic distance. Then one can show that $h_{X,M}=O((q_{min}N)^{-1/d})$ with high probability by following the same proof in Lemma 4.1 of \cite{harlim2022rbf}.}
\end{rem}

Since the radial basis interpolator is known to be unstable (see Chapter~12 of \cite{Wendland2005Scat}), this motivates the following notion of weakly unstable interpolator assumption.

\begin{assumption}[Weakly Unstable Interpolator]\label{weaklyunstable}
Let $\phi_s$ be a kernel whose RKHS norm is norm equivalent to a Sobolev space $H^\alpha(\mathbb{R}^n)$ of order $\alpha > {n/2}$. We should clarify that this class of kernel ensures an interpolation map $I_{\phi_s}:\mathbb{R}^N \to C^{\alpha-\frac{n-d}{2}}(M)$ such that $I_{\phi_s}\mathbf{f} \in W^{\alpha-\frac{n-d}{2},\infty}(M)$ for any $\mathbf{f}\in \mathbb{R}^N$. We assume that $\alpha - \frac{n-d}{2} \geq 2$ and,
\[
\| I_{\phi_s}\mathbf{f} \|^2_{W^{2,\infty}(M)} =O\left(N^{\frac{1-\beta}{2}}\right),
\]
as $N\to\infty$, for some $0<\beta \leq 1$ and any $f\in C^\infty(M)$.
\end{assumption}

We should point out that while numerically we desire a stable interpolator (with $\beta=1)$, one can show that the Mat\'{e}rn kernel that is adopted in this theoretical study is unstable. In fact, the condition number of the matrix $\boldsymbol{\Phi}$ grows polynomially with respect to the size of the data, $N$. Motivated by this fact, this assumption allows for an instability with a polynomial growth rate as a function of $N$ that grows weaker than the Monte-Carlo error rate (with $\beta >0$) to avoid vacuous bounds in the following discussion.

The error of approximated eigenfunctions are considered in the following norm which is consistent with $L^2(M)$ as the number of data points $N$ approaches infinity:
\begin{defn}
Given two vectors $\mathbf{f},\mathbf{h}\in\mathbb{R}^N$, we define $\langle \mathbf{f},\mathbf{h}\rangle_{L^2(\mu_N)}:=\frac{1}{N}\mathbf{f}^\top\mathbf{h}$ and $\Vert f\Vert_{L^2(\mu_N)}:=\langle \mathbf{f},\mathbf{f}\rangle^{1/2}_{L^2(\mu_N)}$.
\end{defn}

\begin{lem}\label{lem_eig_conv} (Theorems 4.1 and 4.2 of \cite{harlim2022rbf}) Let $\phi_s$ be a kernel whose RKHS norm is norm equivalent to a Sobolev space  $H^\alpha(\mathbb{R}^n)$  of order $\alpha> n/2$. Let $\lambda_i$ denote the $i$-th eigenvalue of $\Delta_M$, enumerated $0\leq \lambda_1\leq\lambda_2\leq\cdots $. Then there exists an eigenvalue $\tilde{\lambda}_i$ of $\mathbf{G}^\top\mathbf{G}$ such that
$$
\left|\lambda_{i}-\tilde{\lambda}_{i}\right| = {\color{black}O\left(N^{-\frac{1}{2}}\right)}+O\left(N^{\frac{-2 \alpha+(n-d)}{2 d}}\right),
$$
with probability greater than  $1-\frac{12}{N}$, as $N\to\infty$, for some $0< \beta \leq 1$. Besides, for any normalized eigenvector $\tilde{\boldsymbol{\varphi}}$ of $\mathbf{G}^\top\mathbf{G}$ with $\tilde{\lambda}_i$, there is a normalized eigenfunction $\varphi$ of $\Delta_M$ with eigenvalue $\lambda_i$ such that
\BEA
\left\| \boldsymbol{\varphi}-\tilde{\boldsymbol{\varphi}}\right\|_{L^{2}\left(\mu_{N}\right)}={\color{black}O\left(N^{-\frac{1}{4}}\right)}+O\left(N^{\frac{-2 \alpha+(n-d)}{4d}}\right),\label{error_eigen}
\EEA
with probability higher than $1-\left(\frac{2 m^{2}+4 m+24}{N}\right)$, where $ \boldsymbol{\varphi}=(\varphi(\mathbf{x}_1),...,\varphi(\mathbf{x}_N))^\top$ and $m$ is the geometric multiplicity of eigenvalue $\lambda_i$, as $N\to\infty$.
\end{lem}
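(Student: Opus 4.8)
The plan is to treat this as a spectral-convergence statement for a variational (Galerkin) discretization and to split the total error into a deterministic \emph{bias} coming from the RBF interpolation together with a stochastic \emph{variance} coming from the Monte-Carlo quadrature. First I would introduce the continuous Dirichlet form $a(f,h):=\langle \textup{grad}_g f,\textup{grad}_g h\rangle_{L^2(\mu)}$, whose associated self-adjoint operator is the (weighted) Laplacian with spectrum $\{\lambda_i\}$, alongside the discrete form $a_N(\mathbf{f},\mathbf{h}):=\frac{1}{N}\mathbf{f}^\top\mathbf{G}^\top\mathbf{G}\,\mathbf{h}$, whose spectrum is $\{\tilde{\lambda}_i\}$. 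The core estimate is that for smooth $f$ the two forms are close. Writing $a_N(\mathbf{f},\mathbf{f})=\frac{1}{N}\sum_{\ell=1}^n\sum_{i}(\mathbf{G}_\ell\mathbf{f})_i^2$ and comparing $(\mathbf{G}_\ell\mathbf{f})_i$ with the exact tangential derivative $\mathcal{G}_\ell f(\mathbf{x}_i)$, the discrepancy is controlled by the interpolation error of the derivatives, giving the $O(N^{(-2\alpha+(n-d))/2d})$ bias term via the Sobolev-sampling estimate behind Lemma~\ref{RKHS L2 Convergence}; replacing $\frac{1}{N}\sum_i(\mathcal{G}_\ell f(\mathbf{x}_i))^2$ by $\int_M(\mathcal{G}_\ell f)^2\,d\mu$ is a Monte-Carlo average whose fluctuation is $O(N^{-1/2})$ by a Bernstein-type concentration inequality, which is also the source of the probability $1-\tfrac{12}{N}$ after a union bound. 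I denote this combined form error by $\eta:=O(N^{-1/2})+O(N^{(-2\alpha+(n-d))/2d})$.

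For the eigenvalues I would invoke the Courant-Fischer minimax characterization
\[
\lambda_i=\min_{\dim S=i}\ \max_{0\neq f\in S}\frac{a(f,f)}{\|f\|_{L^2(\mu)}^2},
\]
together with its discrete analogue using $a_N$ and $\|\cdot\|_{L^2(\mu_N)}$. Using the span of the leading restricted eigenfunctions $\{\boldsymbol{\varphi}_1,\ldots,\boldsymbol{\varphi}_i\}$ as a competitor subspace in the discrete problem (and conversely the leading discrete eigenvectors in the continuous problem) sandwiches $\tilde{\lambda}_i$ between $\lambda_i$ perturbed by the form error $\eta$ and by the mismatch between the $L^2(\mu)$ and $L^2(\mu_N)$ norms, the latter again being $O(\eta)$ by concentration. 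This yields $|\lambda_i-\tilde{\lambda}_i|=O(\eta)$, the stated eigenvalue bound.

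For the eigenvector estimate I would fix a normalized discrete eigenvector $\tilde{\boldsymbol{\varphi}}$ at $\tilde{\lambda}_i$ and split it, in the restricted continuous eigenbasis, as $\tilde{\boldsymbol{\varphi}}=\tilde{\boldsymbol{\varphi}}_{\parallel}+\tilde{\boldsymbol{\varphi}}_{\perp}$, where $\tilde{\boldsymbol{\varphi}}_{\parallel}$ is its projection onto the restricted $\lambda_i$-eigenspace of dimension $m$. Evaluating the continuous form on $\tilde{\boldsymbol{\varphi}}$ and using $a(\tilde{\boldsymbol{\varphi}},\tilde{\boldsymbol{\varphi}})=a_N(\tilde{\boldsymbol{\varphi}},\tilde{\boldsymbol{\varphi}})+O(\eta)=\tilde{\lambda}_i+O(\eta)$ with the lower bound $a(\tilde{\boldsymbol{\varphi}},\tilde{\boldsymbol{\varphi}})\geq \lambda_i\|\tilde{\boldsymbol{\varphi}}_{\parallel}\|^2+\lambda_{i+m}\|\tilde{\boldsymbol{\varphi}}_{\perp}\|^2$ and the spectral gap $\lambda_{i+m}-\lambda_i>0$, one obtains $\|\tilde{\boldsymbol{\varphi}}_{\perp}\|^2_{L^2(\mu_N)}\lesssim \eta/(\lambda_{i+m}-\lambda_i)$; taking the square root produces the $O(N^{-1/4})+O(N^{(-2\alpha+(n-d))/4d})$ rate, and the multiplicity $m$ enters both the $m$-dimensional projection and the union bound over the associated Monte-Carlo events, explaining the failure probability $\tfrac{2m^2+4m+24}{N}$. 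The hardest part is propagating the interpolation error through the \emph{differentiation} operators $\mathbf{G}_\ell=\mathbf{B}_\ell\boldsymbol{\Phi}^{-1}$: differentiating an interpolant amplifies error and $\boldsymbol{\Phi}$ is ill-conditioned, so one must quantify the instability --- exactly the role of the Weakly Unstable Interpolator Assumption~\ref{weaklyunstable} --- and confirm that the amplified bias still decays strictly faster than the Monte-Carlo rate. A secondary difficulty is that the whole comparison straddles two inner products, the discrete $\langle\cdot,\cdot\rangle_{L^2(\mu_N)}$ on $\mathbb{R}^N$ and the continuous $\langle\cdot,\cdot\rangle_{L^2(\mu)}$ on functions, so each step requires a bridge through the restriction and interpolation maps and a concentration estimate that holds uniformly over the relevant finite-dimensional subspaces, which is what forces the explicit multiplicity factors in the probability bounds.
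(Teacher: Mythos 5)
This lemma is not proved in the paper at all: it is quoted verbatim as Theorems~4.1 and 4.2 of \cite{harlim2022rbf}, so there is no internal proof to compare your argument against. That said, your sketch is a credible reconstruction of the standard route taken in that reference: a bias--variance splitting of the Dirichlet-form error into an RBF interpolation term of order $N^{(-2\alpha+(n-d))/2d}$ and a Monte-Carlo/concentration term of order $N^{-1/2}$, Courant--Fischer min-max for the eigenvalues, and a spectral-gap (Davis--Kahan-type) argument whose square root yields the halved eigenvector rate; the surrounding text of the present paper (which reuses Hoeffding bounds and Lemma~\ref{RKHS L2 Convergence} from the same reference in exactly this pattern) is consistent with that being the actual mechanism. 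Two small cautions: first, the lemma as stated assumes only the Sobolev norm-equivalence of the kernel, not Assumption~\ref{weaklyunstable}, so you should not lean on the weakly-unstable-interpolator hypothesis inside this particular proof (the stray ``for some $0<\beta\leq 1$'' in the statement with no $\beta$ in the displayed rates suggests the authors edited $\beta$ out of the bounds here); second, your eigenvector argument only bounds the component $\tilde{\boldsymbol{\varphi}}_{\perp}$ orthogonal to the restricted eigenspace --- to produce the claimed normalized eigenfunction $\varphi$ you must also normalize the parallel component and account for the fact that the restricted eigenvectors $\boldsymbol{\varphi}_j$ are only approximately orthonormal in $L^2(\mu_N)$, which is precisely where the $m$-dependent union bound in the failure probability comes from.
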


The first term in \eqref{error_estimate_2} can be bounded above as follows,
\BEA
\left\| \varphi-\tilde{\varphi}\right\|_{L^2(M)}=\left\| \varphi-I_{\phi_s}\tilde{\boldsymbol{\varphi}}\right\|_{L^2(M)} \leq \left\| \varphi-I_{\phi_s}\boldsymbol{\varphi}\right\|_{L^2(M)} + \left\|I_{\phi_s}\boldsymbol{\varphi}-I_{\phi_s}\tilde{\boldsymbol{\varphi}}\right\|_{L^2(M)}, \label{errorbound3}
\EEA
where we can immediately use the interpolation error rate from Lemma~\ref{RKHS L2 Convergence} to bound the first term on the right-hand-side.

The following result gives a concentration bound for the second term,
\begin{lem}\label{lem:perturb}
Under the same assumptions as in Lemma~\ref{lem_eig_conv}, with probability higher than $1-\frac{1}{N}$,
\[
\Vert I_{\phi_s}\boldsymbol{\varphi}-I_{\phi_s}\tilde{\boldsymbol{\varphi}}\Vert_{L^2(M)} = O(N^{-\frac{\beta}{4}}) + O\left(N^{\frac{-2 \alpha+(n-d)}{4d}}\right),
\]
as $N\to\infty$ for some $0< \beta\leq 1$.
\end{lem}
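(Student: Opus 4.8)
The plan is to exploit the linearity of the interpolation operator together with the fact that an RBF interpolant reproduces its nodal data exactly. Writing $\mathbf{w} := \boldsymbol{\varphi} - \tilde{\boldsymbol{\varphi}} \in \mathbb{R}^N$, linearity of $I_{\phi_s}$ gives $I_{\phi_s}\boldsymbol{\varphi} - I_{\phi_s}\tilde{\boldsymbol{\varphi}} = I_{\phi_s}\mathbf{w}$, and since the interpolant matches the data, $(I_{\phi_s}\mathbf{w})(\mathbf{x}_i) = w_i$ for every $i$. Consequently the empirical norm of the interpolant coincides with the discrete norm of its nodal values, $\Vert I_{\phi_s}\mathbf{w}\Vert_{L^2(\mu_N)} = \Vert \mathbf{w}\Vert_{L^2(\mu_N)} = \Vert \boldsymbol{\varphi}-\tilde{\boldsymbol{\varphi}}\Vert_{L^2(\mu_N)}$, which is already controlled by the eigenvector bound \eqref{error_eigen} in Lemma~\ref{lem_eig_conv}. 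Thus the only remaining task is to pass from the empirical norm $\Vert\cdot\Vert_{L^2(\mu_N)}$ to the continuous norm $\Vert\cdot\Vert_{L^2(M)}$ for this particular, data-dependent, interpolant.

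First I would split the continuous norm into a Monte-Carlo sum plus a discrepancy,
\[
\Vert I_{\phi_s}\mathbf{w}\Vert_{L^2(M)}^2 = \Vert I_{\phi_s}\mathbf{w}\Vert_{L^2(\mu_N)}^2 + \Big( \int_M (I_{\phi_s}\mathbf{w})^2\, d\textup{Vol} - \tfrac{1}{N}\sum_{i=1}^N (I_{\phi_s}\mathbf{w})^2(\mathbf{x}_i)\Big).
\]
The first term is bounded by squaring the rate from Lemma~\ref{lem_eig_conv}, giving $O(N^{-1/2}) + O(N^{(-2\alpha+(n-d))/(2d)})$. For the discrepancy term I would invoke a concentration inequality for the Monte-Carlo integration error of the nonnegative integrand $\psi := (I_{\phi_s}\mathbf{w})^2$. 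This is where Assumption~\ref{weaklyunstable} enters: it bounds $\Vert I_{\phi_s}\mathbf{w}\Vert_{W^{2,\infty}(M)}^2 = O(N^{(1-\beta)/2})$, so that $\Vert\psi\Vert_{L^\infty(M)} = O(N^{(1-\beta)/2})$ and $\psi$ is equicontinuous with a quantitatively controlled modulus. A Hoeffding/Bernstein-type bound then yields a discrepancy of order $\Vert\psi\Vert_{L^\infty(M)}\, N^{-1/2} = O(N^{-\beta/2})$, up to logarithmic factors, with probability at least $1-\tfrac{1}{N}$.

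Combining the two contributions gives $\Vert I_{\phi_s}\mathbf{w}\Vert_{L^2(M)}^2 = O(N^{-1/2}) + O(N^{(-2\alpha+(n-d))/(2d)}) + O(N^{-\beta/2})$; taking square roots and absorbing $N^{-1/4}$ into $N^{-\beta/4}$ (legitimate since $0<\beta\leq 1$) produces the stated bound. The main obstacle is the discrepancy step: the integrand $\psi=(I_{\phi_s}\mathbf{w})^2$ is assembled from the very sample $X$ used in the Monte-Carlo average, so a Hoeffding bound for a \emph{fixed} function does not directly apply. Resolving this requires a concentration statement made uniform over the class $\{h^2 : \Vert h\Vert_{W^{2,\infty}(M)} \leq C N^{(1-\beta)/4}\}$, which is compact in $C^0(M)$ and whose metric entropy contributes only logarithmic factors, so that the bound holds simultaneously for the random $\psi$. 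A secondary subtlety is that Assumption~\ref{weaklyunstable} is phrased for interpolants of smooth functions, hence applies cleanly to $I_{\phi_s}\boldsymbol{\varphi}$ but must be carried over to $I_{\phi_s}\tilde{\boldsymbol{\varphi}}$, the interpolant of a discrete eigenvector; I would handle this by the triangle inequality $\Vert I_{\phi_s}\mathbf{w}\Vert_{W^{2,\infty}(M)} \leq \Vert I_{\phi_s}\boldsymbol{\varphi}\Vert_{W^{2,\infty}(M)} + \Vert I_{\phi_s}\tilde{\boldsymbol{\varphi}}\Vert_{W^{2,\infty}(M)}$ and controlling the second summand through the same instability rate applied to the nearby eigenvector.
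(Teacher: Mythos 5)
Your proposal follows essentially the same route as the paper's proof: the identity $\Vert I_{\phi_s}\mathbf{w}\Vert_{L^2(\mu_N)} = \Vert \boldsymbol{\varphi}-\tilde{\boldsymbol{\varphi}}\Vert_{L^2(\mu_N)}$ from the interpolation condition, a Hoeffding bound on the discrepancy between $\Vert I_{\phi_s}\mathbf{w}\Vert^2_{L^2(M)}$ and the empirical norm using the $L^\infty$ control $O(N^{(1-\beta)/2})$ from Assumption~\ref{weaklyunstable}, and then the eigenvector bound \eqref{error_eigen} with $N^{-1/4}$ absorbed into $N^{-\beta/4}$. The two subtleties you flag --- that the integrand $(I_{\phi_s}\mathbf{w})^2$ depends on the same sample over which the Monte-Carlo average is taken, and that Assumption~\ref{weaklyunstable} must be extended from interpolants of smooth functions to $I_{\phi_s}\tilde{\boldsymbol{\varphi}}$ --- are real but are likewise left unaddressed in the paper's own argument, so your treatment is not weaker than the published one.
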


\begin{proof}
Under the Assumption~\ref{weaklyunstable}, we have
\[
\| I_{\phi_s}(\boldsymbol{\varphi}-\tilde{\boldsymbol{\varphi}})\|^2_{L^\infty(M)} \leq C N^{\frac{1-\beta}{2}},
\]
for some $C>0$ that is independent of $N$ and some $0<\beta\leq 1$. By Hoeffding's concentration inequality, for any $\epsilon>0$,
\[
\mathbb{P}_X \left( \left| \Vert I_{\phi_s}(\boldsymbol{\varphi}-\tilde{\boldsymbol{\varphi}})\Vert _{L^2(M)}^2  - \| \boldsymbol{\varphi}-\tilde{\boldsymbol{\varphi}}\|^2_{L^2(\mu_N)}\right| >\epsilon \right)  \leq 2 \textup{exp}\left( \frac{-2\epsilon^2N}{CN^{1-\beta}} \right),
\]
where we have used the interpolation condition, $\left(I_{\phi_s}(\boldsymbol{\varphi}-\tilde{\boldsymbol{\varphi}})\right) |_X = \boldsymbol{\varphi}-\tilde{\boldsymbol{\varphi}}$. Balancing $\frac{1}{N}$ with the probability on the right-hand-side and solving for the $\epsilon$, we attain with probability higher than $1-\frac{1}{N}$,
\[
\Vert I_{\phi_s}(\boldsymbol{\varphi}-\tilde{\boldsymbol{\varphi}})\Vert_{L^2(M)}^2  \leq   \| \boldsymbol{\varphi}-\tilde{\boldsymbol{\varphi}}\|^2_{L^2(\mu_N)} + O(N^{-\frac{\beta}{2}}),
\]
where the big-oh involves a $\log(N)$ factor that we ignored. Together with \eqref{error_eigen}, the proof is complete.
\end{proof}

Combining Lemmas~\ref{RKHS L2  Convergence} and \ref{lem:perturb}, we can summarize an upper bound for \eqref{errorbound3} as follows.
\begin{lem}\label{lem_stability}Let the assumptions in Lemmas~\ref{RKHS L2  Convergence} and \ref{lem_eig_conv} be valid. Suppose that data points $\mathbf{x}_i \in X$ are i.i.d. samples drawn uniformly from $M$. Then, with probability higher than $1-\left(\frac{2 m^{2}+4 m+25}{N}\right)$,
\[
\left\| \varphi-\tilde{\varphi}\right\|_{L^2(M)} = O\left(N^{-\frac{\beta}{4}}\right)+O\left(N^{\frac{-2 \alpha+(n-d)}{4d}}\right),
\]
as $N\to\infty$, for some $0<\beta\leq 1$.
\end{lem}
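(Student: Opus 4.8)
The plan is to assemble the statement directly from the triangle-inequality split already recorded in \eqref{errorbound3}, since both of its summands have been estimated in the preceding lemmas. Recalling that $\tilde{\varphi} = I_{\phi_s}\tilde{\boldsymbol{\varphi}}$, that inequality reads
\[
\left\| \varphi-\tilde{\varphi}\right\|_{L^2(M)} \leq \left\| \varphi-I_{\phi_s}\boldsymbol{\varphi}\right\|_{L^2(M)} + \left\|I_{\phi_s}\boldsymbol{\varphi}-I_{\phi_s}\tilde{\boldsymbol{\varphi}}\right\|_{L^2(M)}.
\]
I would bound the first summand, a pure interpolation error, by Lemma~\ref{RKHS L2 Convergence}, giving $O(N^{(-2\alpha+(n-d))/(2d)})$, and the second summand by Lemma~\ref{lem:perturb}, giving $O(N^{-\beta/4}) + O(N^{(-2\alpha+(n-d))/(4d)})$. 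Adding the two leaves a crude bound with three error terms.

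The one substantive observation is that the interpolation rate is dominated by the perturbation rate and can be discarded. Because the kernel has $\alpha > n/2 \geq (n-d)/2$, the exponent $-2\alpha+(n-d)$ is strictly negative, so $N^{(-2\alpha+(n-d))/(2d)}$ tends to zero strictly faster than $N^{(-2\alpha+(n-d))/(4d)}$ as $N\to\infty$; the first term is therefore absorbed into $O(N^{(-2\alpha+(n-d))/(4d)})$. This leaves precisely the two advertised terms $O(N^{-\beta/4})$ and $O(N^{(-2\alpha+(n-d))/(4d)})$, with the parameter $\beta$ inherited from the weakly-unstable-interpolator Assumption~\ref{weaklyunstable}.

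For the success probability I would apply a union bound over the events on which the two component estimates hold. The bound for the first summand holds on the Monte-Carlo event of Lemma~\ref{RKHS L2 Convergence} (failure probability $O(1/N)$), while the bound for the second summand requires both the Hoeffding concentration event in the proof of Lemma~\ref{lem:perturb} (failure probability $O(1/N)$) and the eigenvector-convergence event \eqref{error_eigen} of Lemma~\ref{lem_eig_conv} (failure probability $(2m^2+4m+24)/N$). The dominant $2m^2+4m$ contribution comes from the geometric multiplicity $m$ of the eigenvalue, and accounting for the remaining $O(1/N)$ concentration failures produces the additive constant in the stated $1-(2m^2+4m+25)/N$.

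I do not anticipate a real obstacle, because all of the analysis — the instability control via Hoeffding and the spectral convergence estimate — is already encapsulated in Lemmas~\ref{lem:perturb} and \ref{lem_eig_conv}. The only places needing care are bookkeeping: confirming the exponent comparison so that the interpolation term is genuinely absorbed rather than reported separately, and tracking the probability budgets so that the shared eigenvector event is counted once, which is what keeps the additive constant at the stated value.
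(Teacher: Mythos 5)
Your proposal is correct and follows exactly the paper's route: the paper states Lemma~\ref{lem_stability} as the immediate combination of the split in \eqref{errorbound3}, the interpolation bound of Lemma~\ref{RKHS L2 Convergence} for the first summand, and Lemma~\ref{lem:perturb} for the second, with the faster-decaying interpolation rate $O(N^{\frac{-2\alpha+(n-d)}{2d}})$ absorbed into $O(N^{\frac{-2\alpha+(n-d)}{4d}})$ just as you argue. Your probability bookkeeping also matches the paper's stated constant.
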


Now, it only remains to estimate the second term in {\color{black}the} right hand side of (\ref{error_estimate_2}). Since the RKHS ${\cal V}$ is embedded in $L^2(M)$, it is clear that $\tilde{\varphi}:= I_{\phi_s}\tilde{\boldsymbol{\varphi}} \in {\cal V}$ is also in $L^2(M)$.
Now, we need to characterize the error between the coefficients $\textup{U}=(u_1,...,u_k)^\top$ and  $\tilde{\mathbf{U}}=(\tilde{u}_1,\ldots,\tilde{u}_K)^\top$ that solve the $K\times K$ linear systems $\textup{A}\textup{U} = \textup{b}$ and $\mathbf{\tilde{A}} \mathbf{\tilde{U}} \mathbf{= \tilde{b}}$, respectively. We recall again that $\textup{A}$ and $\textup{b}$ are defined in \eqref{Ab}, whereas $\mathbf{\tilde{A}}$ and $\mathbf{\tilde{b}}$ are defined in \eqref{Atilde2}-\eqref{btilde2}. {\color{black} The analysis below is to account for the effect of discretization errors (in parallel to Strang's Lemma in FEM literature) induced by the Monte-Carlo discretization and approximate basis functions.} We will proceed by employing the following lemma that is a well known result from perturbation theory (written in our notation).

\comment{
By definition, $\{\hat{u}^k\}$ is the solution of $AU=b$ with
\BEA
A_{jk} &:=& \left(c \varphi_k+\kappa\Delta_M \varphi_k-\langle  \textup{grad}_g  \kappa,  \textup{grad}_g  \varphi_k\rangle_g,\varphi_j\right)_{L^2}\label{matrix-A},\\
b_j &:=& (f,\varphi_j)_{L^2},
\EEA
and $\tilde{\hat{u}}^k$ is the solution of linear systems $\tilde{\mathbf{A}}\tilde{U}=\tilde{\mathbf{b}}$ in (\ref{Atilde2})-(\ref{btilde2})    where
\BEA
\mathbf{\tilde{A}}_{jk}&:=& \frac{1}{N}\sum_{i=1}^N\left((c+\kappa(\textbf{x}_i)\Delta_X^{RBF})\tilde{\boldsymbol{\varphi}}_k(\textbf{x}_i)-\sum_{\ell=1}^n(\mathbf{G}_\ell \boldsymbol{\kappa})_i(\mathbf{G}_\ell\tilde{\boldsymbol{\varphi}}_k)_i\right)\tilde{\boldsymbol{\varphi}}_j(\textbf{x}_i) \\
\mathbf{\tilde{b}}_j &:=&\frac{1}{N} \mathbf{f}^\top \boldsymbol{\tilde{\varphi}}_j.
\EEA
Then
$$
\sum_{k=1}^K\Vert \tilde{\varphi_k}\Vert|\hat{u}^k-\hat{\tilde{u}}^k|\leq (\sum_{k=1}^K\Vert \tilde{\varphi_k}\Vert|^2)^{1/2}\Vert U-\tilde{U}\Vert_2
$$
}

\begin{lem}\label{perturbation}(Theorem 2.7.2 of \cite{golub1996matrix}) Let $\textup{b}$ be a nonzero vector in $\mathbb{R}^K$ and $\textup{A}\in\mathbb{R}^{K\times K}$ be a matrix with full rank. Suppose $\textup{U},\tilde{\mathbf{U}} \in \mathbb{R}^K$ satisfy $\textup{A} \textup{U} = \textup{b}$ and $\mathbf{\tilde{A}} \mathbf{\tilde{U}} \mathbf{= \tilde{b}}$. Denote the condition number of $\textup{A}$ by $\textup{cond}(\textup{A})$. If
$$
\varepsilon:=\max\left\{\frac{\Vert \textup{A}-\tilde{\mathbf{A}}\Vert_2}{\Vert \textup{A}\Vert_2}, \frac{\Vert \textup{b}-\tilde{\mathbf{b}}\Vert_2}{\Vert \textup{b}\Vert_2}   \right\}<\frac{1}{\textup{cond}(\textup{A})},
$$
then
$$
\frac{\Vert \textup{U}-\tilde{\mathbf{U}}\Vert_2}{\Vert \textup{U}\Vert_2}\leq \frac{2\varepsilon}{1-\varepsilon \textup{cond}(\textup{A})} \textup{cond}(\textup{A}).
$$
\end{lem}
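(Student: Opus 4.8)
The plan is to treat this as the standard first-order perturbation argument for a linear system, keeping all cross terms exact so that no approximation is needed. First I would introduce the perturbation quantities $\Delta\textup{A} := \tilde{\mathbf{A}} - \textup{A}$, $\Delta\textup{b} := \tilde{\mathbf{b}} - \textup{b}$, and $\Delta\textup{U} := \tilde{\mathbf{U}} - \textup{U}$. The definition of $\varepsilon$ in the hypothesis immediately yields the submultiplicative bounds $\Vert \Delta\textup{A}\Vert_2 \leq \varepsilon\Vert \textup{A}\Vert_2$ and $\Vert \Delta\textup{b}\Vert_2 \leq \varepsilon\Vert \textup{b}\Vert_2$, which are the only two facts about the perturbations I will use.

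Second, I would establish that $\tilde{\mathbf{A}}=\textup{A}+\Delta\textup{A}$ is invertible, so that $\tilde{\mathbf{U}}$ is well-defined. Factoring $\tilde{\mathbf{A}} = \textup{A}(I + \textup{A}^{-1}\Delta\textup{A})$ and estimating $\Vert \textup{A}^{-1}\Delta\textup{A}\Vert_2 \leq \Vert\textup{A}^{-1}\Vert_2\Vert\Delta\textup{A}\Vert_2 \leq \varepsilon\,\textup{cond}(\textup{A})$, the hypothesis $\varepsilon\,\textup{cond}(\textup{A})<1$ lets me invoke the Neumann series: $I+\textup{A}^{-1}\Delta\textup{A}$ is invertible with $\Vert (I+\textup{A}^{-1}\Delta\textup{A})^{-1}\Vert_2 \leq (1-\varepsilon\,\textup{cond}(\textup{A}))^{-1}$. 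Composing with $\textup{A}^{-1}$ then gives the operator bound $\Vert \tilde{\mathbf{A}}^{-1}\Vert_2 \leq \Vert \textup{A}^{-1}\Vert_2 / (1-\varepsilon\,\textup{cond}(\textup{A}))$.

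Third, I would subtract $\textup{A}\textup{U}=\textup{b}$ from the expansion of $(\textup{A}+\Delta\textup{A})(\textup{U}+\Delta\textup{U}) = \textup{b}+\Delta\textup{b}$, which after cancellation gives the exact identity $(\textup{A}+\Delta\textup{A})\Delta\textup{U} = \Delta\textup{b} - \Delta\textup{A}\,\textup{U}$, hence $\Delta\textup{U} = \tilde{\mathbf{A}}^{-1}(\Delta\textup{b} - \Delta\textup{A}\,\textup{U})$. Taking norms and applying the bound from the second step gives $\Vert \Delta\textup{U}\Vert_2 \leq \Vert\textup{A}^{-1}\Vert_2\,(\Vert\Delta\textup{b}\Vert_2 + \Vert\Delta\textup{A}\Vert_2\Vert\textup{U}\Vert_2)/(1-\varepsilon\,\textup{cond}(\textup{A}))$. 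To finish, I would use $\Vert\Delta\textup{b}\Vert_2 \leq \varepsilon\Vert\textup{b}\Vert_2 = \varepsilon\Vert\textup{A}\textup{U}\Vert_2 \leq \varepsilon\Vert\textup{A}\Vert_2\Vert\textup{U}\Vert_2$ together with $\Vert\Delta\textup{A}\Vert_2\Vert\textup{U}\Vert_2 \leq \varepsilon\Vert\textup{A}\Vert_2\Vert\textup{U}\Vert_2$, so the two contributions combine into the factor $2\varepsilon\Vert\textup{A}\Vert_2\Vert\textup{U}\Vert_2$; dividing through by $\Vert\textup{U}\Vert_2$ and recognizing $\Vert\textup{A}^{-1}\Vert_2\Vert\textup{A}\Vert_2 = \textup{cond}(\textup{A})$ produces exactly the asserted bound.

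Since this is a classical textbook result (Theorem 2.7.2 of \cite{golub1996matrix}), I do not anticipate a genuine obstacle. The only step requiring care is the derivation of the perturbation identity: one must collect the cross term $\Delta\textup{A}\,\Delta\textup{U}$ on the left-hand side (absorbing it into $(\textup{A}+\Delta\textup{A})\Delta\textup{U}$) rather than discarding it as higher order, because retaining it exactly is precisely what allows the clean estimate through $\tilde{\mathbf{A}}^{-1}$ and, in turn, the appearance of the $(1-\varepsilon\,\textup{cond}(\textup{A}))^{-1}$ denominator.
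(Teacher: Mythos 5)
Your proof is correct and complete: the exact identity $(\textup{A}+\Delta\textup{A})\Delta\textup{U}=\Delta\textup{b}-\Delta\textup{A}\,\textup{U}$, the Neumann-series bound on $\tilde{\mathbf{A}}^{-1}$, and the estimate $\Vert\Delta\textup{b}\Vert_2\leq\varepsilon\Vert\textup{A}\Vert_2\Vert\textup{U}\Vert_2$ combine to give precisely the stated inequality. The paper itself offers no proof of this lemma --- it is quoted directly from Theorem 2.7.2 of \cite{golub1996matrix} --- and your argument is the standard textbook derivation of that result, so there is nothing to compare beyond noting that you have supplied the proof the paper omits.
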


\comment{\color{black}I am not sure the two Lemmas below are adequate and I suggest them to be deleted.

We will also need the following pointwise error estimates of the operators $ \textup{grad}_gI_{\phi_s}$, $\Delta^{RBF}$:
\begin{lem} (Lemma 4.3 of \cite{harlim2022rbf}) Let $f\in C^\infty(M)$ and let $u\in\mathfrak{X}(M)$. Then with probability higher than $1-\frac{1}{N}$,
 $$
 \left|\left\langle\operatorname{grad}_{g} f-\operatorname{grad}_{g} I_{\phi_{s}} f, u\right\rangle_{L^{2}(\mathfrak{X}(M))}\right| \leq\Vert f-I_{\phi_s}f\Vert_{L^2(M)} \left\|\operatorname{div}_{g}(u)\right\|_{L^{2}(M)}.
 $$
\end{lem}

\begin{lem} (Proposition 4.3 in  \cite{harlim2022rbf}) Given two vectors $\mathbf{f},\mathbf{h}\in\mathbb{R}^N$, define $\langle \mathbf{f},\mathbf{h}\rangle_{L^2(\mu_N)}:=\frac{1}{N}\mathbf{f}^\top\mathbf{h}$. Let $f\in C^\infty(M)$. Then  with probability greater than $1-\frac{6}{N}-2 \delta_{\operatorname{grad}_{g} I_{\phi_{s}} f} f$,
$$
\begin{aligned}\left\|\Delta_{M}^{(N)} \mathbf{f}-\mathbf{L}_{N} \mathbf{f}\right\|_{L^{2}\left(\mu_{N}\right)}^{2}=& O(1 / \sqrt{N})+\epsilon_{f}\left(\left\|\psi_{1}\right\|_{L^{2}(M)}+\left\|\psi_{2}\right\|_{L^{2}(M)}\right) \\ &+\epsilon_{\operatorname{grad}_{g} I_{\phi_{s}} f}\left(\left\|\operatorname{grad}_{g} \psi_{1}\right\|_{L^{2}(\mathfrak{X}(M))}+\left\|\operatorname{grad}_{g} \psi_{2}\right\|_{L^{2}(\mathfrak{X}(M))}\right) \end{aligned}
$$
where $\psi_1=\Delta_Mf$ and $\psi_{2}=\operatorname{div}_{g}\left(I_{\phi_{s}} \operatorname{grad}_{g} I_{\phi_{s}} f\right)$.

Or we can say
$$
\left\|\Delta_{M}^{(N)} \mathbf{f}-\mathbf{L}_{N} \mathbf{f}\right\|_{L^{2}\left(\mu_{N}\right)}^{2}=O(1 / \sqrt{N})+O\left(N^{\frac{-2 \alpha+(n-d)}{2 d}}\right)?
$$
\end{lem}
}

In order to employ this result, we will characterize the size of perturbations to satisfy the hypothesis above which guarantees that the perturbed matrix $\tilde{\mathbf{A}}$ to be non-singular. For the following error bounds, we define a restriction operator $R_Nf = \mathbf{f} = (f(\mathbf{x}_1),\ldots,f( \mathbf{x}_N))^\top$ for any continuous function in $C(\mathbb{R}^n)$.

\begin{lem}
Let $f, h\in C^\infty(M)$ and $X=\{\mathbf{x}_1,\ldots, \mathbf{x}_N\}\subset M$ be a set of i.i.d. uniform samples and let the Assumption~\ref{weaklyunstable} be valid. Then, with probability higher than $1-\frac{3+2n}{N}$,
\BEA
\| R_N\Delta_M f - \Delta^{\mathrm{RBF}}_X \mathbf{f} \|_{L^2(\mu_N)} = O(N^{-\frac{\beta}{4}}) + O\left(N^{\frac{-2 \alpha+(n-d)}{4d}}\right), \label{errordeltaRBF}
\EEA
for some $0<\beta\leq 1$. Also, with probability higher than $1 - \frac{3}{N}$,
\BEA
\Vert R_N \big \langle \textup{grad}_g f, \textup{grad}_g h\big\rangle_g- R_N\langle \textup{grad}_gI_{\phi_s}\mathbf{f}, \textup{grad}_g I_{\phi_s}\mathbf{h}\rangle \Vert_{{L^2}(\mu_N)}  =O(N^{-\frac{\beta}{4}}) + O\left(N^{\frac{-2 \alpha+(n-d)}{4d}}\right).\label{error_innerproduct}
\EEA
\end{lem}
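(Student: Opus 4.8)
The plan is to handle the two displayed estimates by the same two-stage strategy used in the proof of Lemma~\ref{lem:perturb}: reduce each discrete $L^2(\mu_N)$ bound to (i) a continuous $L^2(M)$ bound on an interpolation error, and (ii) a Hoeffding concentration step, where the $L^\infty$ control needed to run Hoeffding is supplied by Assumption~\ref{weaklyunstable}. The algebraic-in-$N$ term will come from stage (i) and the Monte-Carlo term $O(N^{-\beta/4})$ from stage (ii).

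For \eqref{errordeltaRBF} the key observation is the identity $\mathbf{G}_\ell \mathbf{v} = R_N \mathcal{G}_\ell I_{\phi_s}\mathbf{v}$, valid for every $\mathbf{v}\in\mathbb{R}^N$, which is immediate from $\mathbf{G}_\ell = \mathbf{B}_\ell\boldsymbol{\Phi}^{-1}$ in \eqref{diff-op} and the interpolation constraint. Hence $\Delta^{\mathrm{RBF}}_X\mathbf{f}$ of \eqref{NRBF_Laplacian} is the restriction $R_N E_N$ of the smooth function $E_N := -\sum_{\ell=1}^n \mathcal{G}_\ell I_{\phi_s}(\mathbf{G}_\ell\mathbf{f})$, so $R_N\Delta_M f - \Delta^{\mathrm{RBF}}_X\mathbf{f} = R_N(\Delta_M f - E_N)$. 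I would then telescope $\Delta_M f - E_N = \sum_\ell \mathcal{G}_\ell[\mathcal{G}_\ell f - I_{\phi_s}(\mathbf{G}_\ell\mathbf{f})]$ and, writing $h_\ell := \mathcal{G}_\ell I_{\phi_s}\mathbf{f}$ so that $\mathbf{G}_\ell\mathbf{f}=R_N h_\ell$, split each bracket as $\mathcal{G}_\ell(f - I_{\phi_s}\mathbf{f}) + (h_\ell - I_{\phi_s}R_N h_\ell)$. This expresses the error as first and second tangential derivatives of interpolation errors of the \emph{fixed} smooth functions $f$ and $h_\ell$. I would bound $\|\Delta_M f - E_N\|_{L^2(M)}$ by these derivative interpolation errors, using the generalized Sobolev sampling inequalities in the spirit of \cite{harlim2022rbf,Wendland2005Scat} to control derivatives and yield the term $O(N^{\frac{-2\alpha+(n-d)}{4d}})$. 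For the $L^\infty$ bound, since $E_N$ is built from second derivatives of interpolants, Assumption~\ref{weaklyunstable} gives $\|E_N\|_{L^\infty(M)} \lesssim \|I_{\phi_s}(\mathbf{G}_\ell\mathbf{f})\|_{W^{2,\infty}(M)} = O(N^{\frac{1-\beta}{4}})$, while $\|\Delta_M f\|_{L^\infty(M)}$ is a constant; applying Hoeffding to the samples of $(\Delta_M f - E_N)^2$ converts $\|\cdot\|^2_{L^2(\mu_N)}$ into $\|\cdot\|^2_{L^2(M)} + O(\|\Delta_M f - E_N\|_{L^\infty}^2 N^{-1/2}) = \|\cdot\|^2_{L^2(M)} + O(N^{-\beta/2})$, and the square root gives the $O(N^{-\beta/4})$ term. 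A union bound over the $n$ coordinate directions and the few interpolation and concentration events accounts for the stated probability $1-\frac{3+2n}{N}$.

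For \eqref{error_innerproduct} I would run the identical program on the scalar function $E' := \langle\textup{grad}_g f,\textup{grad}_g h\rangle_g - \langle\textup{grad}_g I_{\phi_s}\mathbf{f}, \textup{grad}_g I_{\phi_s}\mathbf{h}\rangle_g$, which by bilinearity telescopes into $\langle\textup{grad}_g(f - I_{\phi_s}\mathbf{f}),\textup{grad}_g h\rangle_g + \langle\textup{grad}_g I_{\phi_s}\mathbf{f},\textup{grad}_g(h - I_{\phi_s}\mathbf{h})\rangle_g$. Cauchy--Schwarz bounds $\|E'\|_{L^2(M)}$ by first-derivative $L^2(M)$ interpolation errors (again via the sampling inequalities), producing the algebraic term; $\|E'\|_{L^\infty(M)} = O(N^{\frac{1-\beta}{4}})$ follows from Assumption~\ref{weaklyunstable} applied in $W^{1,\infty}$; and Hoeffding again gives the $O(N^{-\beta/4})$ term, now with the smaller failure probability $\frac{3}{N}$ because only two gradient interpolation events and one concentration event appear.

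The main obstacle I anticipate is the derivative interpolation step. The only self-contained error estimate quoted (Lemma~\ref{RKHS L2 Convergence}) controls function values in $L^2(M)$, whereas here I need $L^2(M)$ control of first and second tangential derivatives of the interpolation error, together with the extra layer created by re-interpolating the vectors $\mathbf{G}_\ell\mathbf{f}=R_N h_\ell$. Making the generalized Sobolev sampling inequalities deliver precisely the exponent $\frac{-2\alpha+(n-d)}{4d}$, and verifying that the $W^{2,\infty}$ growth allowed by Assumption~\ref{weaklyunstable} keeps the Hoeffding remainder at the sub-Monte-Carlo order $N^{-\beta/2}$, is the delicate part; the telescoping and concentration steps are routine once those derivative estimates are established.
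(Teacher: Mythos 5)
Your proposal follows essentially the same route as the paper: its proof of this lemma simply invokes Proposition~4.2 and Lemma~4.3/4.4 of \cite{harlim2022rbf} for the continuous interpolation-error bounds and combines them with Hoeffding's concentration inequality, with the same probability bookkeeping ($1-2\delta_f-2\delta_{\textup{grad}_g I_{\phi_s}f}-\tfrac{1}{N}=1-\tfrac{3+2n}{N}$), which is exactly the two-stage reduction you describe. Your telescoping decomposition and the $W^{2,\infty}$-based Hoeffding step are precisely the content of those cited external results, and the derivative-interpolation estimates you flag as the delicate part are likewise outsourced by the paper to \cite{harlim2022rbf} rather than proved in situ.
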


\begin{proof}
The bound in \eqref{errordeltaRBF} is immediate by Hoeffding's concentration inequality ( Lemma~4.9 in \cite{harlim2022rbf}) and Proposition 4.2 in \cite{harlim2022rbf}. To determine the probability, we clarify that $\delta_f = N^{-1}$ and $\delta_u = nN^{-1}$ for any $u\in \mathfrak{X}(M)$ that is represented by $n$-component vector valued function. Since the Hoeffding's requires another probability $1/N$, in total, we have $1 - 2\delta_{f} - 2 \delta_{ \textup{grad}_g I_{\phi_s} f }- \frac{1}{N}  = 1 - \frac{3+2n}{N}$. The bound in \eqref{error_innerproduct} is an immediate consequence of Hoeffding's concentration inequality and applying Lemma 4.3 as in the proof of Lemma 4.4 in \cite{harlim2022rbf}.
\end{proof}

For reader's convenience, we will also use the following Bernstein's matrix inequality.
\begin{lem}[Corollary 6.1.2 of \cite{tropp2015introduction}.]\label{Bernstein} Let $\mathbf{S}_i$ be i.i.d. random matrix of size $K\times K$. Suppose that,
\[\| \mathbf{S}_i - \mathbb{E}[\mathbf{S}_i] \|_2\leq L, \quad \forall i.\]
Denote $\mathbf{Z} := \sum_{i=1}^N\mathbf{S}_i $ and
\[
v(\mathbf{Z}) = \max \left\{ \left\|\mathbb{E}\left[(\mathbf{Z}-\mathbb{E}[\mathbf{Z}])(\mathbf{Z}-\mathbb{E}[\mathbf{Z}])^\top\right] \right\|_2,\left\|\mathbb{E}\left[(\mathbf{Z}-\mathbb{E}[\mathbf{Z}])^\top (\mathbf{Z}-\mathbb{E}[\mathbf{Z}])\right] \right\|_2 \right\}.
\]
Then for any $t>0$,
\[
\mathbb{P}\left(\| \mathbf{Z} - \mathbb{E}[\mathbf{Z}]\|_2\geq t \right) \leq 2K  \max\left\{\exp\left(-\frac{3t^2}{8v(\mathbf{Z})}\right), \exp\left(-\frac{3t}{8L}\right)\right\}.
\]
\end{lem}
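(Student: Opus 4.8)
The plan is to recognize this as the general (non-Hermitian) matrix Bernstein inequality and to follow the matrix Laplace-transform method of \cite{tropp2015introduction}; since the statement is quoted verbatim, in the paper one simply cites it, but I sketch the underlying argument. First I would reduce to the Hermitian case via the Hermitian dilation $\mathcal{H}(\mathbf{S}) = \begin{pmatrix} 0 & \mathbf{S} \\ \mathbf{S}^\top & 0\end{pmatrix}$, a $2K\times 2K$ symmetric matrix with $\lambda_{\max}(\mathcal{H}(\mathbf{S})) = \|\mathbf{S}\|_2$ and $\mathcal{H}(\mathbf{S})^2 = \begin{pmatrix} \mathbf{S}\mathbf{S}^\top & 0 \\ 0 & \mathbf{S}^\top\mathbf{S}\end{pmatrix}$. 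Setting $\mathbf{Y}_i := \mathcal{H}(\mathbf{S}_i - \mathbb{E}[\mathbf{S}_i])$ and $\mathbf{W} := \sum_i \mathbf{Y}_i = \mathcal{H}(\mathbf{Z} - \mathbb{E}[\mathbf{Z}])$, linearity of $\mathcal{H}$ gives $\mathbb{E}[\mathbf{Y}_i]=0$ and $\lambda_{\max}(\mathbf{Y}_i)\leq \|\mathbf{Y}_i\|_2 = \|\mathbf{S}_i-\mathbb{E}[\mathbf{S}_i]\|_2\leq L$. The event $\{\|\mathbf{Z}-\mathbb{E}[\mathbf{Z}]\|_2\geq t\}$ becomes $\{\lambda_{\max}(\mathbf{W})\geq t\}$, the ambient dimension becomes $2K$ (accounting for the prefactor $2K$), and the block structure of $\mathbf{W}^2$ shows $\|\mathbb{E}[\mathbf{W}^2]\|_2 = v(\mathbf{Z})$ exactly, since the spectral norm of a block-diagonal Hermitian matrix is the larger of the two block norms.

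Next I would apply the matrix Chernoff bound $\mathbb{P}(\lambda_{\max}(\mathbf{W})\geq t)\leq \inf_{\theta>0} e^{-\theta t}\,\mathbb{E}[\operatorname{tr} e^{\theta \mathbf{W}}]$ and control the trace moment generating function. The crucial structural input is the subadditivity of matrix cumulant generating functions, $\mathbb{E}[\operatorname{tr} e^{\theta \sum_i \mathbf{Y}_i}] \leq \operatorname{tr}\exp\!\big(\sum_i \log\mathbb{E}[e^{\theta \mathbf{Y}_i}]\big)$, which follows from Lieb's concavity theorem; this is the device that circumvents noncommutativity, since one cannot factor the exponential of a matrix sum into a product. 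I would then bound each summand semidefinitely: using $\mathbb{E}[\mathbf{Y}_i]=0$, $\lambda_{\max}(\mathbf{Y}_i)\leq L$, the scalar estimate $e^{\theta x}-\theta x-1\leq \tfrac{\theta^2 x^2/2}{1-\theta L/3}$ valid for $x\leq L$, and the transfer rule for matrix functions, one obtains $\log\mathbb{E}[e^{\theta \mathbf{Y}_i}]\preceq g(\theta)\,\mathbb{E}[\mathbf{Y}_i^2]$ with $g(\theta)=\tfrac{\theta^2/2}{1-\theta L/3}$ for $0<\theta<3/L$.

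Summing the semidefinite bounds and invoking monotonicity of the trace exponential gives $\mathbb{E}[\operatorname{tr} e^{\theta \mathbf{W}}] \leq 2K\exp\!\big(g(\theta)\,\|\mathbb{E}[\mathbf{W}^2]\|_2\big) = 2K\,e^{g(\theta)v(\mathbf{Z})}$. Optimizing the exponent $-\theta t + g(\theta)v(\mathbf{Z})$ over $\theta\in(0,3/L)$ with the choice $\theta = t/(v(\mathbf{Z})+Lt/3)$, which satisfies $1-\theta L/3 = v(\mathbf{Z})/(v(\mathbf{Z})+Lt/3)<1$ and $\theta<3/L$, yields the single-exponential Bernstein bound $\mathbb{P}(\lambda_{\max}(\mathbf{W})\geq t)\leq 2K\exp\!\big(\tfrac{-t^2/2}{v(\mathbf{Z})+Lt/3}\big)$. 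Finally I would convert this to the stated two-regime form through a short case analysis of $\tfrac{t^2/2}{v+Lt/3}\geq \tfrac{3}{8}\min\{t^2/v,\,t/L\}$: when $Lt\leq v$ the denominator is at most $4v/3$ so the exponent dominates $\tfrac{3}{8}t^2/v$, while when $Lt> v$ the denominator is at most $4Lt/3$ so it dominates $\tfrac{3}{8}t/L$; hence $\exp\!\big(\tfrac{-t^2/2}{v+Lt/3}\big)\leq \max\{\exp(-3t^2/(8v)),\,\exp(-3t/(8L))\}$, which is precisely the claimed inequality.

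I expect the main obstacle to be the two matrix-analytic inputs, namely Lieb's concavity theorem behind the cumulant subadditivity in the second paragraph and the semidefinite mgf estimate via the transfer rule; these encode the genuinely noncommutative phenomena that have no scalar analogue, whereas the dilation reduction and the Chernoff optimization are routine. In the context of this paper it is cleanest simply to invoke \cite{tropp2015introduction} for the statement and use it as a black box.
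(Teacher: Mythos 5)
Your proposal is correct and consistent with the paper, which states this lemma as a direct quotation of Corollary~6.1.2 of \cite{tropp2015introduction} and offers no proof of its own; your concluding recommendation to invoke the reference as a black box is exactly what the paper does. Your sketch of the underlying argument (Hermitian dilation to reduce to the symmetric case with ambient dimension $2K$ and matrix variance $v(\mathbf{Z})$, Lieb-based subadditivity of the matrix cumulant generating function, the Bernstein mgf bound, and the case analysis converting $\exp\bigl(\tfrac{-t^2/2}{v+Lt/3}\bigr)$ into the two-regime form with constant $3/8$) is an accurate rendering of Tropp's proof, so no gap remains.
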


Then we can estimate the second term in right hand side of (\ref{error_estimate_2}) as follows:
\begin{lem}\label{lemma_error3} Let $\textup{A}$ be defined as in (\ref{Ab}), nonsingular, with smallest singular value satisfying,
\BEA
\frac{1}{2}\sigma_{\textup{min}} (\textup{A}) > C_1  K{N}^{-\frac{\beta}{4}}+C_2 K N^{\frac{-2 \alpha+(n-d)}{4d}},\label{Anonsingular_assumption}
\EEA
as $N\to \infty$, for all $C_1, C_2>0$ that are independent of $K$ and $N$, any $\alpha>n/2$, and some $0<\beta\leq 1$. Let $u^K$ be the solution of (\ref{weak-form-finite}) and $\tilde{u}^K$ be the solution of Spectral-RBF as in (\ref{spectral-RBF-soln}). Let $\textup{U}$ and $\tilde{\mathbf{U}}$ be the coefficients of $u^K$ and $\tilde{u}^K$ respectively, and the coefficients $c \in C(M)$, $\kappa\in C^1(M)$, and $f\in L^2(M)$. Let the assumptions in Lemma~\ref{lem_stability} to be valid. Then, with probability higher than $1-\frac{8 m^{2}+16 m+2n + 103}{N}$,
\BEA
\Vert \textup{U}-\tilde{\mathbf{U}} \Vert_2\leq \frac{\Vert \textup{U}\Vert_2}{\sigma_{\textup{min}} (\textup{A}) } \left(C_1  K{N}^{-\frac{\beta}{4}}+C_2 KN^{\frac{-2 \alpha+(n-d)}{4 d}}\right).\notag
\EEA
\end{lem}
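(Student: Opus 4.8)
The plan is to invoke the perturbation bound of Lemma~\ref{perturbation} with the exact pair $(\textup{A},\textup{b})$ from \eqref{Ab} and its discrete counterpart $(\tilde{\mathbf{A}},\tilde{\mathbf{b}})$ from \eqref{Atilde2}--\eqref{btilde2}. The entire content of the proof then reduces to (i) producing spectral-norm bounds for the perturbations $\|\textup{A}-\tilde{\mathbf{A}}\|_2$ and $\|\textup{b}-\tilde{\mathbf{b}}\|_2$, each of order $K\bigl(N^{-\beta/4}+N^{\frac{-2\alpha+(n-d)}{4d}}\bigr)$, and (ii) checking that these bounds, together with the hypothesis \eqref{Anonsingular_assumption}, force $\varepsilon<1/\textup{cond}(\textup{A})$ so that Lemma~\ref{perturbation} applies (this simultaneously certifies that $\tilde{\mathbf{A}}$ is nonsingular, which is the only place nonsingularity of $\tilde{\mathbf{A}}$ is needed). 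Writing $\textup{cond}(\textup{A})=\|\textup{A}\|_2/\sigma_{\textup{min}}(\textup{A})$, the normalization by $\|\textup{A}\|_2$ in $\varepsilon$ cancels, so that $\varepsilon\,\textup{cond}(\textup{A})\le \|\textup{A}-\tilde{\mathbf{A}}\|_2/\sigma_{\textup{min}}(\textup{A})<\tfrac12$ by \eqref{Anonsingular_assumption}. The denominator $1-\varepsilon\,\textup{cond}(\textup{A})$ is then bounded below by $\tfrac12$, and the conclusion of Lemma~\ref{perturbation} collapses to $\|\textup{U}-\tilde{\mathbf{U}}\|_2\le 4\,\|\textup{U}\|_2\,\|\textup{A}-\tilde{\mathbf{A}}\|_2/\sigma_{\textup{min}}(\textup{A})$, which is exactly the claimed estimate after absorbing the constant into $C_1,C_2$.

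For $\|\textup{b}-\tilde{\mathbf{b}}\|_2$ I would treat each coordinate $\textup{b}_j-\tilde{\mathbf{b}}_j=\langle f,\varphi_j\rangle_{L^2(M)}-\tfrac1N\mathbf{f}^\top\tilde{\boldsymbol{\varphi}}_j$ by inserting the intermediate quantity $\tfrac1N\mathbf{f}^\top\boldsymbol{\varphi}_j$: the difference $\langle f,\varphi_j\rangle_{L^2(M)}-\tfrac1N\mathbf{f}^\top\boldsymbol{\varphi}_j$ is a pure Monte-Carlo error controlled at rate $O(N^{-1/2})$ by Hoeffding, while $\tfrac1N\mathbf{f}^\top(\boldsymbol{\varphi}_j-\tilde{\boldsymbol{\varphi}}_j)\le\|f\|_{L^2(\mu_N)}\,\|\boldsymbol{\varphi}_j-\tilde{\boldsymbol{\varphi}}_j\|_{L^2(\mu_N)}$ is controlled by the eigenvector bound \eqref{error_eigen} of Lemma~\ref{lem_eig_conv}. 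Taking the $\ell^2$-norm over the $K$ coordinates costs a factor $\sqrt{K}$, which is dominated by the factor $K$ appearing in the matrix perturbation below; moreover the Monte-Carlo rate $N^{-1/2}$ is dominated by $N^{-\beta/4}$ since $\beta\le 1$.

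The crux is $\|\textup{A}-\tilde{\mathbf{A}}\|_2$. I would introduce the fully-sampled but exact matrix $\textup{A}^{\mathrm{MC}}$ whose $(j,k)$ entry is $\tfrac1N\sum_i\bigl(c(\mathbf{x}_i)\varphi_k(\mathbf{x}_i)+\kappa(\mathbf{x}_i)\Delta_M\varphi_k(\mathbf{x}_i)-\langle\textup{grad}_g\kappa,\textup{grad}_g\varphi_k\rangle_g(\mathbf{x}_i)\bigr)\varphi_j(\mathbf{x}_i)$, and split $\|\textup{A}-\tilde{\mathbf{A}}\|_2\le\|\textup{A}-\textup{A}^{\mathrm{MC}}\|_2+\|\textup{A}^{\mathrm{MC}}-\tilde{\mathbf{A}}\|_2$. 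The first piece is the Monte-Carlo error of the whole matrix: writing $\textup{A}^{\mathrm{MC}}=\tfrac1N\sum_i\mathbf{S}_i$ with i.i.d.\ rank-one $K\times K$ summands of mean $\textup{A}$, the matrix Bernstein inequality of Lemma~\ref{Bernstein} gives $\|\textup{A}-\textup{A}^{\mathrm{MC}}\|_2=O(N^{-1/2})$ up to a polynomial-in-$K$ prefactor and a $\log N$ factor, again dominated by $N^{-\beta/4}$. For the second piece I would bound each entry by Cauchy--Schwarz in $L^2(\mu_N)$, so that $|\textup{A}^{\mathrm{MC}}_{jk}-\tilde{\mathbf{A}}_{jk}|$ is controlled by $\|\boldsymbol{\varphi}_j-\tilde{\boldsymbol{\varphi}}_j\|_{L^2(\mu_N)}$ (Lemma~\ref{lem_eig_conv}), by the Laplacian consistency bound \eqref{errordeltaRBF}, and by the gradient inner-product bound \eqref{error_innerproduct}, each of order $N^{-\beta/4}+N^{\frac{-2\alpha+(n-d)}{4d}}$; bounding the spectral norm by the Frobenius norm over the $K^2$ entries then yields the advertised factor $K$.

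The main obstacle is the second piece $\|\textup{A}^{\mathrm{MC}}-\tilde{\mathbf{A}}\|_2$, because the eigenvector perturbation must be pushed through the \emph{discrete} operators: the term $\kappa(\mathbf{x}_i)\Delta_X^{\mathrm{RBF}}\tilde{\boldsymbol{\varphi}}_k$ differs from $\kappa(\mathbf{x}_i)\Delta_M\varphi_k(\mathbf{x}_i)$ not only through the consistency estimate \eqref{errordeltaRBF} (which compares $\Delta_X^{\mathrm{RBF}}\boldsymbol{\varphi}_k$ with $R_N\Delta_M\varphi_k$) but also through the residual $\Delta_X^{\mathrm{RBF}}(\boldsymbol{\varphi}_k-\tilde{\boldsymbol{\varphi}}_k)$, and the same issue arises for $\mathbf{G}_\ell(\boldsymbol{\varphi}_k-\tilde{\boldsymbol{\varphi}}_k)$ in the gradient term. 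Controlling these residuals requires the $L^2(\mu_N)$ operator-norm growth of $\Delta_X^{\mathrm{RBF}}$ and $\mathbf{G}_\ell$, which is precisely where Assumption~\ref{weaklyunstable} enters: the interpolation matrix is only \emph{weakly} unstable, so these operator norms grow at most like $N^{(1-\beta)/2}$, slow enough that their product with the eigenvector error $N^{-1/4}$ from \eqref{error_eigen} still decays at the rate $N^{-\beta/4}$. Keeping this instability below the Monte-Carlo decay is what pins down the admissible range $0<\beta\le1$ and is the delicate bookkeeping step. Finally, a union bound over the events invoked in Lemmas~\ref{lem_eig_conv} and \ref{lem_stability}, the consistency estimates \eqref{errordeltaRBF}--\eqref{error_innerproduct}, and the Hoeffding and Bernstein concentration inequalities accounts for the stated success probability $1-\frac{8m^{2}+16m+2n+103}{N}$.
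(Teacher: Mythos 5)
Your proposal follows essentially the same route as the paper's proof: the same splitting $\Vert \textup{A}-\tilde{\mathbf{A}}\Vert_2\leq \Vert \textup{A}-\mathbf{A}\Vert_2+\Vert \mathbf{A}-\tilde{\mathbf{A}}\Vert_2$ through the Monte-Carlo matrix (your $\textup{A}^{\mathrm{MC}}$ is the paper's $\mathbf{A}$ from \eqref{A}), matrix Bernstein for the first piece, entrywise Cauchy--Schwarz with \eqref{errordeltaRBF}, \eqref{error_innerproduct}, \eqref{error_eigen} and Assumption~\ref{weaklyunstable} for the residuals $\Delta_X^{\mathrm{RBF}}(\boldsymbol{\varphi}_k-\tilde{\boldsymbol{\varphi}}_k)$ in the second, and the verification $\varepsilon\,\textup{cond}(\textup{A})<\tfrac12$ feeding into Lemma~\ref{perturbation}. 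The only cosmetic difference is that you pass from entrywise bounds to the spectral norm via the Frobenius norm rather than via $\sqrt{\Vert\cdot\Vert_1\Vert\cdot\Vert_\infty}$, which yields the same factor of $K$.
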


\begin{proof}
\comment{
Define
\BEA
\mathbf{A}_{jk}&:= &\frac{1}{N}\sum_{i=1}^N\left(\kappa(\textbf{x}_i)\Delta_M\varphi_k(\textbf{x}_i)-\big\langle \textup{grad}_g \kappa(\textbf{x}_i), \textup{grad}_g \varphi_k(\textbf{x}_i)\big\rangle_g+c \varphi_k(\textbf{x}_i)\right)\varphi_j(\textbf{x}_i),\\
\mathbf{b}_j &:=& \frac{1}{N}\mathbf{f}^\top \boldsymbol{\varphi}_j.
\EEA
\BEA
\Vert \textup{A}-\tilde{\mathbf{A}}\Vert_2\leq \Vert \textup{A}-\mathbf{A}\Vert_2+\Vert \mathbf{A}-\tilde{\mathbf{A}}\Vert_2.
\label{error}
\EEA

First term in (\ref{error_A}): Notice that for $1\leq j,k\leq K$, $A_{jk}=\mathbb{E}[\mathbf{A}_{jk}]$. Then by using Hoeffding's inequality,
$$
P(|A_{jk}-\mathbf{A}_{jk}|\geq \varepsilon)\leq 2 \text{exp}\left(\frac{-2\varepsilon^2N}{D^2}\right)
$$
where $D$ is a constant depends on $\varphi,\kappa,c$. Then we have
$$
|A_{jk}-\mathbf{A}_{jk}|=O(\frac{1}{\sqrt{N}})
$$
with probability higher than $1-\frac{2}{N}$ where we chose $\epsilon=\sqrt{\frac{\log (N)}{N}}$ and ignored the log factor. Then
$$
\Vert A-\mathbf{A}\Vert_2\leq \sqrt{K} \Vert A-\mathbf{A}\Vert_1\leq K^{3/2}O(\frac{1}{\sqrt{N}}).
$$}

We will bound each of the right hand terms below,
\BEA
\Vert \textup{A}-\tilde{\mathbf{A}}\Vert_2\leq \Vert \textup{A}-\mathbf{A}\Vert_2+\Vert \mathbf{A}-\tilde{\mathbf{A}}\Vert_2
\label{error_A}
\EEA
where $\mathbf{A}$ is defined in (\ref{A}) and $\tilde{\mathbf{A}}$ is defined in (\ref{Atilde2}). To bound the first term in \eqref{error_A}, we will use the Bernstein matrix inequality from Lemma~\ref{Bernstein}. In our case, $\mathbf{S}_i$ has components,
\[
[\mathbf{S}_i]_{jk} = \frac{1}{N} \left[\left(\kappa(\textbf{x}_i)\Delta_M\varphi_k(\textbf{x}_i)-\big\langle \textup{grad}_g \kappa(\textbf{x}_i), \textup{grad}_g \varphi_k(\textbf{x}_i)\big\rangle_g+c \varphi_k(\textbf{x}_i)\right)\varphi_j(\textbf{x}_i) \right]
\]
and $[\mathbb{E}[\mathbf{S}_i]]_{jk} = \frac{1}{N} \textup{A}_{jk}$. This also means $\mathbf{Z} = \mathbf{A}$
and $\mathbb{E}[\mathbf{Z}] = \textup{A}$. If $\kappa \in C^1(M), c \in C(M)$, we can uniformly bound each components of $\mathbf{S}_i$, that is,
\[
\| \mathbf{S}_i -\mathbb{E}[\mathbf{S}_i] \|_2 \leq \|\mathbf{S}_i \|_2+ \|\mathbb{E}[\mathbf{S}_i] \|_2 \leq \|\mathbf{S}_i \|_2 + \mathbb{E}[ \|\mathbf{S}_i \|_2]\leq \sqrt{\|\mathbf{S}_i \|_1\|\mathbf{S}_i \|_\infty} + \mathbb{E}[ \sqrt{\|\mathbf{S}_i \|_1\|\mathbf{S}_i \|_\infty}]  \leq C\frac{K}{N}:= L,
\]
for some $C>0$ that depends on $\kappa, c, \varphi_k$. In the second inequality above, we have used the Jensen's inequality. In the third inequality, we used the matrix norm inequality $\| \textup{A}\|^2_2 \leq \|\textup{A}\|_1 \|\textup{A}\|_\infty,$ where $\|\textup{A}\|_1 = \max_{1\leq j\leq K} \sum_{i=1}^K |\textup{A}_{ij}|$ and $\|\textup{A}\|_\infty = \max_{1\leq i\leq K} \sum_{j=1}^K |\textup{A}_{ij}|$. We also note that,
\[
\mathbb{E}\left[(\mathbf{S}_i-\mathbb{E}[\mathbf{S}_i])(\mathbf{S}_i-\mathbb{E}[\mathbf{S}_i])^\top\right] = \mathbb{E}\left[\mathbf{S}_i\mathbf{S}_i^\top\right] - \frac{1}{N^2}\textup{AA}^\top  \preceq \mathbb{E}\left[\mathbf{S}_i\mathbf{S}_i^\top\right] ,
\]
where $\textup{A} \preceq \textup{B}$ means $\textup{B}-\textup{A}$ is positive semi-definite. Then,
\[
\left\|\mathbb{E}\left[(\mathbf{Z}-\mathbb{E}[\mathbf{Z}])(\mathbf{Z}-\mathbb{E}[\mathbf{Z}])^\top\right] \right\|_2 = \big\|\sum_{i=1}^N\mathbb{E}\left[(\mathbf{S}_i-\mathbb{E}[\mathbf{S}_i])(\mathbf{S}_i-\mathbb{E}[\mathbf{S}_i])^\top\right] \big\|_2 \leq N \| \mathbb{E}[\mathbf{S}_i\mathbf{S}_i^\top] \|_2\leq N \mathbb{E}[\|\mathbf{S}_i\mathbf{S}_i^\top\|_2] \leq C^2 \frac{K^{2}}{N},
\]
where we have used the fact that $\|\mathbf{S}_i\mathbf{S}_i^\top\|_2 \leq \sqrt{ \|\mathbf{S}_i\mathbf{S}_i^\top\|_1 \|\mathbf{S}_i\mathbf{S}_i^\top\|_\infty} \leq C^2 K^{2}N^{-2}$. Using the same argument, one can deduce the same error rate for $\left\|\mathbb{E}\left[(\mathbf{Z}-\mathbb{E}[\mathbf{Z}])^\top(\mathbf{Z}-\mathbb{E}[\mathbf{Z}])\right] \right\|_2$ so $v(Z) \leq C^2 K^{2} {N}^{-1}$. With these bounds and Lemma~\ref{Bernstein}, we conclude that with probability higher than $1-\frac{1}{N}$,
\[
\left\| \mathbf{Z} - \mathbb{E}[\mathbf{Z}] \right\|_2  \leq \begin{cases} C\sqrt{\frac{8}{3}\log(2KN)}\frac{K}{N^{1/2}}, & \text{ if } tL \leq v(\mathbf{Z}),  \\
\frac{8C}{3}\log(2KN) \frac{K}{N}, & \text{ if } tL \geq v(\mathbf{Z}).
\end{cases}
\]
Taking the worse rate among the two, we have
\BEA
\|\mathbf{A} -\textup{A} \|_2  = \left\| \mathbf{Z} - \mathbb{E}[\mathbf{Z}] \right\|_2  \leq C\sqrt{\frac{8}{3}\log(2KN)}\frac{K}{N^{1/2}}. \label{boundA1}
\EEA

Now we bound the second term in (\ref{error_A}). The error in $|\mathbf{A}_{jk}-\tilde{\mathbf{A}}_{jk}|$ has three parts:
$$
\begin{aligned}
D_{jk}&:=\frac{1}{N}\left|\sum_{i=1}^N\big[\kappa(\textbf{x}_i)\Delta_M\varphi_k(\textbf{x}_i)\varphi_j(\textbf{x}_i)- \kappa(\textbf{x}_i)\Delta_X^{\mathrm{RBF}}\tilde{\boldsymbol{\varphi}}_k(\textbf{x}_i)\tilde{\boldsymbol{\varphi}}_j(\textbf{x}_i)\big]\right|,\\
E_{jk}&:=\frac{1}{N}\left|\sum_{i=1}^N\big[\big\langle \textup{grad}_g \kappa(\textbf{x}_i), \textup{grad}_g \varphi_k(\textbf{x}_i)\big\rangle_g\varphi_j(\textbf{x}_i) -\sum_{\ell=1}^n(\mathbf{G}_\ell \boldsymbol{\kappa})_i(\mathbf{G}_\ell\tilde{\boldsymbol{\varphi}}_k)_i\tilde{\boldsymbol{\varphi}}_j(\textbf{x}_i)\big]\right|,\\
F_{jk}&:=\frac{1}{N}\left|\sum_{i=1}^N\big[c(\mathbf{x}_i) \varphi_k(\textbf{x}_i)\varphi_j(\textbf{x}_i)-c(\mathbf{x}_i)\tilde{\boldsymbol{\varphi}}_k(\textbf{x}_i)\tilde{\boldsymbol{\varphi}}_j(\textbf{x}_i)  \big]\right|.
\end{aligned}
$$
First, we note that,
\BEA
D_{jk}&\leq& \Vert\boldsymbol{\kappa}\Vert_{\infty}\left(\Vert R_N \Delta_M\varphi_k-\Delta^{\mathrm{RBF}}_X\boldsymbol{\varphi}_k\Vert_{{L^2}(\mu_N)}
+\Vert\Delta^{\mathrm{RBF}}_X\boldsymbol{\varphi}_k
-\Delta^{\mathrm{RBF}}_X\tilde{\boldsymbol{\varphi}}_k\Vert_{{L^2}(\mu_N)}   \right)\Vert\boldsymbol{\varphi}_j\Vert_{{L^2}(\mu_N)} \notag\\
&& +\Vert\boldsymbol{\kappa}\Vert_{\infty}\Vert\Delta^{\mathrm{RBF}}_X\tilde{\boldsymbol{\varphi}}_k\Vert_{{L^2}(\mu_N)}\Vert\boldsymbol{\varphi}_j-\tilde{\boldsymbol{\varphi}}_j\Vert_{{L^2}(\mu_N)}. \notag
\EEA
The second term can be bounded above by the Hoeffding's inequality. Particularly, with probability higher than $1-\frac{2 m^{2}+4 m+25}{N}$,
\BEA
\Vert\Delta^{\mathrm{RBF}}_X\boldsymbol{\varphi}_k-\Delta^{\mathrm{RBF}}_X\tilde{\boldsymbol{\varphi}}_k\Vert_{{L^2}(\mu_N)}  = 
\sum_{\ell=1}^n \|\mathcal{G}_{\ell} I_{\phi_s} \mathcal{G}_{\ell}I_{\phi_s} (\boldsymbol{\varphi}_k-\tilde{\boldsymbol{\varphi}}_k) \|_{L^2(M)}  + O\Big(N^{-\frac{\beta}{4}}\Big) \notag =  O\left(N^{\frac{-2 \alpha+(n-d)}{4d}}\right) + O\Big(N^{-\frac{\beta}{4}}\Big)\notag
\EEA
where the last equality follows the same argument as in Lemma~\ref{lem:perturb} under the stability Assumption~\ref{weaklyunstable} over the Sobolev $W^{2,\infty}(M)$-norm. Together with \eqref{errordeltaRBF} and \eqref{error_eigen} and the boundedness of $\boldsymbol{\kappa}, \boldsymbol{\varphi}_j$ and $\Delta^{\mathrm{RBF}}_X\tilde{\boldsymbol{\varphi}}_k$, with probability higher than $1-\frac{2 m^{2}+4 m+2n+ 28}{N}$,
\[
D_{jk} = O\Big(N^{-\frac{\beta}{4}}\Big)  + O\left(N^{\frac{-2 \alpha+(n-d)}{4d}}\right).
\]

By using the interpolation error in \eqref{error_innerproduct}, the error bound on eigenfunctions (\ref{error_eigen}) and the boundedness assumptions, with probability higher than $1-\frac{2 m^{2}+4 m+27}{N}$,
\BEA
E_{jk}&\leq& \Vert R_N \big \langle \textup{grad}_g \kappa, \textup{grad}_g \varphi_k\big\rangle_g- R_N\langle \textup{grad}_gI_{\phi_s}\boldsymbol{\kappa}, \textup{grad}_g I_{\phi_s}\boldsymbol{\varphi}_k\rangle \Vert_{{L^2}(\mu_N)} \Vert\boldsymbol{\varphi}_j\Vert_{{L^2}(\mu_N)} \notag \\
&& +\Vert R_N\langle \textup{grad}_gI_{\phi_s}\boldsymbol{\kappa}, \textup{grad}_g I_{\phi_s}\boldsymbol{\varphi}_k\rangle- R_N\langle \textup{grad}_gI_{\phi_s}\boldsymbol{\kappa}, \textup{grad}_g I_{\phi_s}\tilde{\boldsymbol{\varphi}}_k\rangle\rangle\Vert_{{L^2}(\mu_N)}\Vert\boldsymbol{\varphi}_j\Vert_{{L^2}(\mu_N)} \notag\\
&&+\Vert R_N\langle\textup{grad}_gI_{\phi_s}\boldsymbol{\kappa}, \textup{grad}_g I_{\phi_s}\tilde{\boldsymbol{\varphi}}_k\rangle\Vert_{{L^2}(\mu_N)}\Vert\boldsymbol{\varphi}_j-\tilde{\boldsymbol{\varphi}}_j\Vert_{{L^2}(\mu_N)}\notag \\
&=& O\Big(N^{-\frac{\beta}{4}}\Big)  + O\left(N^{\frac{-2 \alpha+(n-d)}{4d}}\right).\notag
\EEA
And in the last term, with probability higher than $1-\frac{2 m^{2}+4 m+24}{N}$,
$$
\begin{aligned}
F_{jk}&\leq \Vert \boldsymbol{c}\Vert_\infty\left(\Vert\boldsymbol{\varphi}_k-\tilde{\boldsymbol{\varphi}}_k\Vert_{{L^2}(\mu_N)}\Vert\boldsymbol{\varphi}_j\Vert_{{L^2}(\mu_N)}+\Vert\tilde{\boldsymbol{\varphi}}_k\Vert_{L^2(\mu_N)}\Vert\boldsymbol{\varphi}_j-\tilde{\boldsymbol{\varphi}}_j\Vert_{{L^2}(\mu_N)} \right)\\
&\leq  \Vert\boldsymbol{c}\Vert_\infty\left[O(N^{-\frac{\beta}{4}})+O\left(N^{\frac{-2 \alpha+(n-d)}{4d}}\right)\right].
\end{aligned}
$$
Using \eqref{boundA1}and the bound on each component of $D, E$ and $F$,
\BEA
\Vert \textup{A}-\tilde{\mathbf{A}}\Vert_2 &\leq& C\sqrt{\frac{8}{3}\log(2KN)}\frac{K}{N^{1/2}} +  \sqrt{\Vert \mathbf{A}-\tilde{\mathbf{A}}\Vert_1\Vert \mathbf{A}-\tilde{\mathbf{A}}\Vert_\infty} \notag \\ &=&  O(\log(KN)KN^{-\frac{1}{2}})+O(K{N}^{-\frac{\beta}{4}})+O\left(K N^{\frac{-2 \alpha+(n-d)}{4d}}\right).\label{ErrorA2}
\EEA
Using the same (but simpler) argument as above, one can show that with probability higher than $1-\frac{2 m^{2}+4 m+24}{N}$
$$
\Vert \textup{b}-\tilde{\mathbf{b}} \Vert_2 = O(K^{\frac{1}{2}} N^{-\frac{\beta}{4}})+O\left( K^{\frac{1}{2}} N^{\frac{-2 \alpha+(n-d)}{4d}}\right),
$$
so the perturbation error induced by $\tilde{\mathbf{A}}$ dominates.
By the assumption in \eqref{Anonsingular_assumption} for any $C_{1},C_{2}>0$%
, including the constants in the upper bounds in \eqref{ErrorA2}, one has%
\[
\frac{1}{2}\sigma _{\textup{min}}(\textup{A})>C_{1}K{N}^{-\frac{\beta}{4}%
}+C_{2}KN^{\frac{-2\alpha +(n-d)}{4d}}\geq \Vert \textup{A}-\tilde{\mathbf{A%
}}\Vert _{2},
\]%
as $N\rightarrow \infty $. Here, the $\log (KN)$-term in \eqref{ErrorA2} is
assumed to be small such that the first error term in \eqref{ErrorA2} is
absorbed by the first error term, $C_{1}K{N}^{-\frac{\beta}{4}}$. Let $%
\varepsilon $ be defined in Lemma \ref{perturbation}, and then
the condition therein holds true:%
\[
\varepsilon \textup{cond}(\textup{A})\leq \Vert \textup{A}-\tilde{\mathbf{%
A}}\Vert _{2}\Vert \textup{A}^{-1}\Vert _{2}=\frac{\Vert \textup{A}-\tilde{\mathbf{A}%
}\Vert _{2}}{\sigma _{\textup{min}}(\textup{A})}<\frac{1}{2}<1.
\]%
By Lemma \ref{perturbation}, with
probability higher than $1-\frac{8m^{2}+16m+2n+103}{N}$ (obtained by taking
product of all of the probabilities above),
\[
\Vert \textup{U}-\tilde{\mathbf{U}}\Vert _{2}\leq \frac{2\Vert \textup{U}%
\Vert _{2}\varepsilon \textup{cond}(\textup{A})}{1-\varepsilon \textup{%
cond}(\textup{A})}\leq \frac{4\Vert \textup{U}\Vert _{2}}{\sigma _{%
\textup{min}}(\textup{A})}\left( C_{1}K{N}^{-\frac{\beta}{4}}+C_{2}KN^{\frac{%
-2\alpha +(n-d)}{4d}}\right) .
\]


\end{proof}

We conclude this section with the following error bound in $L^2$ norm.

\begin{thm}\label{error-estimate} Let $u\in H^2(M)$ be a solution of (\ref{weak-form}) and $\tilde{u}^K$ be the solution of Spectral-RBF as in (\ref{spectral-RBF-soln}). Let the assumptions in Lemmas~\ref{lemma_error1}, \ref{lem_stability}, and \ref{lemma_error3} be valid.
Then for any $K>0$, with high probability,
\BEA
\Vert u-\tilde{u}^K\Vert_{L^2(M)}\leq  C\left(\frac{2}{1+\lambda_K}\right)\Vert u\Vert_{H^2(M)} + O\left(K^{\frac{3}{2}}N^{-\frac{\beta}{4}}\right)+O\left(K^{\frac{3}{2}}N^{\frac{-2 \alpha+(n-d)}{4 d}}\right),\label{allbound}
\EEA
as $N\to\infty$.
\end{thm}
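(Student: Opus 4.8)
The plan is to start from the triangle-inequality split already recorded in \eqref{error_estimate}, namely $\Vert u-\tilde{u}^K\Vert_{L^2(M)}\leq \Vert u-u^{K}\Vert_{L^2(M)}+\Vert u^K-\tilde{u}^{K}\Vert_{L^2(M)}$, and to treat the two summands separately. The first (bias) term is exactly the content of Lemma~\ref{lemma_error1}, which already gives $\Vert u-u^K\Vert_{L^2(M)}\leq C\left(\frac{2}{1+\lambda_K}\right)\Vert u\Vert_{H^2(M)}$, producing the first term of \eqref{allbound}. Thus all the remaining work is to bound the variance term $\Vert u^K-\tilde{u}^{K}\Vert_{L^2(M)}$ and to show it contributes the two Monte-Carlo-type rates carrying a $K^{3/2}$ prefactor.

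For the variance term I would use the decomposition \eqref{error_estimate_2}, which separates the error into a \emph{basis-approximation} part $\sum_{k=1}^K|u_k|\,\Vert \varphi_k-\tilde{\varphi}_k\Vert_{L^2(M)}$ and a \emph{coefficient-perturbation} part $\sum_{k=1}^K\Vert \tilde{\varphi}_k\Vert_{L^2(M)}\,|u_k-\tilde{u}_k|$. For the first part, I apply Cauchy--Schwarz in the form $\sum_{k=1}^K|u_k|\leq K^{1/2}\Vert \textup{U}\Vert_2$ and bound each $\Vert \varphi_k-\tilde{\varphi}_k\Vert_{L^2(M)}$ by Lemma~\ref{lem_stability}, giving a contribution of order $K^{1/2}\Vert\textup{U}\Vert_2\big(O(N^{-\beta/4})+O(N^{(-2\alpha+(n-d))/(4d)})\big)$, which will be dominated by the $K^{3/2}$ term. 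Here I record that since $\{\varphi_k\}$ is $L^2(M)$-orthonormal (in the uniform case $q\propto 1$), $\Vert\textup{U}\Vert_2=\Vert u^K\Vert_{L^2(M)}\leq \Vert u\Vert_{L^2(M)}+\Vert u-u^K\Vert_{L^2(M)}$ is uniformly bounded by $\Vert u\Vert_{H^2(M)}$, so this factor is harmless.

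For the coefficient-perturbation part, the key input is Lemma~\ref{lemma_error3}, which under the nonsingularity hypothesis \eqref{Anonsingular_assumption} yields $\Vert \textup{U}-\tilde{\mathbf{U}}\Vert_2\leq \frac{\Vert\textup{U}\Vert_2}{\sigma_{\textup{min}}(\textup{A})}\big(C_1KN^{-\beta/4}+C_2KN^{(-2\alpha+(n-d))/(4d)}\big)$. Applying Cauchy--Schwarz once more, $\sum_{k=1}^K|u_k-\tilde{u}_k|\leq K^{1/2}\Vert\textup{U}-\tilde{\mathbf{U}}\Vert_2$, and combining the resulting $K^{1/2}$ with the factor $K$ already present in Lemma~\ref{lemma_error3} produces exactly the $K^{1/2}\cdot K=K^{3/2}$ prefactor of \eqref{allbound}. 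To close this step I must verify that the weights $\Vert\tilde{\varphi}_k\Vert_{L^2(M)}$ are uniformly bounded; this follows because $\tilde{\varphi}_k=I_{\phi_s}\tilde{\boldsymbol{\varphi}}_k$ with $\tilde{\boldsymbol{\varphi}}_k$ normalized in $L^2(\mu_N)$, so the triangle inequality together with Lemma~\ref{lem_stability} and the $L^2(\mu_N)$-to-$L^2(M)$ comparison via Hoeffding gives $\Vert\tilde{\varphi}_k\Vert_{L^2(M)}=1+o(1)$. Collecting the two parts gives $\Vert u^K-\tilde{u}^K\Vert_{L^2(M)}=O(K^{3/2}N^{-\beta/4})+O(K^{3/2}N^{(-2\alpha+(n-d))/(4d)})$, which with the bias bound yields \eqref{allbound}.

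The main obstacle I anticipate is bookkeeping rather than analysis. Each constituent estimate (Lemmas~\ref{lem_stability} and \ref{lemma_error3}, and the comparison of $\Vert\tilde{\varphi}_k\Vert_{L^2(M)}$ with unity) holds only on a high-probability event, so the final claim requires a union bound across $k=1,\ldots,K$ and across the matrix- and vector-perturbation events; I must check that the accumulated failure probabilities still aggregate to a $1-O(N^{-1})$ statement, consistent with the constants of the form $\frac{8m^2+16m+2n+103}{N}$ already advertised in Lemma~\ref{lemma_error3}. I also need to confirm that the $\log(KN)$ factor hidden in \eqref{ErrorA2} is genuinely absorbed into $N^{-\beta/4}$, which implicitly constrains $K$ to grow subpolynomially relative to the Monte-Carlo rate so that the $K^{3/2}$ prefactor does not overwhelm the decay in $N$.
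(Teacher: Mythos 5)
Your proposal is correct and follows essentially the same route as the paper's proof: the split \eqref{error_estimate} with Lemma~\ref{lemma_error1} for the bias, the decomposition \eqref{error_estimate_2}, Lemma~\ref{lem_stability} for the basis error, the bound $\Vert\tilde{\varphi}_k\Vert_{L^2(M)}\leq 1+o(1)$, and Cauchy--Schwarz combined with Lemma~\ref{lemma_error3} to produce the $K^{3/2}$ prefactor. The only cosmetic difference is that the paper bounds the basis-error sum via $\Vert\textup{U}\Vert_\infty\sum_k\Vert\varphi_k-\tilde{\varphi}_k\Vert_{L^2(M)}$ rather than your Cauchy--Schwarz variant; both are dominated by the $K^{3/2}$ terms.
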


\begin{proof}
We first recall from previous definitions that $u^K$ and $\tilde{u}^K$ could be written as $u^K=\sum_{k=1}^Ku_k\varphi_k$ and  $\tilde{u}^K=\sum_{k=1}^K\tilde{u}_k\tilde{\varphi}_k$, where $\{(\varphi_k,\lambda_k)\}_{k=1}^K$ are eigen-pairs of $\Delta_M$ with increasing ordered $\lambda_k$ and $\tilde{\varphi}_k=I_{\phi_s}\tilde{\boldsymbol{\varphi}}_k$ are their numerical approximations. We deduce the error bound in \eqref{allbound} by inserting the three error bounds Lemmas~\ref{lemma_error1}, \ref{lem_stability}, and \ref{lemma_error3} into \eqref{error_estimate} and \eqref{error_estimate_2},
\BEA
\Vert u-\tilde{u}^K\Vert_{L^2(M)}&\leq& \Vert u-u^{K}\Vert_{L^2(M)}+\Vert u^K-\tilde{u}^{K}\Vert_{L^2(M)}\notag \\
&\leq& \Vert u-u^{K}\Vert_{L^2(M)}+ \sum_{k=1}^K|u_k|\Vert \varphi_k-\tilde{\varphi}_k\Vert_{L^2(M)}+\sum_{k=1}^K\Vert \tilde{\varphi}_k\Vert_{L^2(M)}|u_k-\tilde{u}_k| \notag \\
&\leq& \Vert u-u^{K}\Vert_{L^2(M)}+  \|\textup{U}\|_\infty \sum_{k=1}^K\Vert \varphi_k-\tilde{\varphi}_k\Vert_{L^2(M)}+\left(1+ O\left(N^{-\frac{\beta}{4}}\right)+O\left(N^{\frac{-2 \alpha+(n-d)}{4d}}\right)\right)\sum_{k=1}^K |u_k-\tilde{u}_k| \notag\\
&\leq& \Vert u-u^{K}\Vert_{L^2(M)}+  \|\textup{U}\|_\infty \sum_{k=1}^K\Vert \varphi_k-\tilde{\varphi}_k\Vert_{L^2(M)}+\left(1+ O\left(N^{-\frac{\beta}{4}}\right)+O\left(N^{\frac{-2 \alpha+(n-d)}{4d}}\right)\right) \sqrt{K} \| \textup{U} - \tilde{\mathbf{U}} \|_2, \notag
\EEA
where we have used the Cauchy–Schwarz inequality in the last inequality.
In the above, we have used the estimate
\[
\Vert \tilde{\varphi}_k\Vert_{L^2(M)} \leq \Vert {\varphi}_k\Vert_{L^2(M)} + \Vert \varphi_k-\tilde{\varphi}_k\Vert_{L^2(M)} \leq 1 +  O\left(N^{-\frac{\beta}{4}}\right)+O\left(N^{\frac{-2 \alpha+(n-d)}{4d}}\right),
\]
that used the orthonormality of $\varphi_k$ in $L^2(M)$ and error bound in Lemma \ref{lem_stability}.
\end{proof}

\begin{rem} There are two important aspects we should mention here.
\begin{enumerate}
\item For $d$-dimensional closed manifolds, the spectrum of Laplace-Beltrami scales as $\lambda_K\sim K^{\frac{2}{d}}$ \cite{colbois2013eigenvalues}. Comparing the last three error rates in \eqref{allbound}, one can always choose $\alpha>n/2$ such that the dominant error is of order-$K^{\frac{3}{2}}N^{-\frac{\beta}{4}}$. Choosing $K \sim N^{\frac{\beta d}{6d+8}}$ such that  $K^{\frac{3}{2}}N^{-\frac{\beta}{4}}=K^{-\frac{2}{d}}$, we achieve an error rate $K^{-\frac{2}{d}} \sim N^{-\frac{\beta}{4+3d}}$.
\item In the discussion above, we did not account for the error induced by the estimator $\hat{\mathbf{P}}$ discussed in Section~\ref{sec2.4}. Based on the convergence study in \cite{harlim2022rbf}, we will inherit an additional error rate of order-$N^{-2/d}$ from the second-order local SVD method, which will be negligible compared to the error rates in \eqref{allbound} when $d\leq 8/\beta$.
\end{enumerate}
\end{rem}

\section{Numerical study}\label{section_numeric}

In this section, we will test our method proposed in Section \ref{sec3.1} which we referred as Spectral-RBF. We compare them with graph Laplacian based PDE solver and the direct RBF solver. We will organize the discussion as follows: In Section~\ref{sec4.1}, we report some implementation details and discuss the alternative approaches that we are comparing with. In Section~\ref{sec4.2}-\ref{section_bunny}, we report results on a 1D ellipse embedded in $\mathbb{R}^2$, 2D torus embedded in $\mathbb{R}^3$, and an unknown surface, the Stanford Bunny.

\subsection{Experimental design}\label{sec4.1}
In this section, we discuss several practical issues to be considered in the implementation. The first practical issue is in taking the inverse of the RBF interpolation matrix $\boldsymbol{\Phi}$ which is used in (\ref{diff-op}) to define the differential matrix, $\mathbf{G}_\ell$. As mentioned in Section~\ref{review-RBF}, we only apply inverse quadratic kernel to test the numerical performances. In general, a nearly flat (small shape parameter) basis function in RBF interpolation is preferred to obtain the best accuracy over a set of node points. However, the interpolation matrix $\boldsymbol{\Phi}$ defined in (\ref{RBF-inter}) becomes dense and close to singular as shape parameter $s$ decreases and as the training data point increases. This last fact can be quantified as follows.
\begin{prop}\label{prop:unstable}
Let $\Phi_s$ be the inverse quadratic kernel and the matrix $\boldsymbol{\Phi}$ be defined with components $\boldsymbol{\Phi}_{i,j}:=\Phi_s(\|\mathbf{x}_i-\mathbf{x}_j\|_2)$, where the nodes in $X:=\{\mathbf{x}_i\}_{i=1,\ldots, N}$ are uniformly i.i.d. samples on $M$, i.e., the sampling density defined with respect to the volume measure is constant, $q \propto 1$. Assume also that $M$ has a finite scalar Ricci curvature on every element of $X$. Then with probability higher than $1-\frac{2}{N}$, the smallest eigenvalue is bounded from below as,
\BEA
\hat{\lambda}_{\textup{min}}(\boldsymbol{\Phi})\geq C\exp(-\hat{C}(d)N^{2/d}). \label{lambda_lb}
\EEA
\end{prop}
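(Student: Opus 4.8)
The plan is to reduce the statement to a lower bound on the \emph{separation distance} of the point cloud and then to estimate that separation distance probabilistically. Write $q_X := \tfrac12\min_{i\neq j}\|\mathbf{x}_i-\mathbf{x}_j\|$ for the separation distance measured in the ambient metric. First I would invoke the classical lower bounds for the smallest eigenvalue of a positive-definite RBF interpolation matrix in terms of $q_X$ and the decay of the (generalized) Fourier transform of the kernel (the Narcowich--Ward estimates, Chapter~12 of \cite{Wendland2005Scat}). Because the inverse quadratic $\Phi_s(r)=(1+(sr)^2)^{-1}$ is strictly positive definite on every $\mathbb{R}^n$, the matrix $\boldsymbol{\Phi}$ is a genuine ambient-space RBF matrix for the distinct nodes $X\subset\mathbb{R}^n$, so these estimates apply verbatim with the separation taken in the Euclidean ambient distance. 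The role of the manifold is then confined entirely to the probabilistic estimate of $q_X$.

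Second, I would make the Fourier decay explicit. The generalized Fourier transform of the inverse quadratic on $\mathbb{R}^n$ is, up to normalizing constants, $\widehat{\Phi_s}(\xi)\asymp |\xi|^{\theta}K_{n/2-1}(|\xi|/s)$, a modified Bessel function which for large argument behaves like $e^{-|\xi|/s}$ up to algebraic factors. Substituting the frequency $|\xi|\sim 1/q_X$ dictated by the eigenvalue bound gives
\[
\hat{\lambda}_{\textup{min}}(\boldsymbol{\Phi}) \geq C_1\, q_X^{-a}\exp\!\big(-C_2/(s\,q_X)\big),
\]
for constants $a,C_2$ depending on $n$; the exponential factor dominates, so $\hat{\lambda}_{\textup{min}}(\boldsymbol{\Phi})\geq C\exp(-\tilde{C}/q_X)$. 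Thus everything reduces to a high-probability \emph{lower} bound on $q_X$, i.e.\ showing the nodes cannot be too close.

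Third, and most delicate, I would estimate $q_X$. For $N$ points sampled i.i.d.\ uniformly on $M$ the separation distance concentrates near the scale $N^{-2/d}$. A union bound over the $\binom{N}{2}$ pairs gives
\[
\mathbb{P}\big(q_X < r\big) \le \binom{N}{2}\, \sup_{x\in M}\mu\big(B(x,2r)\cap M\big),
\]
and the finite-curvature hypothesis together with the Bishop--Gromov volume comparison theorem yields $\mu(B(x,2r)\cap M)\le C r^d$ for small $r$, with $C$ uniform over $M$ and depending only on $d$ and the curvature bound (here one also uses that for nearby points the ambient ball is comparable to a geodesic ball). Calibrating $r$ to push the right-hand side below $2/N$ and inserting $1/q_X\lesssim N^{2/d}$ into the bound of the second step produces the claimed $\hat{\lambda}_{\textup{min}}(\boldsymbol{\Phi})\geq C\exp(-\hat{C}(d)N^{2/d})$.

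The main obstacle is precisely this separation-distance estimate. The subtle point is that the crude union bound trades the guaranteed scale of $q_X$ against the demanded confidence level: demanding success probability $1-\tfrac2N$ tends to force $r$ to a finer scale than the typical spacing $N^{-2/d}$, so matching both the exponent $N^{2/d}$ and the probability $1-\tfrac2N$ requires carefully tracking the dimension-dependent volume-comparison constants (which is exactly where the Ricci curvature hypothesis enters, guaranteeing the $r^d$ volume law with $N$-independent constants across all sampled points) and, if needed, a sharper spacing argument than the first-moment union bound. The remaining ingredients---the positive-definiteness that legitimizes the ambient Narcowich--Ward bound and the exponential Bessel asymptotics---are standard and I would treat them as citations rather than rederive them.
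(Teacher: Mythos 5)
Your overall route is the same as the paper's: reduce $\hat{\lambda}_{\textup{min}}(\boldsymbol{\Phi})$ to the separation distance via the Narcowich--Ward-type bounds in Chapter~12 of \cite{Wendland2005Scat} (the paper simply reads the entry $\hat{\lambda}_{\textup{min}}(\boldsymbol{\Phi})\geq C\exp(-Cq_{X,\mathbb{R}^n}^{-1})$ for the inverse quadratic off Table~12.1 rather than rederiving it from Bessel asymptotics), identify the ambient and geodesic separation distances for large $N$, and then lower-bound the separation distance in probability using a union bound together with a small-ball volume upper bound $\textup{Vol}(B_\delta(\mathbf{x}))\leq C(d)\delta^d$; the paper gets that volume bound from Gray's expansion of the volume of small geodesic balls where you invoke Bishop--Gromov, but both deliver the same estimate and the curvature hypothesis plays the same role in each.

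The one place you stop short is exactly the place you flag: the probabilistic lower bound on the separation distance. Your union bound over $\binom{N}{2}$ pairs gives $\mathbb{P}(q_X<r)\lesssim N^2 r^d$, and calibrating this to $2/N$ forces $r\sim N^{-3/d}$, hence only $\hat{\lambda}_{\textup{min}}(\boldsymbol{\Phi})\geq C\exp(-\hat{C}(d)N^{3/d})$ rather than the stated $N^{2/d}$. You correctly observe that the confidence level $1-\tfrac{2}{N}$ is in tension with the typical (birthday-problem) scale $N^{-2/d}$ of the separation distance, but you defer to ``a sharper spacing argument'' without supplying one, so as written your proof establishes \eqref{lambda_lb} only with $N^{2/d}$ replaced by $N^{3/d}$. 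For comparison, the paper reaches $N^{-2/d}$ by a union bound over the $N$ centers only, bounding $\mathbb{P}[\exists\, j\neq i:\ d_g(\mathbf{x}_i,\mathbf{x}_j)<2\delta]$ by $C(d)\delta^d/\textup{Vol}(M)$ as if there were a single competitor $j$, and then setting $NC(d)\delta^d/\textup{Vol}(M)=1/N$; taken literally this drops a factor of $N$, and the fully careful version of either argument lands on the exponent $3/d$. Since the proposition is a negative (ill-conditioning) statement, the qualitative conclusion is unaffected by which power of $N$ sits in the exponent, but to match the statement exactly you must either prove the sharper spacing estimate you allude to or accept the weaker exponent.
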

We leave the proof in Appendix~\ref{AppA}. This negative result implies that the condition number grows exponentially in $N$, which yields an unstable interpolator, {\color{black}or, practically, an ill-conditioned linear problem \eqref{RBF-inter}.}

While there exist several stable algorithms \cite{fasshauer2009preconditioning,fornberg2011stable,larsson2013stable} to overcome this ill-conditioned problem, they could not be generalized easily to RBF interpolation over a general manifold. For example, in \cite{fornberg2008stable}, the authors proposed a method which is called RBF-QR to eliminate ill-conditioning in the case when the data points lie on the surface of a sphere. There,  the idea is to rewrite the RBF basis in terms of spherical harmonic basis such that they could span the same space. However, a complete orthonormal basis on a general manifold is not accessible. To overcome this issue, we implement a simple pseudo-inversion based on SVD to deal with the ill-conditioning problem \cite{chen2006some}. Given an $N\times N$ symmetric matrix $\boldsymbol{\Phi}$, the singular value decomposition is given as
$
\boldsymbol{\Phi}=\mathbf{V}\boldsymbol{\Sigma}\mathbf{V}^\top
$
where $\mathbf{V}$ is an orthogonal matrix and $\boldsymbol{\Sigma}$ is a diagonal matrix with singular values (or eigenvalues in this case) $\hat{\lambda}_1\geq\hat{\lambda}_2\geq\cdots\geq \hat{\lambda}_N\geq0$.  Then we can set all terms with $\hat{\lambda}_j\leq\tau$ to be $0$ for $j=J+1,\ldots, N$ in the diagonal matrix $\boldsymbol{\Sigma}$ for some tolerance parameter $\tau>0$. Denote the resulting matrix as $\boldsymbol{\Sigma}_J \in\mathbb{R}^{J\times J}$ and the matrix consists of the first $J$-columns of $\mathbf{V}$ as $\mathbf{V}_J$. Then the inverse of matrix $\boldsymbol{\Phi}$ can be approximated as,
$$
\boldsymbol{\Phi}^{-1}=\mathbf{V}\boldsymbol{\Sigma}^{-1}\mathbf{V}^\top\approx \mathbf{V}_J\boldsymbol{\Sigma}_J^{-1}\mathbf{V}_J^\top.
$$
In MATLAB, this could be easily implemented with the built-in pseudoinverse function ${\asciifamily pinv.m}$. In practice, we usually choose the threshold $\tau=10^{-6}$. We should point out that this SVD pseudo-inversion does not translate to an interpolation solution.  {\color{black} It is also common in statistical literature \cite{cutajar2016preconditioning} to overcome this problem with a ridge regression solution obtained by replacing $\boldsymbol{\Phi}$ by $\boldsymbol{\Phi}+\sigma \mathbf{I}$ where $\sigma$ is a small positive number and combining it with a preconditioning matrix. We will find this approach to be more robust than pseudo-inverse for the local radial basis interpolation.}

The second practical issue is in choosing the shape parameter such that a good accuracy is achieved. It is well known that, the condition number of the interpolation matrix $\boldsymbol{\Phi}$ grows almost exponentially as the shape parameter $s$ decays to zero \cite{fornberg2007runge}. On the other hand, accurate solutions tend to correspond to a small parameter value $s$ before reaching the regime where the error grows almost exponentially. Thus, a good balance between accuracy and stability is desired. In \cite{fasshauer2007choosing}, the authors apply the leave-one-out cross validation approach to choose the "optimal" shape parameter in RBF interpolation matrix and differential matrix. In \cite{shankar2015radial}, the shape parameter $s$ is chosen such that the condition number of the matrix $\boldsymbol{\Phi}$ is kept almost {\color{black}the} same as $N$ grows. In this paper, we tune the shape parameter by hand and fix the parameter for increasing $N$ to check the convergence rate in numerical examples.

The third issue is related to solving {\color{black}the} eigenvalue problem of $\Delta_X^{\mathrm{SRBF}}=\sum_{\ell=1}^n\mathbf{G}_{\ell}^\top \mathbf{G}_{\ell}$ in (\ref{sym_eigs}). Numerically, since the approximate matrix $\Delta_X^{\mathrm{SRBF}}$ are low rank, these matrices contain many irrelevant eigenvalues that are very close to zero, and thus, create a practical issue in estimating the nontrivial leading $K$ spectra, $0=\lambda_1<\lambda_2 \leq \lambda_3 \leq  \ldots \leq \lambda_K$. One way to overcome this issue is to use the rank of $\boldsymbol{\Phi}$ in \eqref{RBF-inter} as an upper bound for the rank of $\Delta_X^{\mathrm{SRBF}}$ in order to approximate the leading  nontrivial eigenvalues. Denoting $r={\asciifamily rank}(\boldsymbol{\Phi},\tau)$ based on the tolerance parameter used in the pseudo-inversion of $\boldsymbol{\Phi}$, we find the eigensolutions
corresponding to the $r$ largest-magnitude numerical eigenvalues. Setting  $\tau_{\mathrm{eig}}=10^{-4}$, we pick the $K$-smallest nontrivial eigenvalues with their magnitudes larger than this threshold $\tau_{\mathrm{eig}}$ out of the $r$ eigensolutions. Then we set the trivial eigenvalue to be zero and the corresponding eigenfunction to be a constant vector $\mathbf{1}$.

\comment{\color{magenta}
Numerically, a lot of eigenvalues of $\Delta_X^{RBF}$ and  $\Delta_X^{SRBF}$ clustered around $0$. To remove those "fake" eigenvalues and eigenvectors, we calculate $N_{\tau_{eig}}$ eigenvalues starting from the largest magnitude where $N_{\tau_{eig}}$ is the number of singular values of $\Delta_X^{RBF}$ or  $\Delta_X^{SRBF}$  that are larger than $\tau_{eig}$. In practice, we take $\tau_{eig}=10^{-4}$. Usually, the resulting eigenvalues still have several eigenvalues close to $0$. In such case, we will remove the eigenvalue that are smaller than $\tau_{1}$. Here, $\tau_1$ is chosen to be smaller than the first non-zero eigenvalue of the Laplacian $\Delta_M$. In our numerical example, we apply Variable Bandwidth Diffusion Map to estimate the first eigenvalue. In the end, we manually set the first eigenvalue to be $0$ and first eigenfunction to be constant one.
Above procedure is outlined in the following steps:
\begin{enumerate}[(a)]
\item Apply SVD to the matrix $L$ which could be either $\Delta_X^{RBF}$ or $\Delta_X^{SRBF}$ such that $L=UDV^T$ where $U$ and $V$ are orthogonal matrices and $D$ is a diagonal matrix with singular values $\sigma_1\geq\sigma_2\geq\cdots\geq \sigma_N\geq0$.
\item Let $N_{\tau_{eig}}$ be the number of singular values of $L$  that are larger than $\tau_{eig}$.
\item Find $N_{\tau_{eig}}$ eigenpairs of $L$ starting from the largest magnitude.
\item Remove  the eigenvalues that are smaller than $\tau_{1}$.
\item Set the first eigenvalue to be $0$ and first eigenfunction to be constant one.
\end{enumerate}

{\color{black}

I thought you can just use the rank of $\boldsymbol{\Phi}$ in \eqref{RBF-inter} to determine the rank of $\mathbf{G}$ by the definition in \eqref{G_ell}. Given the this rank $r$, one can just compute the $r+1$ eigensolutions that are closest to the largest spectrum. This procedure is straightforward for the symmetric approximation since all eigenvalues are real and non-negative. For the non-symmetric approximation, one has to manually remove the complex-valued estimates which is a practical disadvantage.} {\color{black} In practice, the rank of $\boldsymbol{\Phi}$ is a bit  larger than the rank of $\Delta^{RBF}$ and here the "rank" is indeed an estimated number. The built-in matlab function rank(A,tol) returns the number of singular values of A that are larger than tol. So, in the code, we can take $r=rank(\boldsymbol{\Phi},tol)$, or just $r=rank(\boldsymbol{\Phi})$ using the the default tolerance. Step (d) is needed, since some of the resulting eigenvalues of non-sym and sym formulations are still very close to $0$. }

}

\comment{\color{black} Maybe move to the next two sections:
\par Unless otherwise stated, in the following numerical examples, we will quantify the accuracy of the solutions from the proposed schemes
with the standard max-norm,
\BEA
\ell^\infty\text{-error}=\max\limits_{1\leq i\leq N}|u(\textbf{x}_i)-\tilde{u}^K(\textbf{x}_i)|,\label{maxerror}
\EEA
where $u$ denotes the analytical solutions when they are available on the examples in Sections~\ref{sec4.2}-\ref{sec4.3}. When analytical solution is not available as in the example in Section~\ref{section_bunny}, $u$ will be approximated using the surface Finite Element Method, obtained by using the FELICITY FEM Matlab toolbox \cite{walker2018felicity}. In \eqref{maxerror}, $\tilde{u}^K$ denotes the approximate solutions from the non-symmetric and symmetric RBF formulations. For other solvers (such as the two discussed below), the same evaluation metric \eqref{maxerror} is also used with $\tilde{u}^K$ replaced by the corresponding estimates. }

For diagnostic, {\color{black}we compare the proposed Spectral-RBF discussed in Section~\ref{sec3.1} with the following} PDE solvers:
\par \textbf{Direct-RBF:} This scheme is basically the RBF pointwise approximation as discussed in Section~\ref{sec2.2}. Using the differentiation matrices in RBF approximation of gradient and Laplacian operators on the training data set $X$, we obtain the $N\times N$ approximate discrete linear system,
$$
\begin{aligned}
f(\textbf{x}_i) &=-\textup{div}_g\left(\kappa \textup{grad}_g u(\textbf{x}_i)\right)+c(\textbf{x}_i)u(\textbf{x}_i)\\
&=-\sum_{\ell=1}^n\left(\mathbf{G}_{\ell} \boldsymbol{\kappa})_i (\mathbf{G}_{\ell}\mathbf{u}\right)_i-\kappa(\textbf{x}_i)\left(\sum_{\ell=1}^n(\mathbf{G}_{\ell} \mathbf{G}_{\ell} \mathbf{u})_i\right)+c(\textbf{x}_i) u(\textbf{x}_i),\quad\text{for }i=1,...,N.
\end{aligned}
$$
The computational cost is dominated by direct inversion of this linear problem, which is order $O(N^3)$. As we mentioned previously (see the last paragraph in Section~\ref{sec3.1}), this solver can be numerically expensive if one needs to solve the PDE many times for different choices of $\kappa$ since the construction depends on $\kappa$.
In some literature, {\color{black}the} above approach is also called the  RBF-pseudo-spectral approach \cite{fasshauer2007choosing}.

{\color{black}\par \textbf{Direct-RBF-FD:} In one of the examples below, we will test the RBF-FD approximation as proposed in \cite{shankar2015radial,lehto2017radial}. This approach is also a pointwise approximation since it is essentially applying Direct-RBF with local RBF interpolation. Since the approximate operator is a sparse matrix, the computational cost is of order $O(N\log N)$ operations. In our implementation, we employ a ridge regression solution $(\Phi+\sigma\mathbf{I})^{-1}$ with $\sigma=10^{-6}/N$. We provide a review of the RBF-FD approximation in Appendix~\ref{AppB}.

\par \textbf{Spectral-RBF-FD:} In one of the same examples below, we also employ the spectral-RBF with the RBF-FD approximation in realizing steps (4)-(6) in the procedure described in Section~\ref{sec3.1}. That is, we approximate differential operators $\mathbf{G}_\ell$ and $\Delta_X^{\mathrm{RBF}}$ using the RBF-FD method. Here, we still use eigenvectors obtained by the global RBF interpolation in step (3) since we cannot obtain any convergence solution in solving the eigenvalue problem in \eqref{sym_eigs} with the RBF-FD.
}

\par \textbf{Variable Bandwidth Diffusion Map (VBDM):} In \cite{gh2019,harlim2020kernel,jiang2020ghost},  they consider a kernel approximation to the differential operator $\mathcal{L}^\kappa :=-\text{div}_g(\kappa\textup{grad}_g \,)$ with a fixed-bandwidth Gaussian kernel. For more accurate estimation when the data is non-uniformly distributed, we extend this graph-Laplacian approximation using the variable bandwidth kernel \cite{bh:16vb}.

Following the pointwise estimation method in \cite{liang2021solving}, we construct our estimator by choosing the kernel bandwidth to be inversely proportional to the sampling density $q$. Since the sampling density is usually unknown, we
first need to approximate it. While there are many ways to estimate density, in our algorithm we employ kernel density estimation with the following kernel that is closely related to the cKNN \cite{berry2019consistent} and self-tuning kernel \cite{zelnik2004self},
\[
K_{\epsilon ,0 }(\mathbf{x},\mathbf{y})=\exp \left( -\frac{\left\Vert
\mathbf{x}-\mathbf{y}\right\Vert^2 }{2\epsilon \rho_0(\mathbf{x})  \rho_0(\mathbf{y}) }\right),
\]
where $\rho_0(\mathbf{x}) := \Big(\frac{1}{k_2-1}\sum_{j=2}^{k_2} \|\mathbf{x} - \mathbf{x}_j \|^2\Big)^{1/2}$
denotes the average distance of $\mathbf{x}$ to the first $k_2$-nearest neighbors $\{\mathbf{x}_j\}, j=2,\ldots k_2$ excluding itself. Using this kernel, the sampling
density $q(\mathbf{x})$\ is estimated by $Q (\mathbf{x})=\sum_{j=1}^{N}K_{\epsilon ,0
}(\mathbf{x},\mathbf{x}_{j})/\rho_0( \mathbf{x})^{d}$\ at given point
cloud data. For our purpose, let us give a quick overview of the estimation of the Laplace-Beltrami operator. In this case, one chooses the bandwidth function to be $\rho(\mathbf{x})= q(\mathbf{x})^{\beta} \approx Q(\mathbf{x})^{\beta}$, with $\beta =-1/2$. With this bandwidth function, we {\color{black} employ the DM algebraic steps to construct  $K_{\epsilon,\rho}= \exp\left(-\Vert\mathbf{x}-\mathbf{y}\Vert^2/(4\epsilon\rho(\mathbf{x})\rho(\mathbf{y}))\right) $} and define $Q_\rho (\mathbf{x})=\sum_{j=1}^{N}K_{\epsilon ,\rho
}(\mathbf{x},\mathbf{x}_{j})/\rho( \mathbf{x})^{d}$. Then, we remove the sampling bias by applying a right
normalization $K_{\epsilon ,\rho ,\alpha }(\mathbf{x}_{i},\mathbf{x}%
_{j})= \frac{K_{\epsilon ,\rho }(\mathbf{x}_{i},\mathbf{x}_{j})}{Q_{\rho }(\mathbf{x}%
_{i})^{\alpha }Q_{\rho }(\mathbf{x}_{j})^{\alpha }}$,
where $\alpha = -d/4+1/2$
. We refer to \cite{harlim:18,bh:16vb} for more
details about the VBDM estimator of weighted Laplacian with other choices of $\alpha$ and $\beta$. Define diagonal
matrices $\mathbf{Q}$ and $\mathbf{P}$ with entries $\mathbf{Q}_{ii}=Q_{\rho
}(\mathbf{x}_{i})$ and $\mathbf{P}_{ii}=\rho (\mathbf{x}_{i})$,
respectively, and also define the symmetric matrix $\mathbf{K}$ with entries
$\mathbf{K}_{ij}=K_{\epsilon ,\rho ,\alpha }(\mathbf{x}_{i},\mathbf{x}_{j})$%
. Next, one can obtain the variable bandwidth diffusion map (VBDM)
estimator, $\mathbf{L}_{\epsilon ,\rho }:=\mathbf{P}^{-2}(\mathbf{D}^{-1}%
\mathbf{K-I})/\epsilon $, where $\mathbf{I}$\ is an identity matrix, as a discrete estimator to the Laplace-Beltrami operator in high probability. For computational efficiency, we also set $k_1$ as the nearest neighbor parameter for constructing $\mathbf{K}_{ij}$ (or eventually $\mathbf{L}_{\epsilon ,\rho }$).
We will report the specific choices for $k_1, k_2$ in each example. As for the choice of parameter $\epsilon$, we simply use the automated tuning technique that is found to be robust for variable bandwidth kernels (see Section~5 of \cite{bh:16vb}).

\subsection{PDE on an Ellipse}\label{sec4.2}
We first consider solving the elliptic equation (\ref{eq1}) on an ellipse which is defined with the usual embedding function,
$$
\iota(\theta)=(\cos \theta, a \sin \theta)^{\top}, \qquad \theta\in[0,2\pi),
$$
and for arbitrary $a>0$. In this example we choose $a=2$. The induced Riemmannian metric is
$$
g_{\textbf{x}^{-1}(\theta)}(v, w)=v^{\top}(\sin^2\theta+a^2\cos^2\theta) w, \quad \forall v, w \in T_{\textbf{x}^{-1}(\theta)} M.
$$
\par In this numerical example, the true solution $u$ is set to be $u(\theta)=\sin^2\theta$, the diffusion coefficient is defined to be $\kappa(\theta)=1.1+\sin ^{2} \theta$ and the constant $c=1$. The forcing term $f$ on the right side of  (\ref{eq1}) is computed explicitly by
$$
\begin{aligned}
f:=-\operatorname{div}_g(\kappa \textup{grad}_g u)+c u&=-\frac{1}{\sqrt{|g|}}\frac{\partial}{\partial \theta}\left(\kappa \sqrt{|g|} g^{11} \frac{\partial u}{\partial \theta}\right)+c u.\\
\end{aligned}
$$

Numerically, the grid points $\{\theta_i\}$ are randomly sampled from the uniform distribution on $[0,2\pi)$. To show the robustness of the results, we will show the result of 15 trials, each trial corresponds to an independent randomly sampled training data $X$ of size $N$. To illustrate the convergence of solutions over the number of points $N=[400,800,1600,3200,6400]$, we fix the shape parameter $s$ used in Spectral-RBF, Direct-RBF, {\color{black}Spectral-RBF-FD, and Direct-RBF-FD.} In particular, we choose $s=1.2$ for Direct-RBF. {\color{black}For Direct-RBF-FD and Spectral-RBF-FD, we use the Polyharmonic spline (PHS) kernel including linear terms (see Appendix~\ref{AppB} for details).} For Spectral-RBF, {\color{black}the} shape parameter is chosen as $s=1$ for solving {\color{black}the} eigenvalue problem of $\sum_{\ell=1}^n\mathbf{G}_{\ell}^\top\mathbf{G}_{\ell}$ and $s=1.2$ for construction of $\Delta_X^{\mathrm{RBF}}=\sum_{\ell=1}^n\mathbf{G}_{\ell}\mathbf{G}_{\ell}$. In VBDM, we choose $k_2=[10,15,25,35,50]$ for density estimation and $k_1=[20,30,45,65,120]$ nearest neighbors to construct the estimator $\mathbf{L}_{\epsilon,\rho}$.

To avoid the high computational cost in finding the eigenpairs of $\Delta_X^{\mathrm{SRBF}}$, we solve only one eigenvalue problem corresponding to one of the 15 randomly generated sample sets, $X$, and then apply the RBF interpolation to extend the estimated eigenvectors onto the function values on the grid points in the other 14 trials. Here, we assume that the error from RBF interpolation is negligible. If one solves the eigenvalue problem for each trial, a better result could be obtained but it will cost much more time in the experiment.

 In Fig.~\ref{fig-ellipse}(a), we show the average of $\ell^\infty$-error over the 15 independent trials. Here, the $\ell^\infty$-error corresponds to the usual vector uniform norm between the truth and the estimated solution attained from the corresponding methods, where the maximum is taken over $X$. The number of eigenfunctions used in Spectral-RBF is 60. To estimate the projection matrix, we choose $k=\sqrt{N}$ nearest neighbors in the 2nd-order local SVD. One can see that the average error decreases roughly on the order of $N^{-2}$ for Spectral-RBF, Spectral-RBF-FD, and Direct-RBF, {\color{black} which are better than Direct-RBF-FD and VBDM whose average error has an order of $N^{-1}$. We should point out that the estimate with Direct-RBF-FD can be improved to order-$N^{-2}$ with accuracy slightly better than Direct-RBF when the Mat\'ern kernel is used (see Figure~\ref{ellipse_matern} in Appendix~\ref{AppB}), which suggests that the local interpolation can be sensitive to the choice of kernel and possibly the distribution of the point cloud data. In addition, Fig.~\ref{fig-ellipse}(a) shows that the Spectral-RBF is about five times more accurate compared to both Spectral-RBF-FD and Direct-RBF.  We also found that the accuracy of Spectral-RBF-FD does not improve even if one uses Mat\'ern kernel as shown in Figure~\ref{ellipse_matern} in Appendix~\ref{AppB}.} In Fig.~\ref{fig-ellipse}(b), we show the average error as a function of the number of eigenfunctions, $K$, used in Spectral-RBF. {\color{black}One can see that each error corresponding to the case $N=[400,800,1600]$ decreases significantly and is plateauing as large enough modes, $K \geq 42$, are used.} In this regime ($K>42$), the error decreases as $N$ increases (confirming the $N^{-2}$ empirical convergence shown in Figure~\ref{fig-ellipse}).

   \begin{figure}[htbp]%
    \centering
      \begin{subfigure}[h]{0.45\linewidth}
    \caption{Error vs $N$.}
\includegraphics[width=\linewidth]{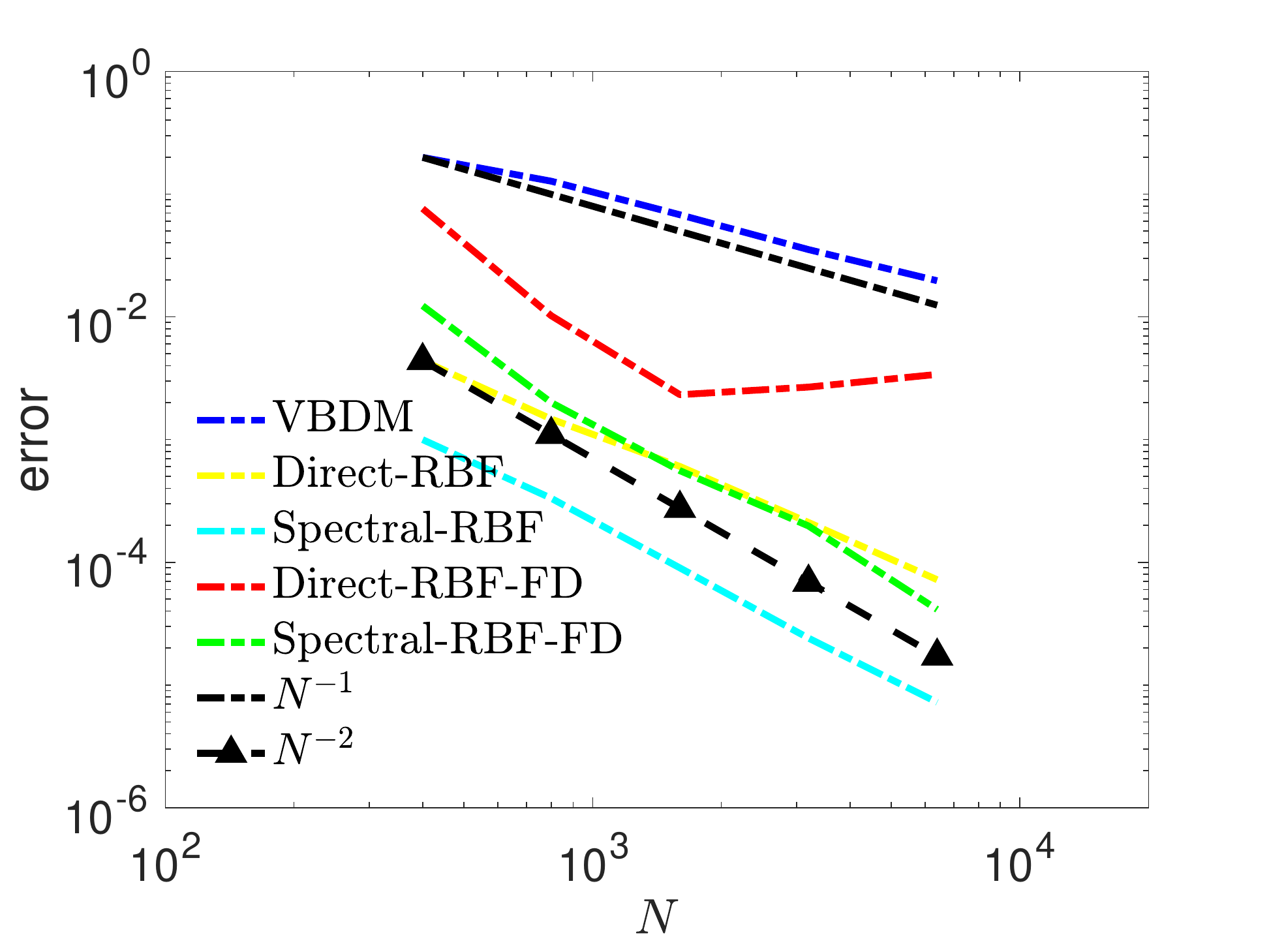}
\end{subfigure}
     \begin{subfigure}[h]{0.45\linewidth}
    \caption{Error vs $K$.}
 \includegraphics[width=\linewidth]{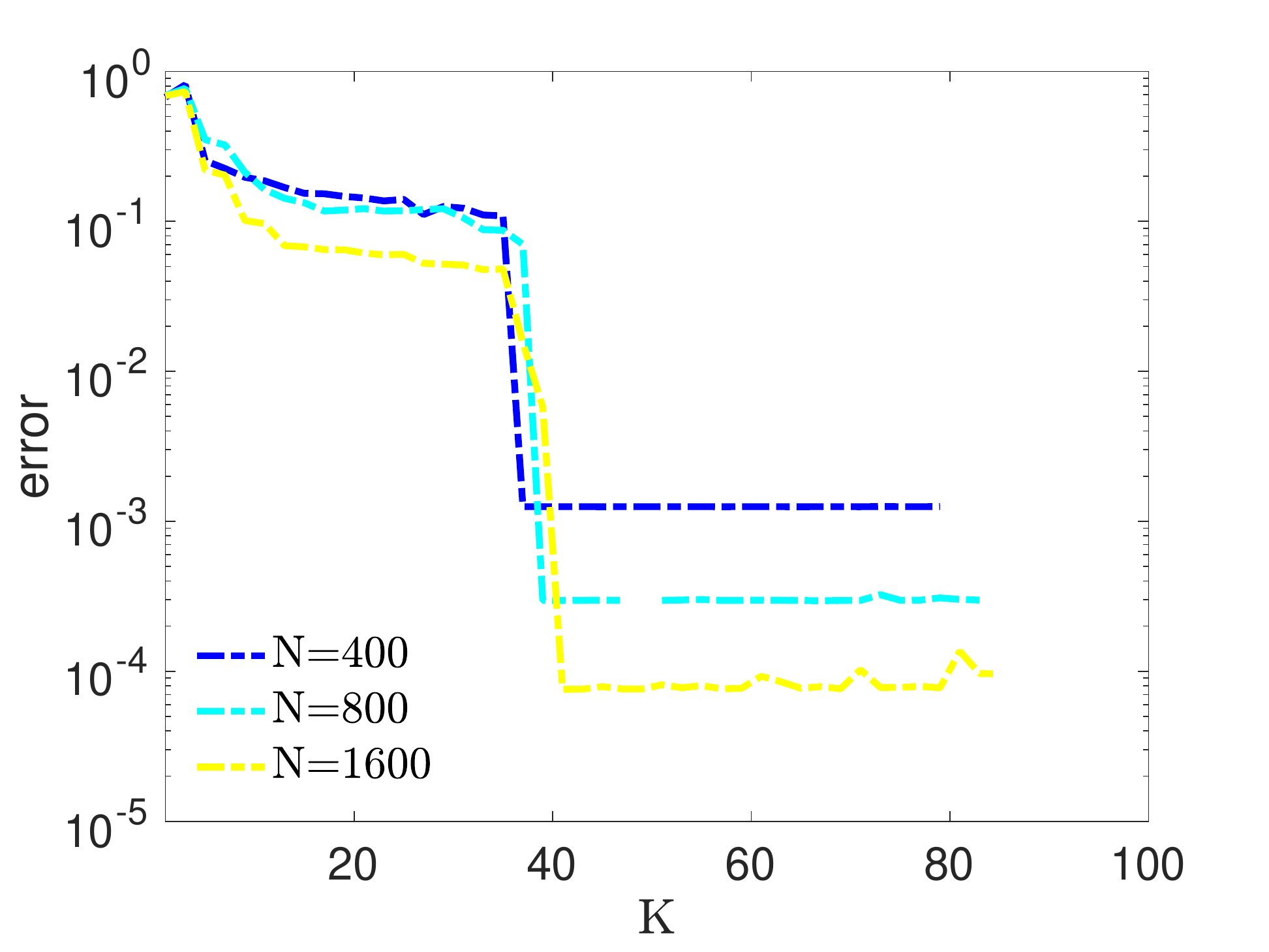}
\end{subfigure}
 \caption{Ellipse example: (a). Average $\ell^\infty$ error with respect to $N$. (b). Average $\ell^\infty$ error of Spectral-RBF with respect to number of eigenfunctions, $K$. }
\label{fig-ellipse}

 \end{figure}


\subsection{PDE on a torus}\label{sec4.3}
For the second example, we consider solving the equation (\ref{eq1}) on a torus  which has the embedding function
$$
\iota(\theta, \phi)=\left(\begin{array}{c}{(R+r\cos \theta) \cos \phi} \\ {(R+r\cos \theta) \sin \phi} \\ {r\sin \theta}\end{array}\right), \quad \theta \in[0,2 \pi),\quad \phi\in[0,2\pi)
$$
where $R$ is the distance from the centre of the tube to the center of torus and $r$ is the distance from the center of the tube to the surface of the tube with $r<R$. The induced Riemannian metric is
$$
g_{\textbf{x}^{-1}(\theta, \phi)}(v, w)=v^{\top}\left(\begin{array}{cc}r^2 & 0 \\ 0 & (R+r\cos \theta)^{2}\end{array}\right) w, \quad \forall v, w \in T_{\textbf{x}^{-1}(\theta, \phi)} M.
$$
In our numerical experiments, we take $R=2$ and $r=1$. The true solution $u$ of the PDE is set to be $u=\sin\phi\sin\theta$, the diffusion coefficient is defined to be $
\kappa=1.1+\sin ^{2} \theta \cos ^{2} \phi$ and the constant $c=1$. The forcing term $f$ could be calculated as
$$
f:=-\operatorname{div}_g(\kappa \textup{grad}_g u)+c u=-\frac{1}{\sqrt{|g|}}\left[\frac{\partial}{\partial \theta}\left(\kappa \sqrt{|g|} g^{11} \frac{\partial u}{\partial \theta}\right)+\frac{\partial}{\partial \phi}\left(\kappa \sqrt{|g|} g^{22} \frac{\partial u}{\partial \phi}\right)\right]+c u.
$$

Numerically, the grid points $\{\theta_i\}$ are randomly sampled from the uniform distribution on $[0,2\pi)\times[0,2\pi)$.  To illustrate the convergence of solutions over number of points $N=[800,1600,3200,6400,12800]$, we fix the shape parameter $s$ used in Spectral-RBF and Direct-RBF. In particular, we choose $s=0.7$ in Direct-RBF. In Spectral-RBF, shape parameter is chosen as $s=0.3$ for solving eigenvalue problem of $\sum_{\ell=1}^n\mathbf{G}_{\ell}^\top\mathbf{G}_{\ell}$ and $s=0.7$ for construction of $\Delta_X^{\mathrm{RBF}}$. Again, we solve the eigenvalue problem on first trial of generated grid points and then apply RBF interpolation to obtain new eigenfunctions on newly generated grid points in other trials.  In VBDM, we choose $k_2=[14,20,28,40,55]$ for density estimation and $k_1=[28,40,56,80,110]$ nearest neighbors to construct the estimator $\mathbf{L}_{\epsilon,\rho}$.

In Fig.~\ref{fig-torus}(a), we show the average $\ell^\infty$-error over 10 independent trials as a function of $N$. To estimate the projection matrix, we pick $k=\sqrt{N}$ in the 2nd-order local SVD scheme. The number of eigenfunctions used in Spectral-RBF is 300. The average $\ell^\infty$ error decrease on order of $N^{-1/2}$ in VBDM. On the other hand, the errors of Direct-RBF and Spectral-RBF decrease roughly on the order of $N^{-1}$ which is better than VBDM again. In Fig.~\ref{fig-torus}(b), we show the average error as a function of the number of eigenfunctions, $K$, used in Spectral-RBF. In Fig.~\ref{fig-torus}(b),  the average error keeps decreasing until $K$ reach at a particular number which is similar to the case on ellipse. And it is also clear that for fixed $K$, the error is always smaller for larger sample size, $N$. In  Fig.~\ref{fig-torus_soln}, we show the distribution of errors of Spectral-RBF and Direct-RBF on the torus for sample size $N=6400$; we can see that the Spectral-RBF estimate is uniformly more accurate than that of Direct-RBF.

   \begin{figure}[htbp]%
    \centering
      \begin{subfigure}[h]{0.45\linewidth}
    \caption{Error vs $K$.}
\includegraphics[width=\linewidth]{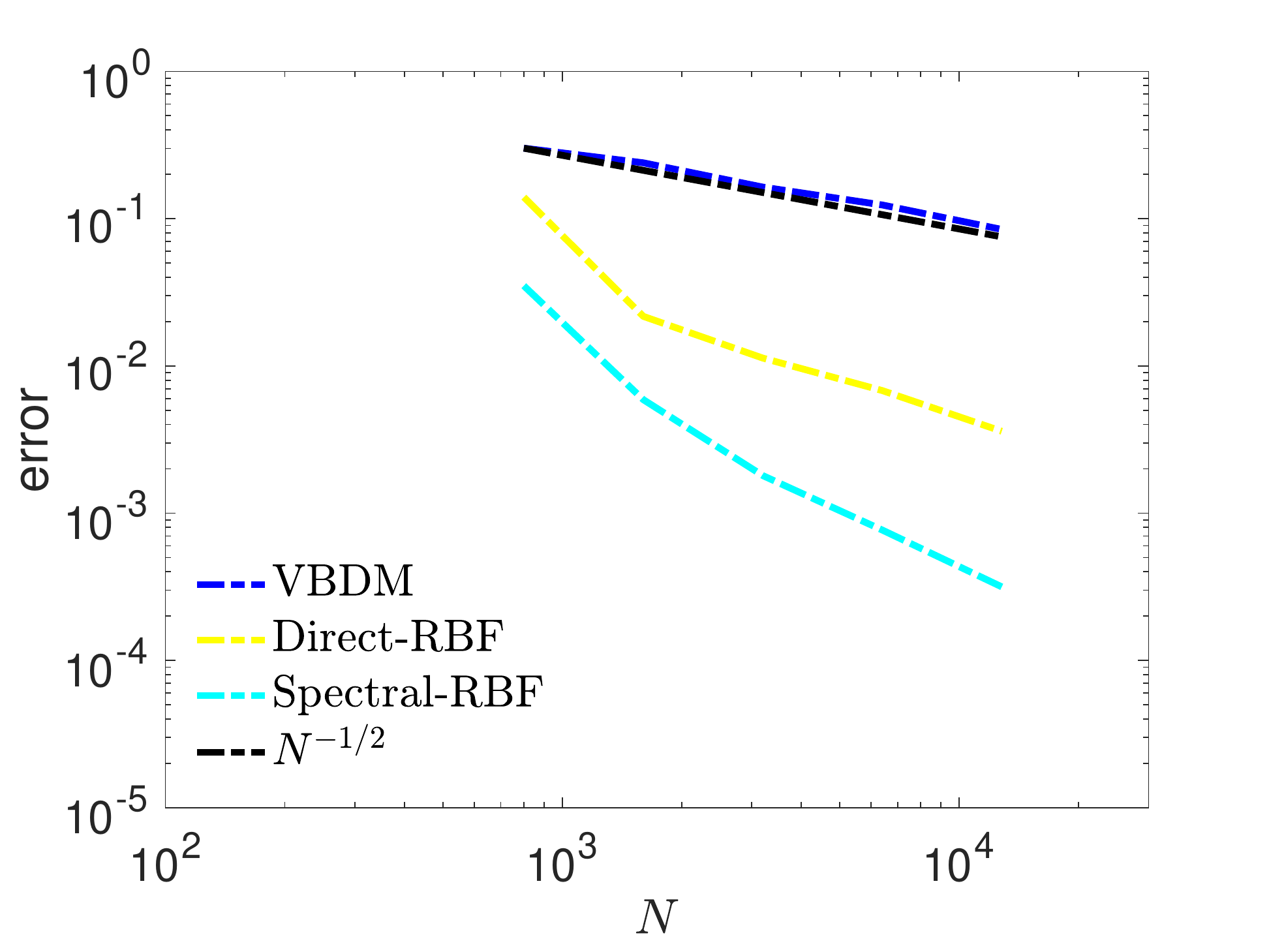}
\end{subfigure}
     \begin{subfigure}[h]{0.45\linewidth}
    \caption{Error vs $K$.}
    \includegraphics[width=\linewidth]{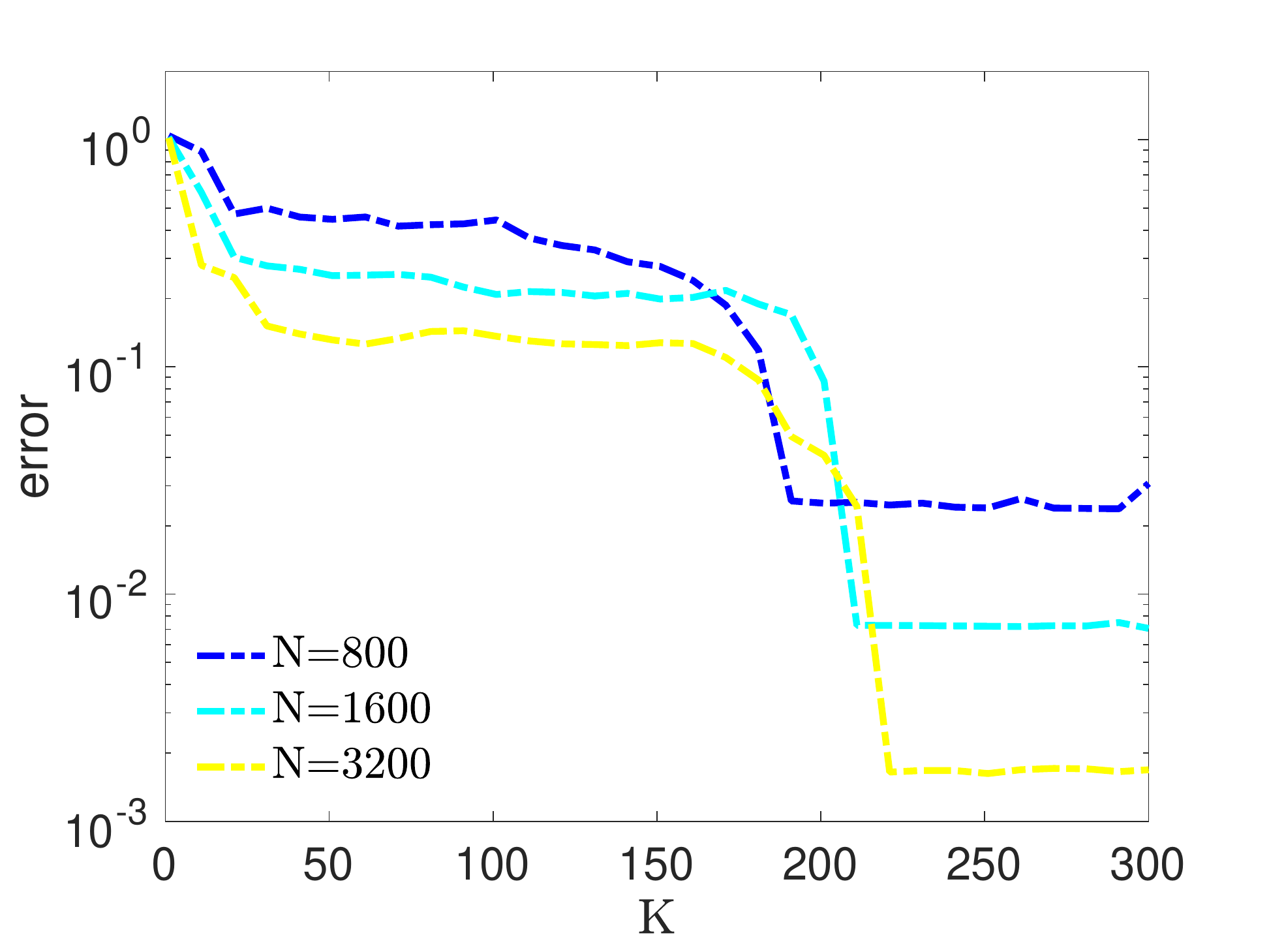}
\end{subfigure}
 \caption{Torus example: (a). Average $\ell^\infty$ error with respect to $N$. (b). Average $\ell^\infty$ error of Spectral-RBF vs number of eigenfunctions, $K$.}
   \label{fig-torus}
 \end{figure}

   \begin{figure}[htbp]%
    \centering
      \begin{subfigure}[h]{0.45\linewidth}
    \caption{Direct-RBF, $\ell^\infty$-error$=0.0060$}
\includegraphics[width=\linewidth]{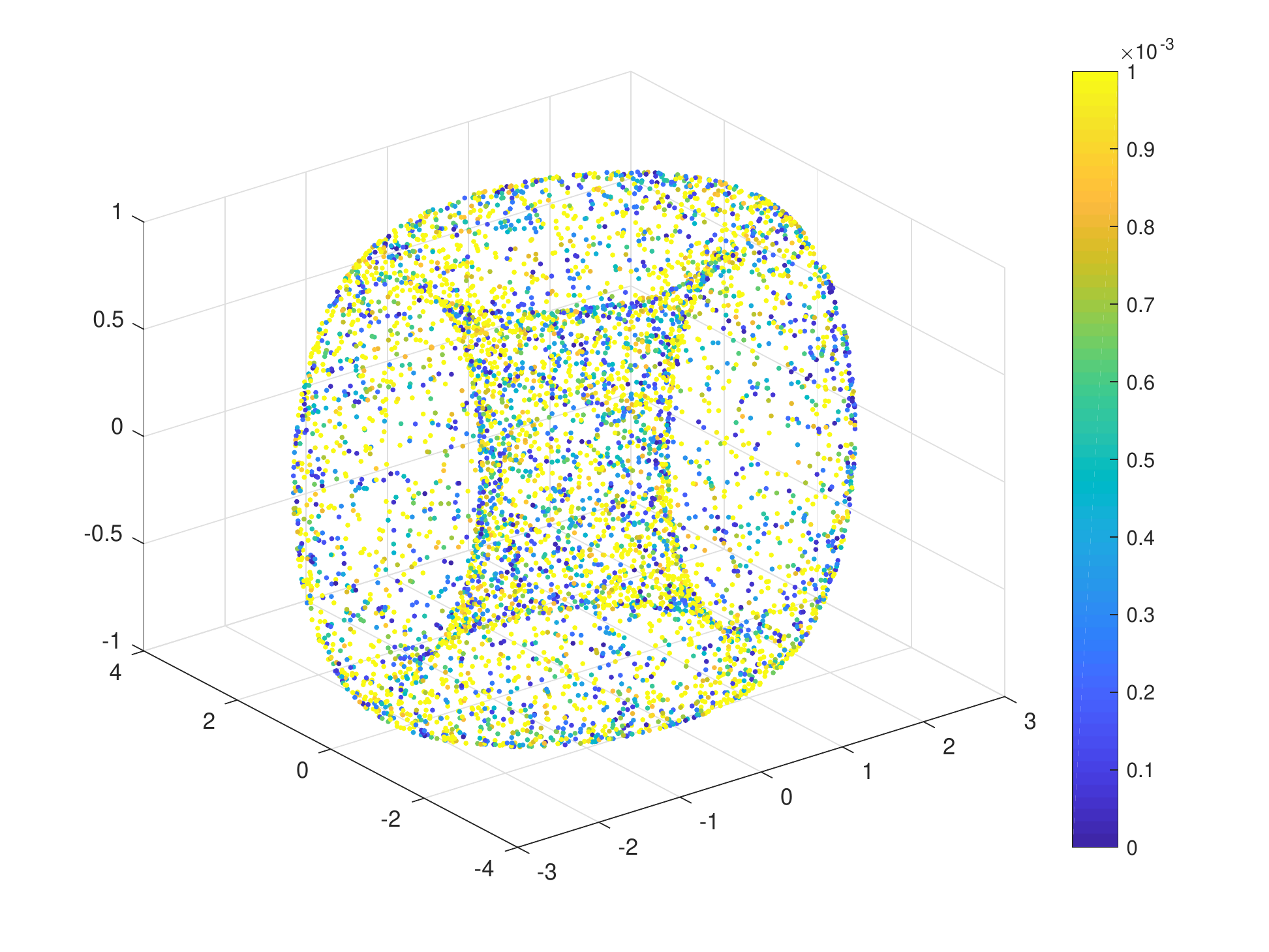}
\end{subfigure}
     \begin{subfigure}[h]{0.45\linewidth}
    \caption{ Spectral-RBF, $\ell^\infty$-error$= 6.7330e-04$}
    \includegraphics[width=\linewidth]{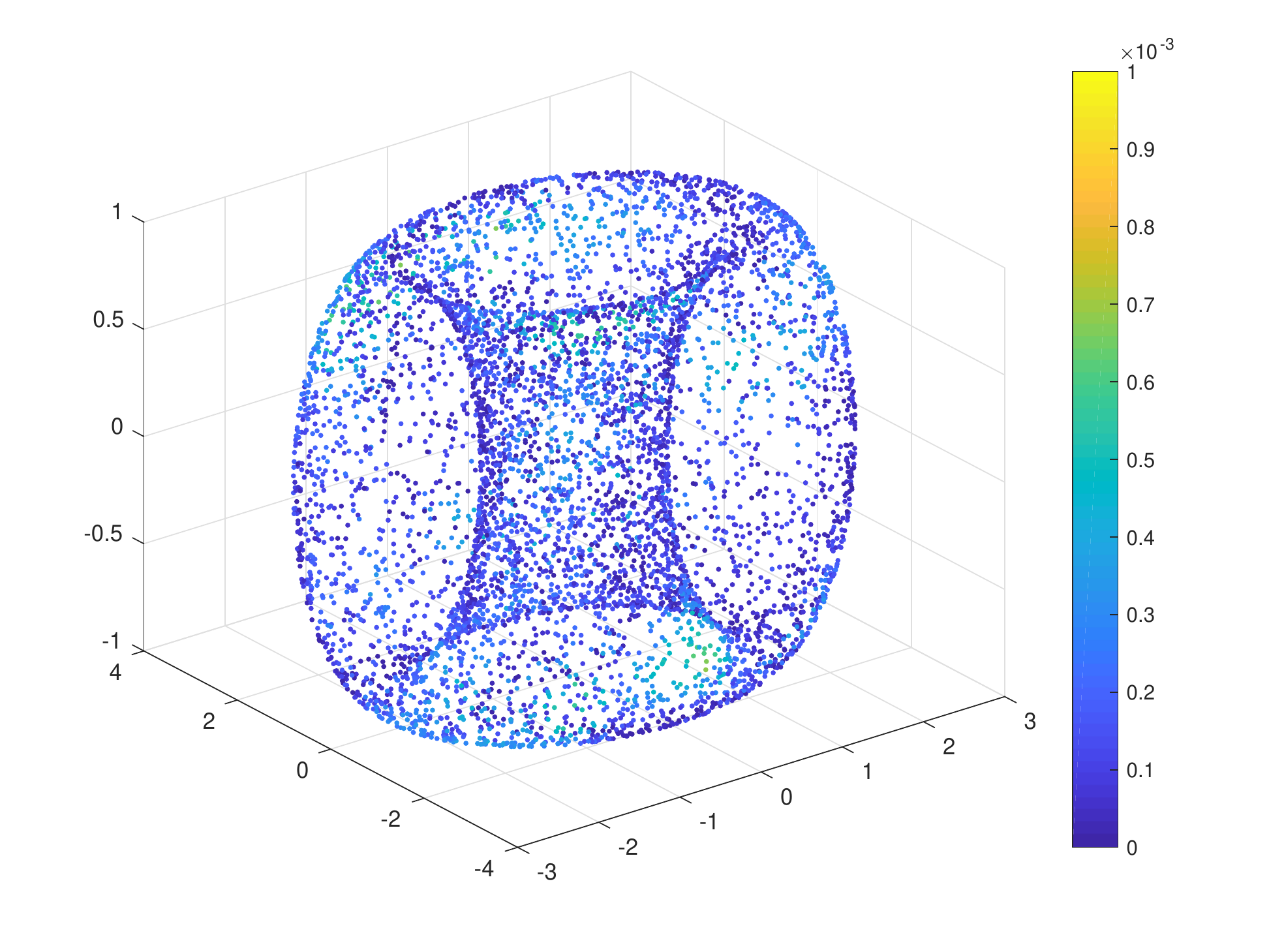}
\end{subfigure}
 \caption{Torus example: absolute difference at each point when $N=6400$. (a). Direct-RBF. (b). Spectral-RBF.}
  \label{fig-torus_soln}
 \end{figure}

\subsection{PDE on the ``Stanford Bunny''}
\label{section_bunny}
In the last example, we consider solving the elliptic problems on the Stanford Bunny model which is a two-dimensional surface embedded in $\mathbb{R}^3$. The data of this model is downloaded from the Stanford 3D Scanning Repository \cite{bunny}. The original data set of the Stanford Bunny comprises a triangle mesh with 34,817 vertices. Since this data set has singular regions at the bottom, we generate a new mesh of the surface using the Marching Cubes algorithm~\cite{lorensen1987marching} that is available through the Meshlab~\cite{cignoni2008meshlab}. We should point out that the Marching Cubes algorithm does not smooth the surface. We will also verify the solution on a smoothed surface, following the numerical work in \cite{shankar2015radial}. Specifically, we smooth the surface using the Screened Poisson surface reconstruction algorithm to generate a watertight implicit surface that fits the point cloud of the bunny. Subsequently, we will use the vertices of the new mesh as sample points to avoid singularity induced by the original data set. To check the convergence behavior of different schemes with respect to different sizes of data points, we subsequently apply the Quadric edge algorithm \cite{garland1997surface} to simplify the mesh obtained by the Marching Cubes algorithm (which is a surface of 34,594 vertices) into 4000, 8000, and 16,000, and 32000 vertices. See Fig.~\ref{bunny_poisson}(a) and (b) for an example of sample points from the un-smoothed and smoothed surfaces with $N=32000$.

The equation we consider here is $\Delta_Mu+0.2u=f$ with $f=0.6(x_1+x_2+x_3)$.
In this example, we have no access to the analytic solution due to the unknown embedding function. For comparisons, we take the surface finite-element method solution obtained from the FELICITY FEM Matlab toolbox \cite{walker2018felicity} as the reference (see Figure~\ref{bunny_poisson}(c) and (d) for the solutions corresponding to un-smoothed and smoothed bunnies).
In the following, instead of using the $\ell^\infty-$error metric, we will compare the relative difference to the FEM solution which is defined as
$$
\Delta E=\text{relative difference }=\max\limits_{1\leq i\leq N}\frac{|\widehat{u}(\textbf{x}_i) - u^{\mathrm{FEM}}(\textbf{x}_i)|}{|u^{\mathrm{FEM}}(\textbf{x}_i)|}
$$
where $u^{\mathrm{FEM}}$ is the FEM solution and $\widehat{u}$ is the estimated solution obtained from the corresponding solver.

First, let us report the results on un-smoothed surface. In our numerical experiment, we found that Direct-RBF do not work on this surface. We suspect that the estimation of the tangent space in this ``rough surface'' is rather poor such that the estimated projection matrix $\hat{\mathbf{P}}$ is not well estimated. In Fig.~\ref{bunny}, we compared solutions obtained from the VBDM and Spectral-RBF.  For Spectral-RBF, the shape parameter is chosen to be $s=0.5$ for solving eigenvalue problem of $\Delta_X^{\mathrm{SRBF}}$ and $s=1$ for the construction of $\Delta_X^{\mathrm{RBF}}$.  For VBDM, we fix $k_2=64$ for the density estimation and $k_1=128$ nearest neighbors for the construction of the estimator $\mathbf{L}_{\epsilon,\rho}$ for each $N$. From the simulations based on these parameters, we found that VBDM is slightly more accurate than Spectral-RBF. Here, we show that the result can be slightly improved with an alternative symmetric formulation VBDM-RBF discussed in Remark~\ref{altbasis}, where the Galerkin expansion is employed with a set of orthonormal basis $\{\varphi_j\}_{j=1,\ldots, K}\subset L^2(M)$ approximated by eigenvectors of the VBDM approximation to Laplace-Beltrami operator. Comparing panels (b) and (c) in Fig.~\ref{bunny_soln}, we can see that the spatial distribution of the relative errors of Spectral-RBF and VBDM are somewhat different while the maximum difference is comparable.
In panel (d), one can see that the alternative VBDM-RBF scheme shows more accurate result.

   \begin{figure*}[tbp]%
    \centering
      \begin{subfigure}[h]{0.45\linewidth}
    \caption{Unsmoothed samples}
\includegraphics[width=\linewidth]{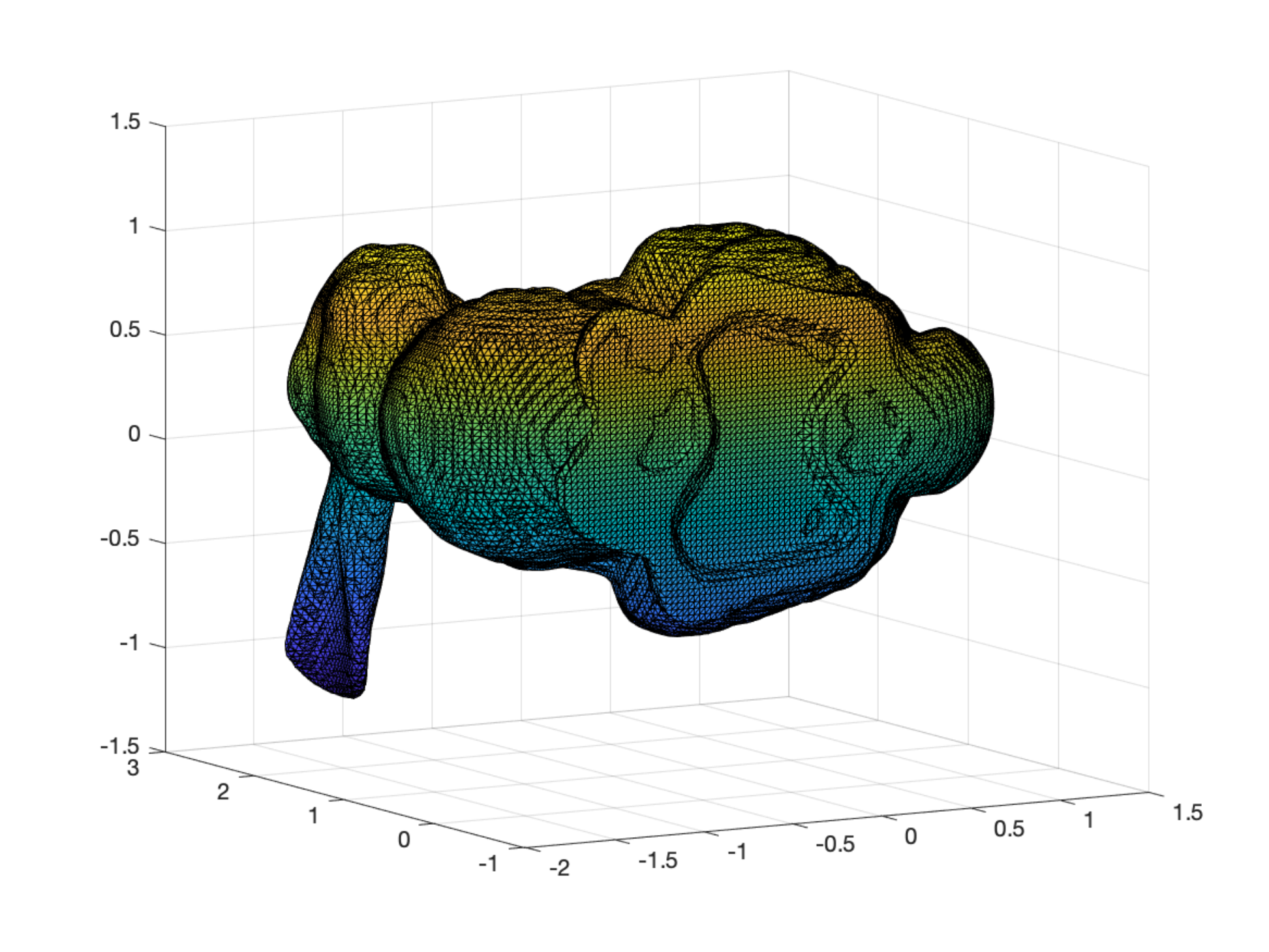}
\end{subfigure}
     \begin{subfigure}[h]{0.45\linewidth}
    \caption{Smoothed samples.}
    \includegraphics[width=\linewidth]{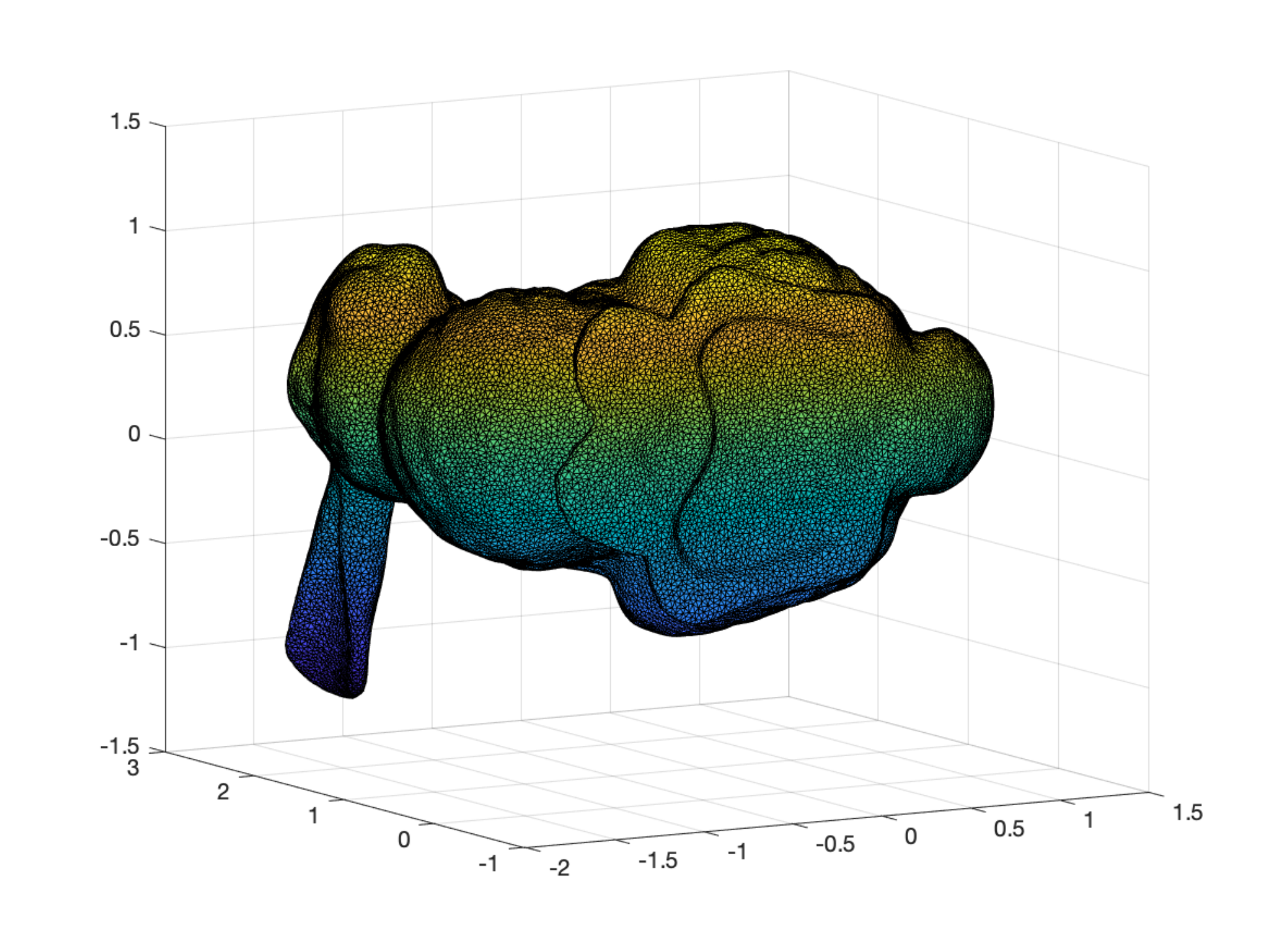}
\end{subfigure}
     \begin{subfigure}[h]{0.45\linewidth}
    \caption{FEM solution on unsmoothed samples.}
    \includegraphics[width=\linewidth]{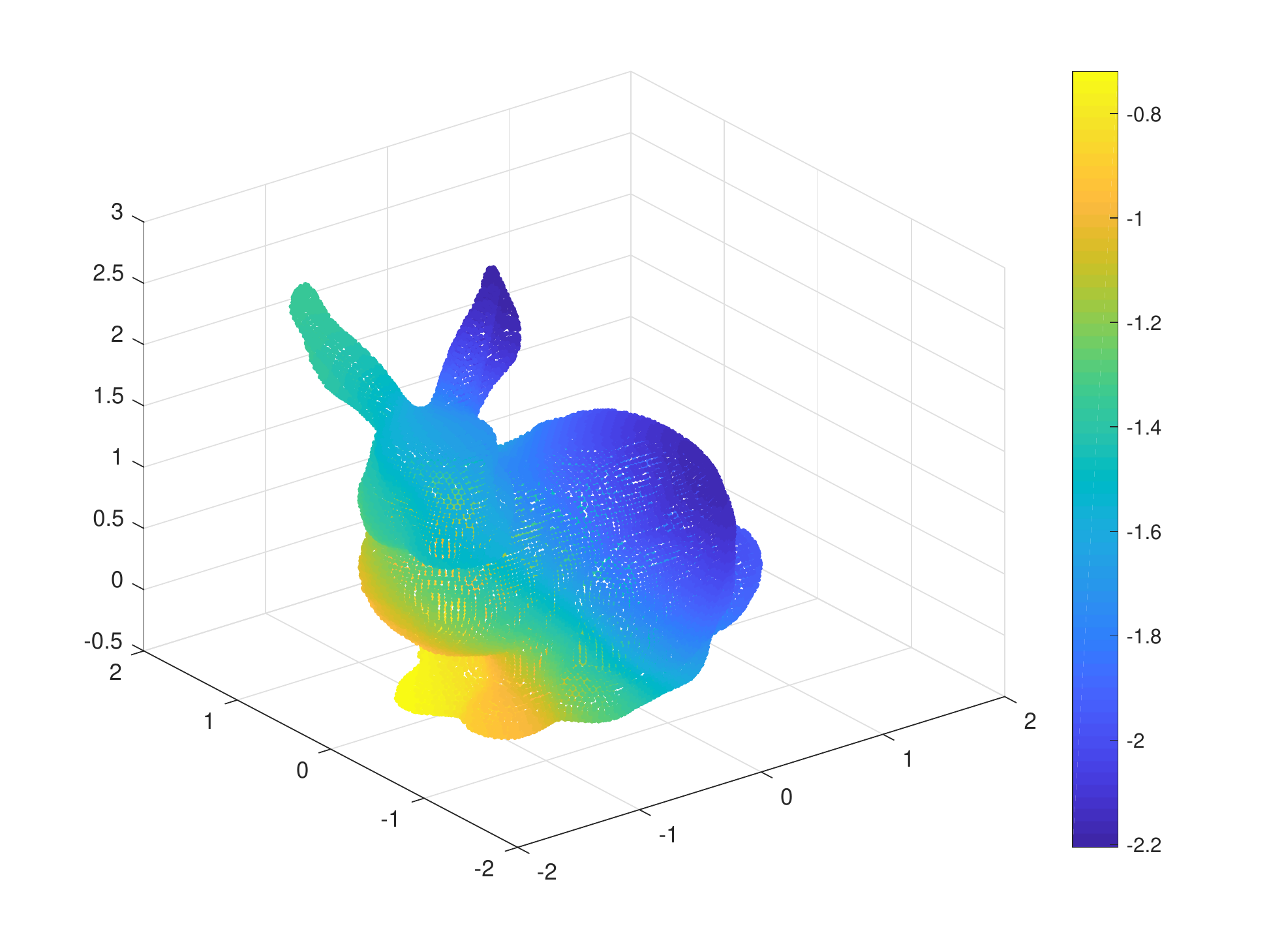}
\end{subfigure}
     \begin{subfigure}[h]{0.45\linewidth}
    \caption{FEM solution on smoothed samples.}
    \includegraphics[width=\linewidth]{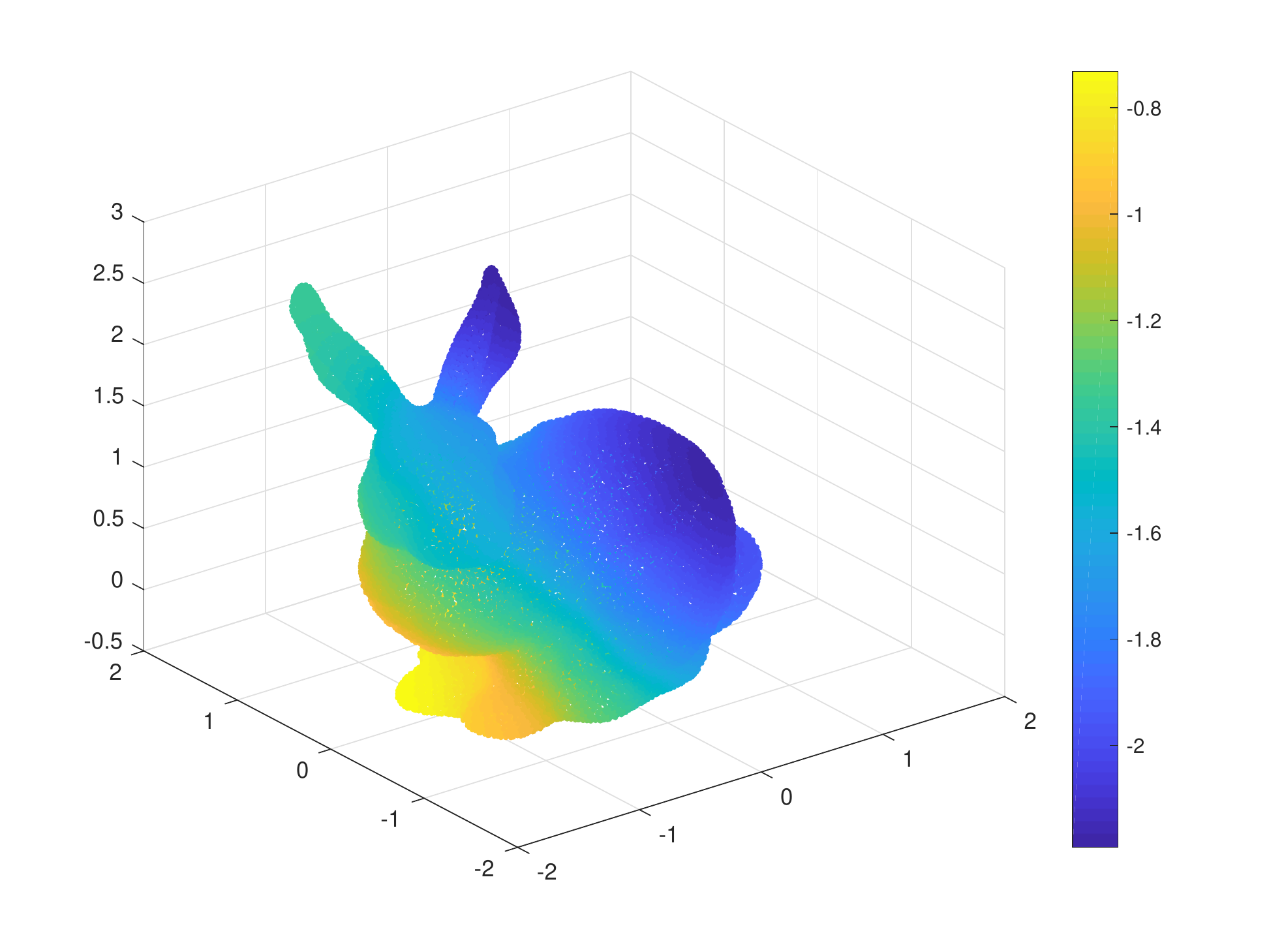}
\end{subfigure}
 \caption{Bunny example: $N=32000$ point cloud samples (a). without Poisson reconstruction, and (b). from a smoothed surface obtained by the Screened Poisson reconstruction algorithm. Shown in panels (c) and (d) are the FEM solutions on unsmoothed samples and smoothed samples, respectively. }
  \label{bunny_poisson}
 \end{figure*}


    \begin{figure*}[tbp]%
    \centering
\includegraphics[width=0.6\linewidth]{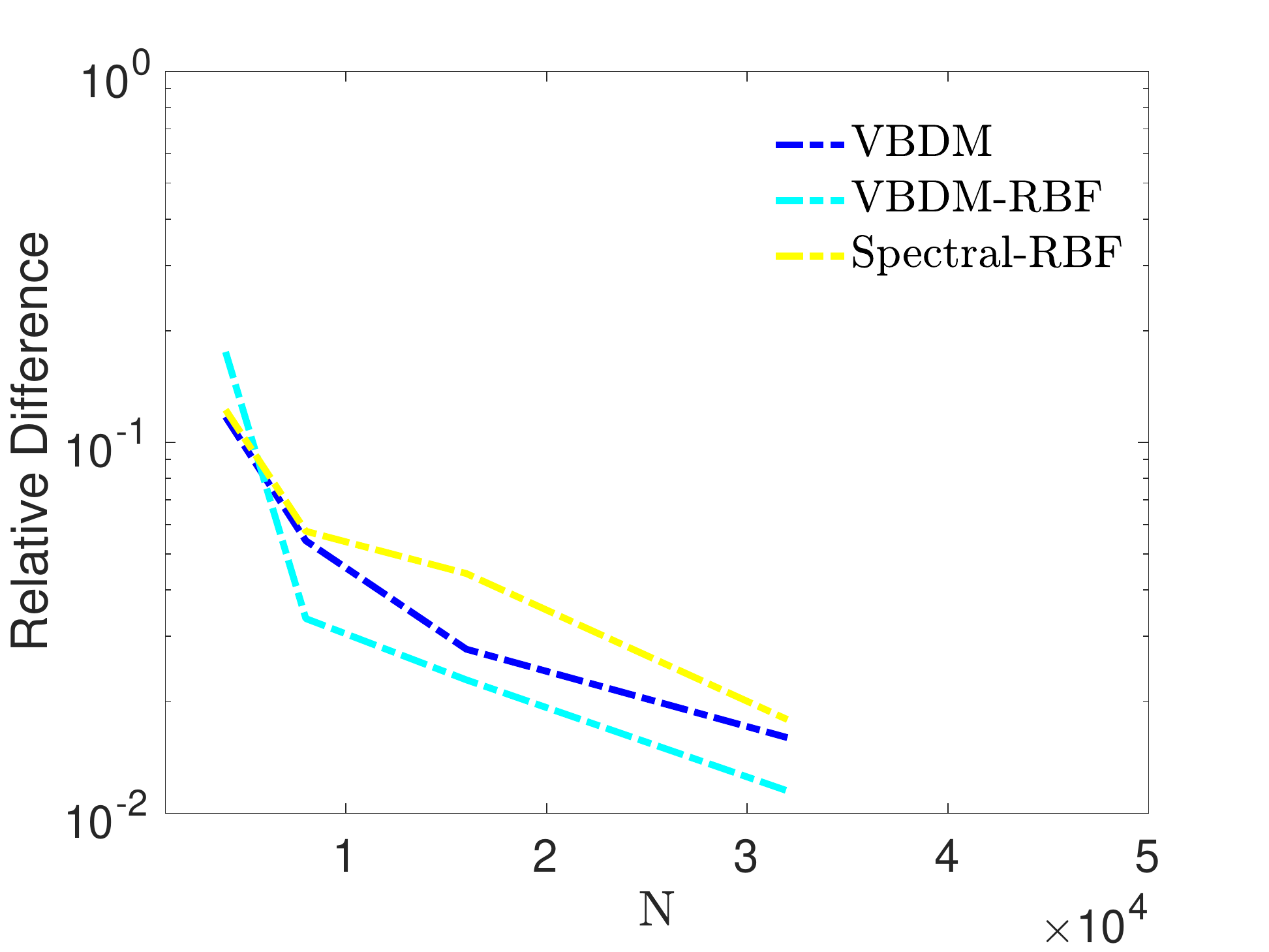}
\caption{Unsmoothed bunny example: Relative difference of different schemes $\Delta E$ with respect to different number of points $N$.}
\label{bunny}
 \end{figure*}

   \begin{figure*}[tbp]%
    \centering
     \begin{subfigure}[h]{0.33\linewidth}
    \caption{VBDM, $\Delta E = 0.0160$.}
    \includegraphics[width=\linewidth]{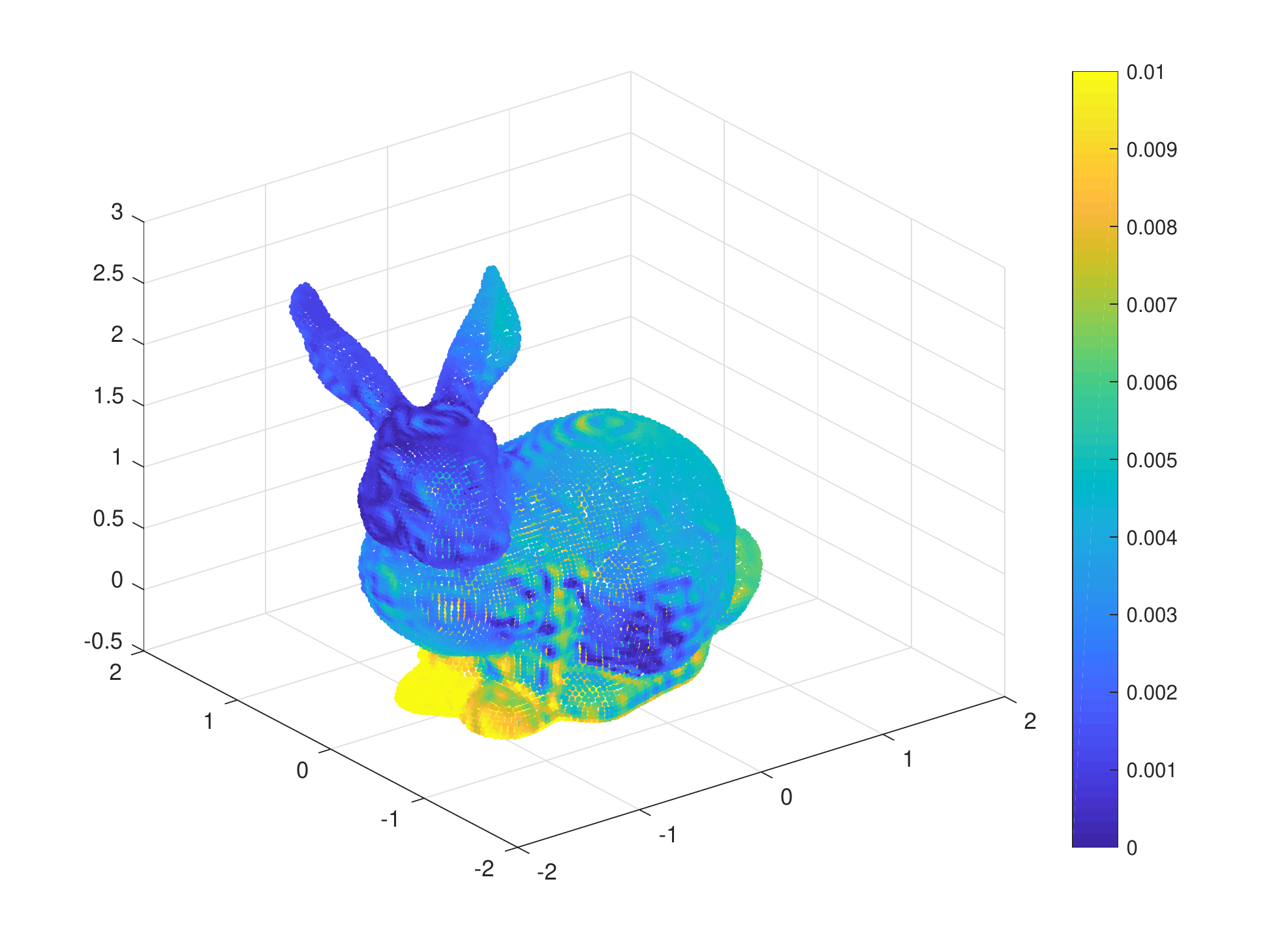}
\end{subfigure}
  \begin{subfigure}[h]{0.33\linewidth}
    \caption{Spectral-RBF, $\Delta E = 0.0178$.}
    \includegraphics[width=\linewidth]{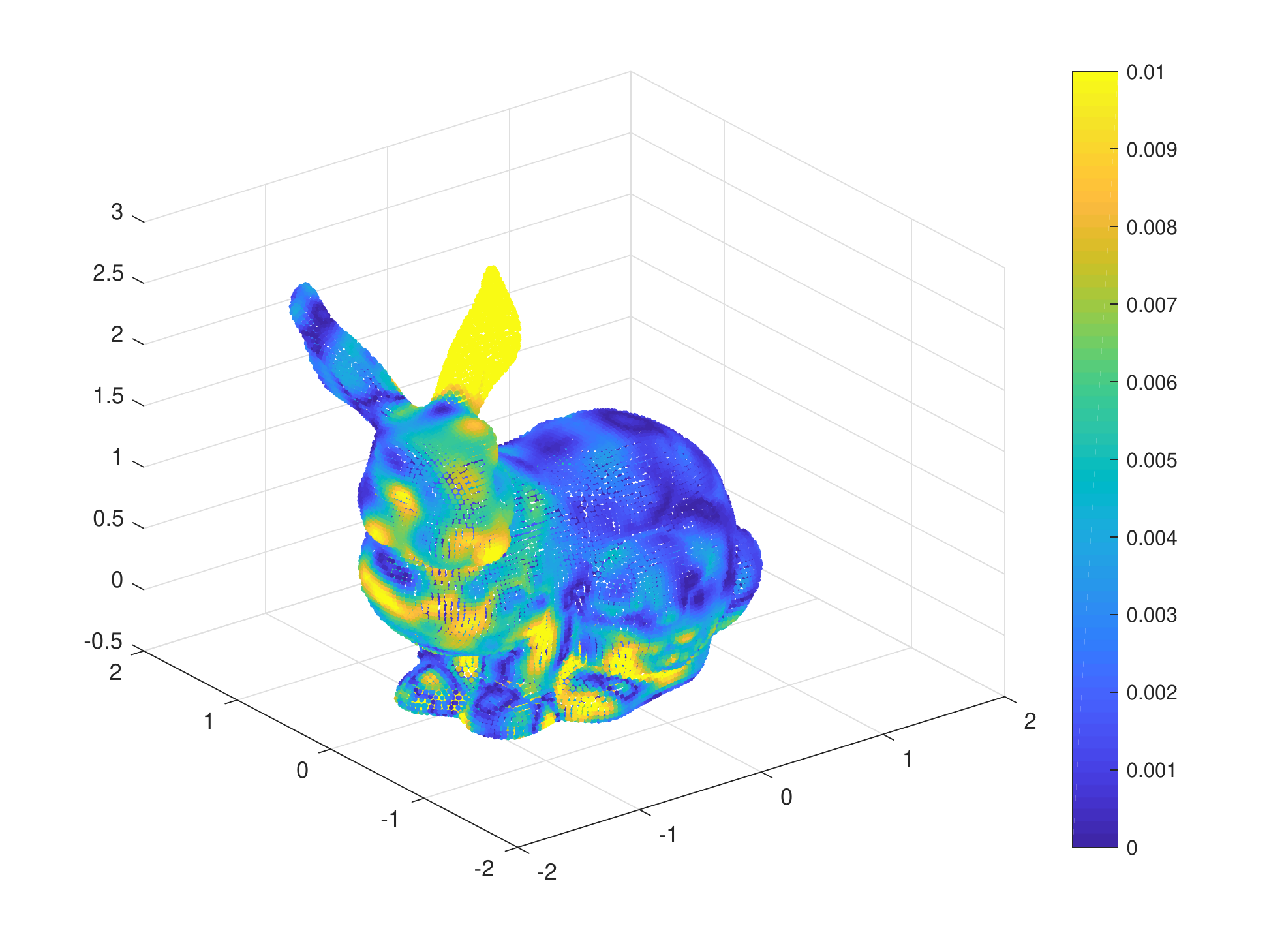}
\end{subfigure}
  \begin{subfigure}[h]{0.33\linewidth}
    \caption{VBDM-RBF, $\Delta E = 0.0115$.}
    \includegraphics[width=\linewidth]{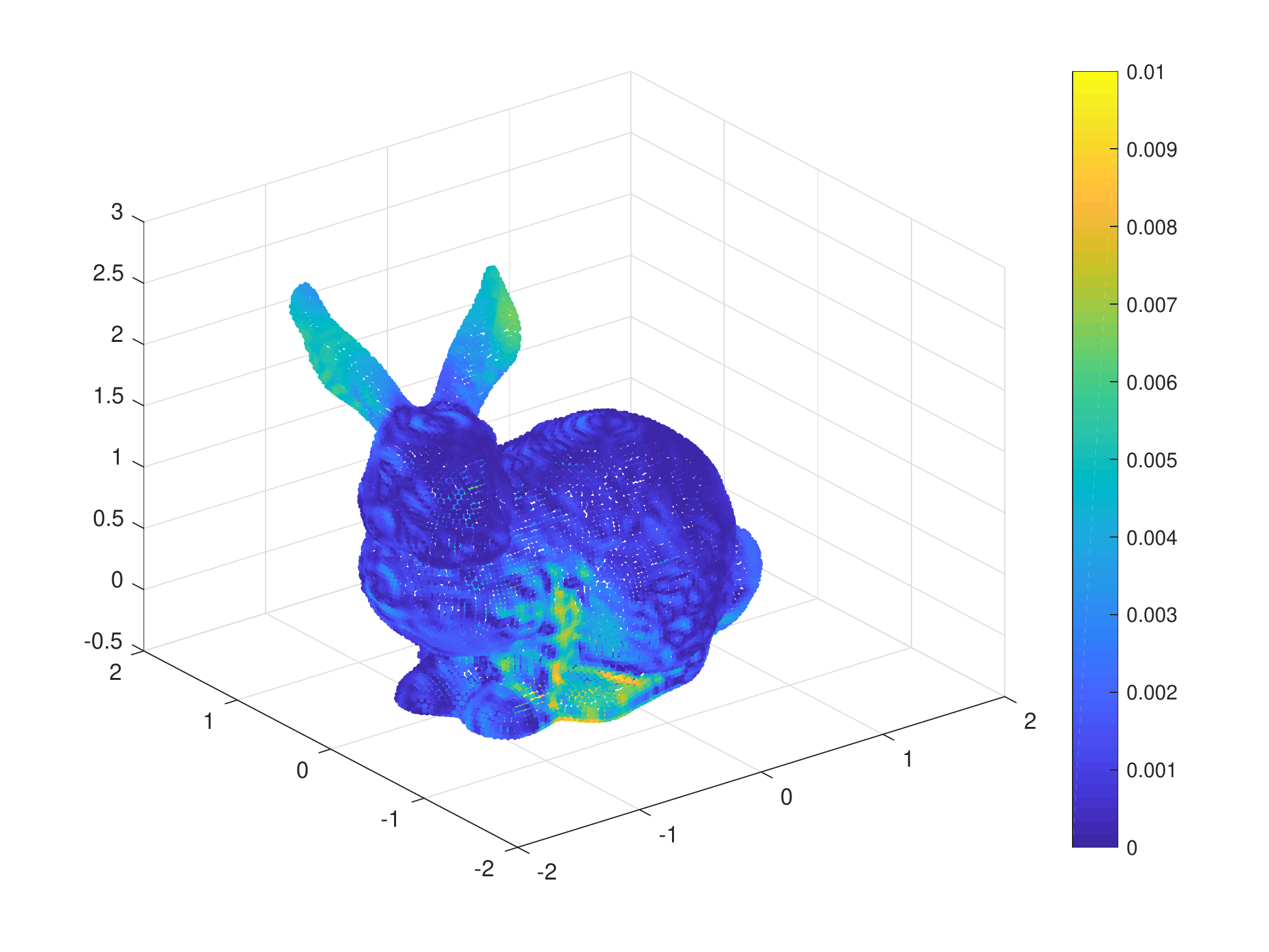}
\end{subfigure}
 \caption{Unsmoothed bunny example: $N=32000$. Relative difference at each point on the bunny for various methods.}
 \label{bunny_soln}
 \end{figure*}


On the smoothed bunny, we use the same parameters as in the previous case. We also include results from the Direct-RBF, obtained with shape parameter $s=13$. In Fig.~\ref{bunny_smooth_soln}, we can see that Direct-RBF does work on this smoothed bunny while their relative difference to the FEM solutions is larger than those obtained from the other three methods. On this smoothed surface, we note that Spectral-RBF performs slightly better than VBDM and the alternative scheme VBDM-RBF still produces the most accurate solution in the sense that it is closest to the FEM solution.

   \begin{figure*}[tbp]%
    \centering
     \begin{subfigure}[h]{0.45\linewidth}
      \caption{VBDM, $\Delta E =0.0163$.}
    \includegraphics[width=\linewidth]{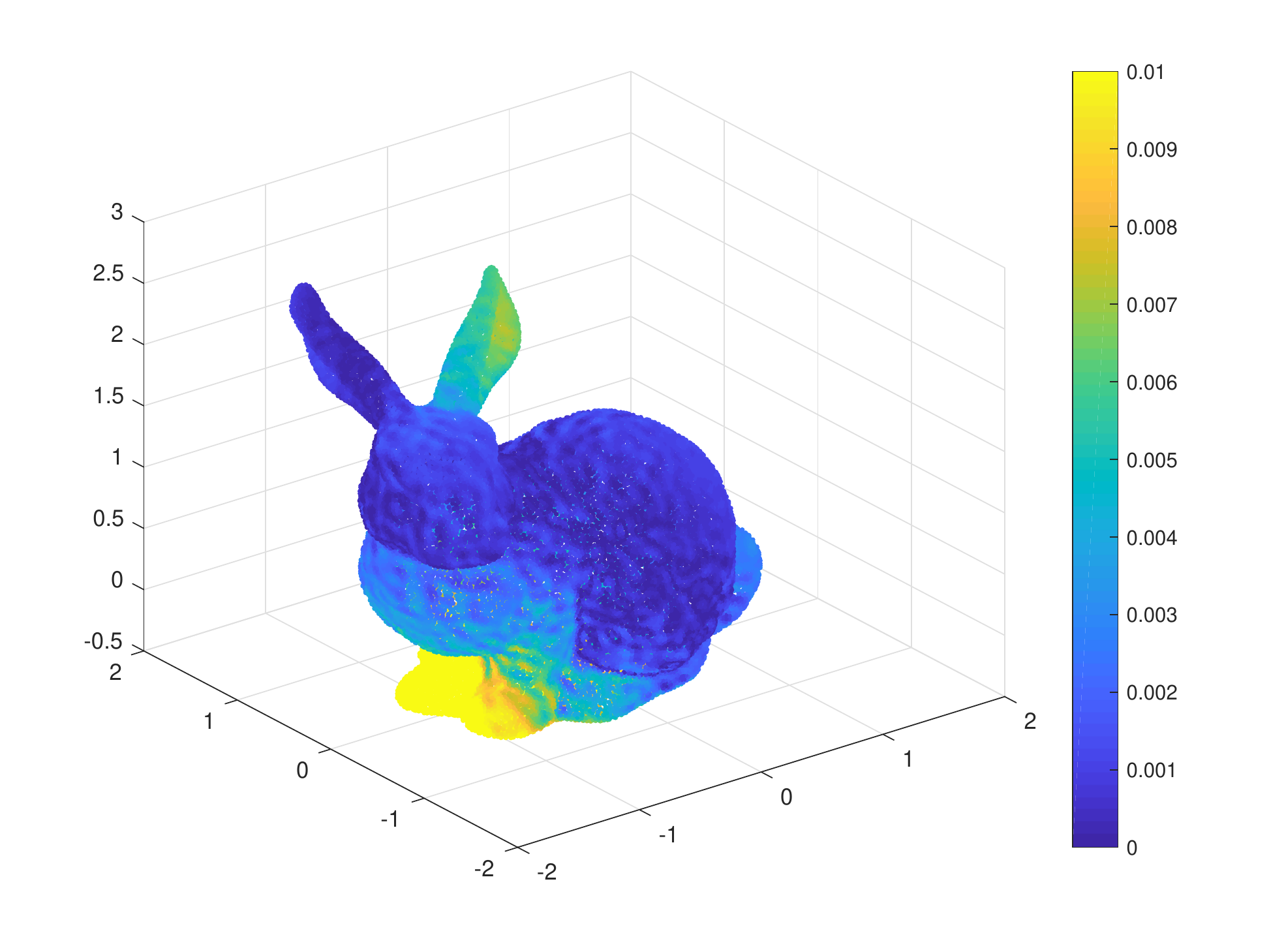}
\end{subfigure}
  \begin{subfigure}[h]{0.45\linewidth}
    \caption{Spectral-RBF, $\Delta E =0.0111, s=0.75$.}
    \includegraphics[width=\linewidth]{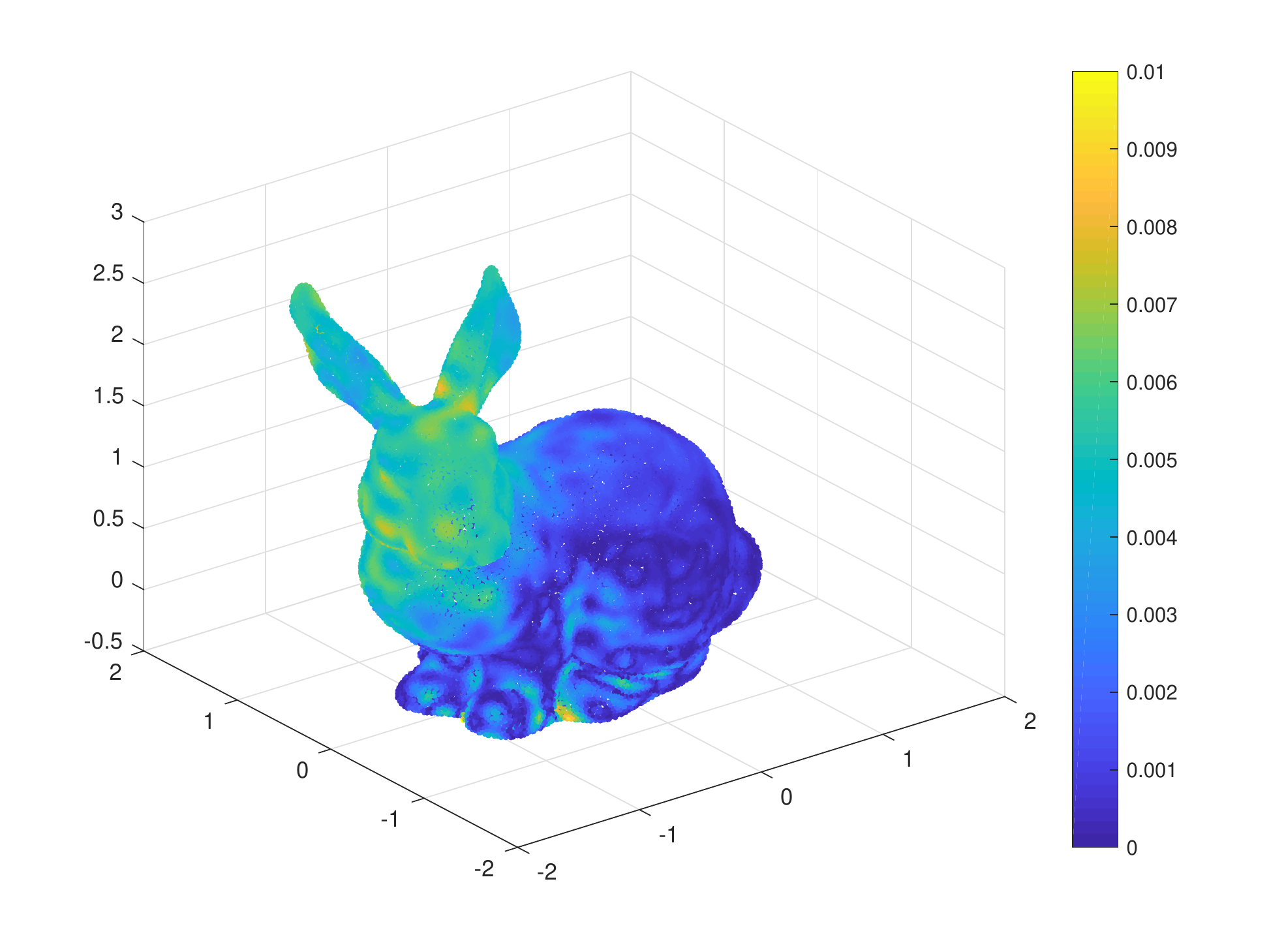}
\end{subfigure}
  \begin{subfigure}[h]{0.45\linewidth}
    \caption{VBDM-RBF, $\Delta E =0.0073$.}
   \includegraphics[width=\linewidth]{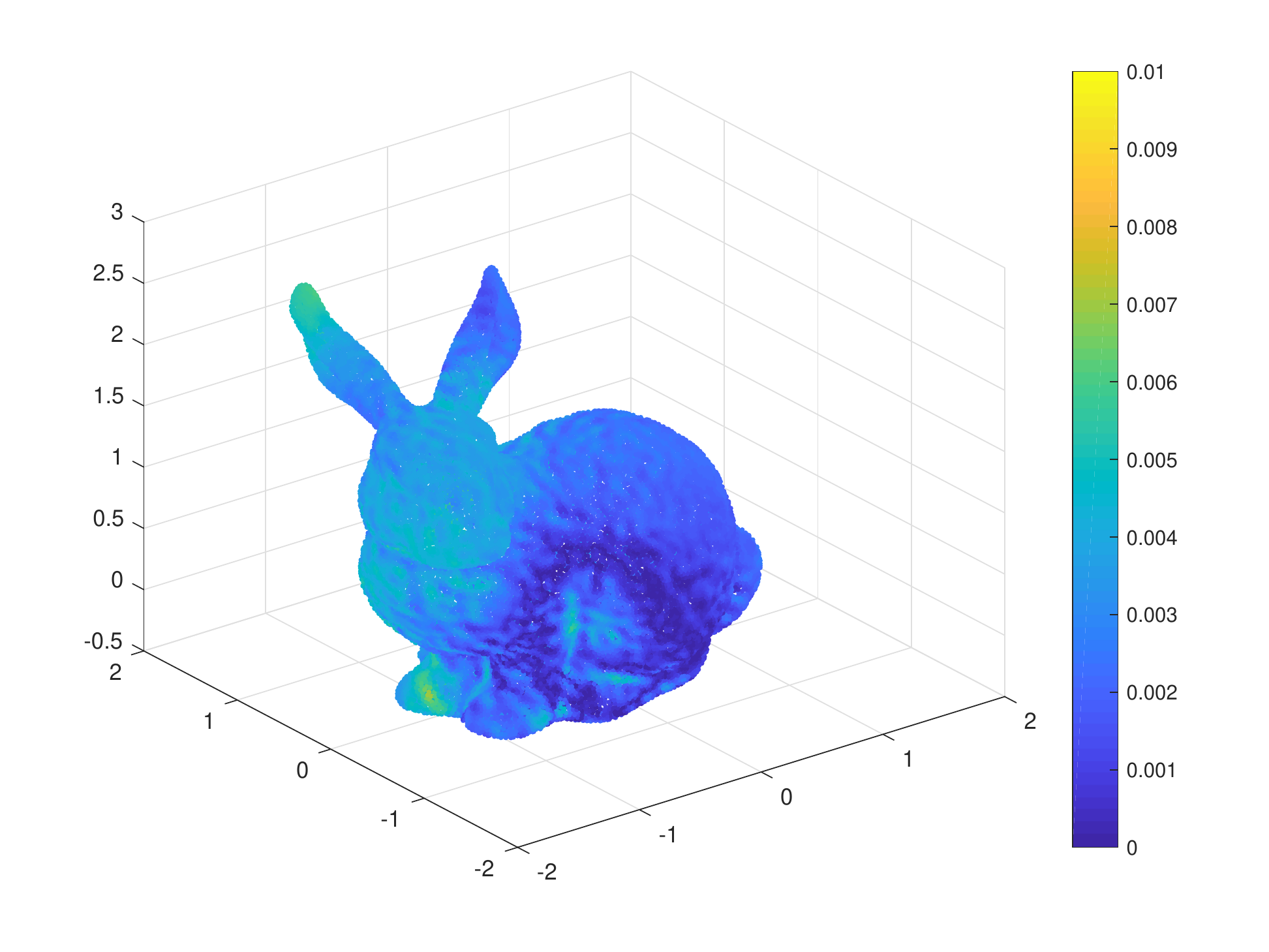}
\end{subfigure}
  \begin{subfigure}[h]{0.45\linewidth}
    \caption{Direct-RBF, $\Delta E =0.0414$, $s=13$.}
    \includegraphics[width=\linewidth]{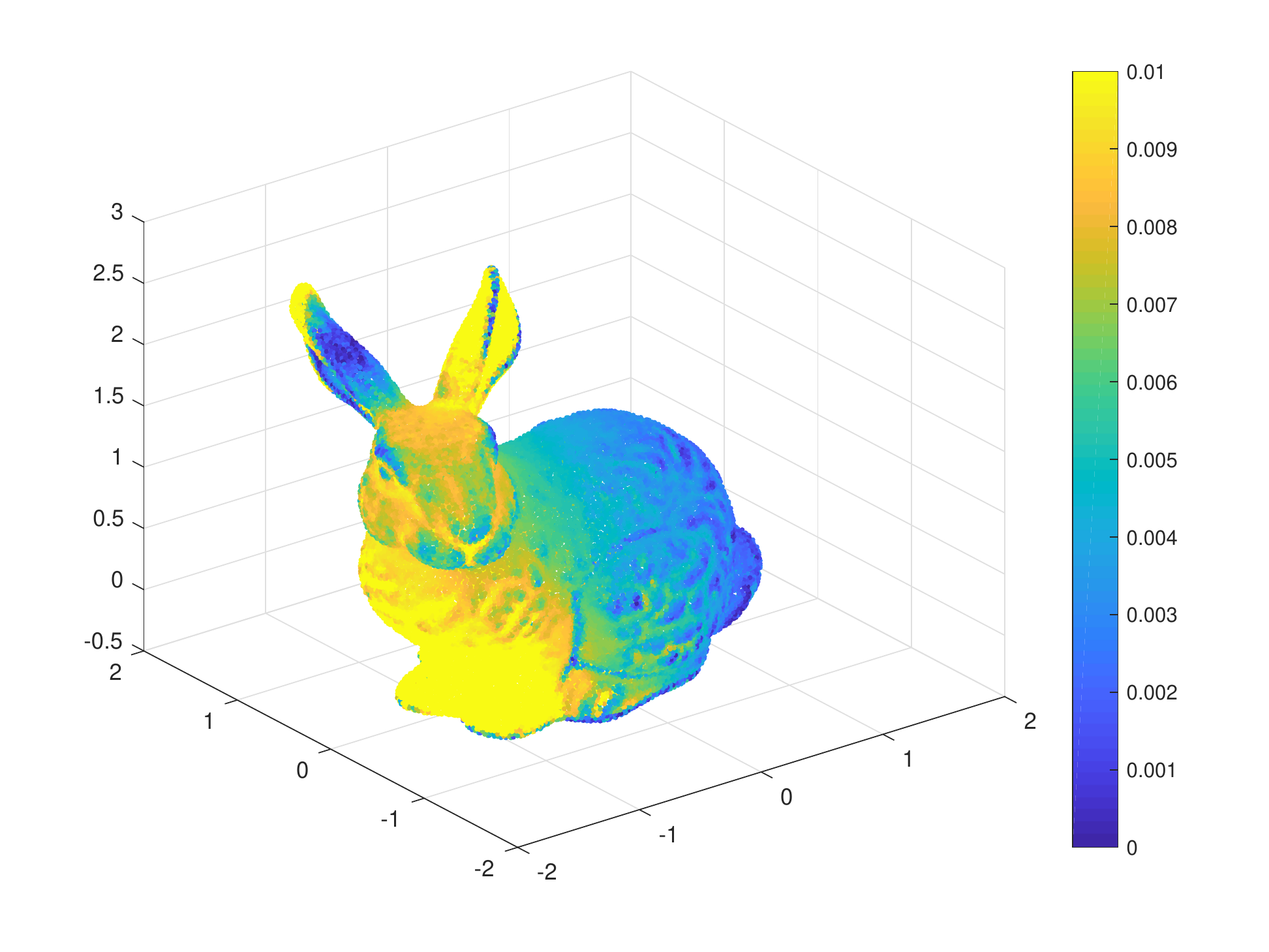}
\end{subfigure}
 \caption{Smoothed bunny example: Relative difference of different schemes at each point on the bunny with points $N=32000$.}
 \label{bunny_smooth_soln}
 \end{figure*}

\section{Summary}\label{sec5}
In this paper, we proposed a spectral method for solving elliptic PDEs on unknown manifolds identified by a set of point clouds. In particular, we solved the PDEs using a Galerkin approximation on a test function space spanned by a finite number of eigenfunctions of a weighted Laplacian operator that is weakly approximated by a symmetric RBF formulation \cite{harlim2022rbf}. We also employed a second-order local SVD method to improve the estimation of the projection matrix on point clouds, which subsequently improves the RBF interpolation accuracy. Theoretically, we deduced an error bound in terms of the number of eigenfunctions and also the size of the point cloud under appropriate assumptions of the PDE problems as well as weakly unstable RBF interpolants. The latter assumption is motivated by the fact that RBF interpolation is known to be unstable especially when the data is randomly distributed. From the analysis, we found that the instability growth rate of the interpolation operator plays a dominant role in characterizing the overall error rate, provided that we chose smooth enough kernels. From the numerical results, we found that the proposed method has a better convergence rate than the Variable Bandwidth Diffusion Maps on simple smooth manifolds with randomly sampled grid points.  On the unknown "bunny" manifold, we found that the accuracy of the proposed method is comparable to that of VBDM when the samples point from an un-smoothed surface. In both un-smoothed and smoothed bunnies, we found improved estimates by employing the same Galerkin formulation on a test function space induced by eigenbasis estimated from the VBDM scheme (VBDM-RBF), which suggests the flexibility of the proposed framework that is valuable in applications.

While the proposed method produces encouraging results, there are several open questions. In this paper, we demonstrated the proposed method to solve PDEs on closed manifolds. To handle PDEs imposed by Dirichlet or Neumann boundary conditions, a natural extension of this method could be to use eigenfunctions of homogeneous boundary conditions \cite{peoples2021spectral} in combinations with the harmonic functions of nontrivial boundary conditions as in \cite{harlim2022graph}, which is subjected to future research. Moreover, it would also be interesting to extend the proposed method to vector-valued nonlinear PDEs, employing operator estimation methods in \cite{harlim2022rbf}. The theoretical analysis in this paper relies on the interpolation error in Lemma~\ref{RKHS L2 Convergence} and a weakly unstable interpolator assumption (see Assumption~\ref{weaklyunstable}), where we allow the interpolation operator to be unstable with a polynomial rate in $N$ that grows slower than the Monte-Carlo error rate.
In practice, this assumption is not even valid for the choice of kernel used in our numerical experiments (as we reported in Proposition~\ref{prop:unstable}). Therefore, a further theoretical investigation that takes into account the pseudo-inversion operation that is numerically implemented to avoid unstable interpolators is needed to close the gap between theory and numerics. This issue is rather complicated since the RBF approximation with the pseudo-inverse solution is not an interpolation, so regression-type error bounds need to be considered, and we leave this to future investigation.

\section*{Acknowledgment} The research of JH was partially supported under the NSF grants DMS-1854299, DMS-2207328, DMS-2229435, and the ONR grant N00014-22-1-2193. The research of SJ was supported by the NSFC Grant No. 12101408. The authors also thank Senwei Liang for providing some sample codes for using Meshlab.

\appendix

\section{Proof of Proposition~\ref{prop:unstable}.}\label{AppA}

The main proof is an application of Theorem12.3 in  \cite{Wendland2005Scat} that characterizes a lower bound for the smallest eigenvalue in terms of the separation distance defined as follows.

\begin{defn} Let $\{\mathbf{x}_1,\ldots, \mathbf{x}_N\}\in X \subset M \subset \mathbb{R}^n$, we define the separation distance as,
\[
q_{X,\mathbb{R}^n} := \frac{1}{2} \min_{i\neq j} \| \mathbf{x}_i - \mathbf{x}_j\|_2,
\]
where the norm is defined as before, Euclidean norm in $\mathbb{R}^n$.
\end{defn}

Since Euclidean distance is a good approximation for the geodesic distance $d_g:M\times M\to\mathbb{R}$ for small distances (see e.g., Appendix B of \cite{coifman2006diffusion} or Appendix A of \cite{Berry2016IDM}), for large enough $N\gg 1$, then $q_{X,\mathbb{R}^n}$ is close to,
\[
q_{X,M} := \frac{1}{2} \min_{i\neq j} d_g(\mathbf{x}_i, \mathbf{x}_j).
\]
To specify how large $N$ needs to be, we need an upper bound of the separation distance in term of $N$. We also need to find a lower bound for separation distance to be bounded from below. For uniformly i.i.d. samples, we can deduce the following concentration bound.

\begin{lem}
Let $\{\mathbf{x}_1,\ldots,\mathbf{x}_N\} = X\subset M \subset \mathbb{R}^n$ be a set of i.i.d. samples from $d-$dimensional Riemannian manifold $M$ with positive Ricci scalar curvature at all $\mathbf{x}_i$. Then, with probability higher than $1-\frac{2}{N}$, there exists constants $c(d), C(d)>0$ such that,
\BEA
c(d) N^{-2/d} \leq q_{X,M} \leq C(d) \left(\frac{\log N}{N^2}\right)^{\frac{1}{d}}.\label{q_bound}
\EEA
\end{lem}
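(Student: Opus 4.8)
The plan is to read the two inequalities in \eqref{q_bound} as a lower-tail and an upper-tail estimate for the minimum pairwise geodesic distance $D_{\min} := \min_{i\neq j} d_g(\mathbf{x}_i,\mathbf{x}_j) = 2q_{X,M}$, prove each separately, and then union-bound the two failure events to reach the stated $1-2/N$ probability. Two preliminaries drive everything. First, since $M$ is compact with finite (bounded) scalar curvature, the small-radius expansion $\mathrm{Vol}(B(\mathbf{x},r)) = \omega_d r^d(1+O(r^2))$ holds uniformly, giving a two-sided comparison $c_1 r^d \leq \mathrm{Vol}(B(\mathbf{x},r)) \leq c_2 r^d$ for all $\mathbf{x}\in M$ and all $r$ below a fixed threshold, with $c_1,c_2$ depending only on $d$ and $M$. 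Because the samples are uniform, for a single pair $\mathbb{P}(d_g(\mathbf{x}_i,\mathbf{x}_j)\leq s) = \mathbb{E}[\mathrm{Vol}(B(\mathbf{x}_i,s))]/\mathrm{Vol}(M) \asymp s^d$ for small $s$. The correct scale is then fixed by the heuristic $\binom{N}{2}s^d \asymp 1$, i.e.\ $s \asymp N^{-2/d}$, which is exactly the order of both sides of \eqref{q_bound}.

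For the lower bound I would use a first-moment (union) bound. Setting $W_s := \#\{i<j : d_g(\mathbf{x}_i,\mathbf{x}_j)\leq s\}$, one has $\mathbb{E}[W_s] \leq \binom{N}{2} c_2 s^d/\mathrm{Vol}(M)$, so Markov's inequality gives $\mathbb{P}(D_{\min}\leq s) = \mathbb{P}(W_s\geq 1)\leq \mathbb{E}[W_s]$. Taking $s=2c(d)N^{-2/d}$ yields $\mathbb{E}[W_s]=O(c(d)^d)$, which captures the correct $N^{-2/d}$ order. Finally, at small scales $\|\mathbf{x}_i-\mathbf{x}_j\| = d_g(\mathbf{x}_i,\mathbf{x}_j)(1+O(d_g^2))$, so the geodesic bound transfers to the Euclidean separation distance $q_{X,\mathbb{R}^n}$ used in Proposition~\ref{prop:unstable} up to a constant factor, once $N$ is large enough that $D_{\min}$ lies below the comparison threshold.

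For the upper bound, the role of the extra $\log N$ factor is precisely to convert the weak statement $\mathbb{E}[W_s]\to\infty$ into a genuinely small lower-tail probability. With $s=C(d)(\log N/N^2)^{1/d}$ one gets $\mathbb{E}[W_s]\asymp \log N$, while the dependency sum over pairs sharing a common index, $\Delta := \sum_{\{i,j\}\cap\{k,l\}\neq\emptyset}\mathbb{P}(d_g(\mathbf{x}_i,\mathbf{x}_j)\leq s,\,d_g(\mathbf{x}_k,\mathbf{x}_l)\leq s)$, is of order $N^3 s^{2d}\asymp (\log N)^2/N\to 0$. A Poisson-type lower-tail bound (Janson's inequality, or the Stein--Chen method for the $U$-statistic $W_s$) then gives $\mathbb{P}(D_{\min}>s)=\mathbb{P}(W_s=0)\leq \exp(-\mathbb{E}[W_s]+\Delta/2) = N^{-\Theta(C(d)^d)}$, and choosing $C(d)$ large enough drives this below $1/N$.

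The main obstacle is the probability control, and it sits on the lower bound. The first-moment computation pins down the right $N^{-2/d}$ order but at that critical scale bounds the failure probability only by a constant that shrinks with $c(d)$ rather than with $N$: the rescaled statistic $N^{2/d}D_{\min}$ has a nondegenerate Weibull-type limit, so $\mathbb{P}(D_{\min}<c(d)N^{-2/d})$ cannot be pushed to $0$ at a fixed scale $c(d)$. Obtaining the clean $1-O(1/N)$ guarantee therefore requires either allowing $c(d)$ to degrade slowly in $N$ or a finer tail analysis of $W_s$; reconciling this with the fixed-constant statement is the delicate point, whereas the upper tail follows cleanly from the Janson-type estimate and the built-in $\log N$ margin.
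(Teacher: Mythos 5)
Your overall strategy coincides with the paper's: both arguments reduce the two inequalities in \eqref{q_bound} to lower- and upper-tail bounds on the minimum pairwise geodesic distance, both rest on two-sided volume comparisons for small geodesic balls (the paper invokes Gray's estimate $\mathrm{Vol}(B_\delta(\mathbf{x}))\leq C(d)\delta^d$ under positive Ricci curvature and Croke's estimate $\mathrm{Vol}(B_{\delta_0}(\mathbf{x}))\geq C\delta_0^d$), and both finish by combining the two failure events. On the upper bound your route is in fact more careful than the paper's: the paper bounds $\mathbb{P}(q_{X,M}>\delta_0)$ by the \emph{product} over $i$ of $\mathbb{P}(\min_{j\neq i}d_g(\mathbf{x}_i,\mathbf{x}_j)>2\delta_0)$, which tacitly treats these $N$ dependent events as independent; your Janson/Stein--Chen estimate with the dependency sum $\Delta$ is the standard rigorous substitute and lands on the same $(\log N/N^2)^{1/d}$ rate.

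The obstacle you flag on the lower bound is real, and it is exactly where the paper's proof goes astray. The paper writes $\mathbb{P}[q_{X,M}<\delta]\leq\sum_{i=1}^N\mathbb{P}[d_g(\mathbf{x}_i,\mathbf{x}_j)<2\delta\ \text{for some}\ j\neq i]$ and then bounds each summand by the \emph{single-pair} probability $C(d)\delta^d/\mathrm{Vol}(M)$, omitting the union over $j$; the total is therefore of order $N\delta^d$ rather than the correct $\binom{N}{2}\delta^d$, and equating $N\delta^d$ with $1/N$ is precisely what produces $\delta\asymp N^{-2/d}$. With the correct pair count, forcing the failure probability down to $1/N$ requires $\delta\asymp N^{-3/d}$, and, as you observe, no refinement can do better at a fixed constant $c(d)$: at scale $N^{-2/d}$ the number of close pairs has a nondegenerate Poisson limit, so $\mathbb{P}\bigl(q_{X,M}<c(d)N^{-2/d}\bigr)$ tends to a positive constant rather than to zero. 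The honest fix is to state the lower bound as $c(d)N^{-3/d}$ (or keep $N^{-2/d}$ at constant confidence); this weaker version is all that is used downstream, since Proposition~\ref{prop:unstable} only needs $q_{X,M}$ bounded below by an inverse polynomial in $N$ to conclude that the smallest eigenvalue of $\boldsymbol{\Phi}$ decays exponentially. So your proposal is not missing an idea --- it correctly diagnoses that the stated lower bound cannot be proved as written, and the paper's own argument reaches it only through the undercounted union bound.
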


Before we prove this, we note that if we denote the upper bound in \eqref{q_bound} as $\epsilon$, then we can specify the size of $N$ which allows $q_{X,M} = q_{X,\mathbb{R}^d}+O(\epsilon^3)$ as $\epsilon\to 0$ or $N\to \infty$.

\begin{proof}
Let $\delta>0$ and suppose that $q_{X,M}(\mathbf{x}_i,\mathbf{x}_j)< 2\delta$. This means for some $i$, there exists $j\neq i$ such that $d_g(\mathbf{x}_i,\mathbf{x}_j)<2\delta$. If the geodesic ball of radius $\delta$ centered at $\mathbf{x}_i$ is $\textup{Vol}(B_\delta(\mathbf{x}_i))$, then the probability such that $\mathbf{x}_j \in \textup{Vol}(B_\delta(\mathbf{x}_i))$, i.e., $d_g(\mathbf{x}_i,\mathbf{x}_j)<2\delta$, is $\textup{Vol}(B_\delta(\mathbf{x}_i))/\textup{Vol}(M)$. Based on Corollary~3.2 in \cite{gray1974volume}, if $M$ has positive Ricci scalar curvature at $\mathbf{x}_i$, then $\textup{Vol}(B_\delta(\mathbf{x}_i))< C(d)\delta^d$, for some constant $C(d)>0$. This means,
\[
\mathbb{P}[q_{X,M}<\delta] \leq \sum_{i=1}^N \mathbb{P}[d_g(\mathbf{x}_i,\mathbf{x}_j)<2\delta \, \textup{for some } \mathbf{x}_j\in X, \,\mathbf{x}_j \neq \mathbf{x}_i] \leq \frac{NC(d)\delta^d}{\textup{Vol}(M)}.
\]
Choosing $\frac{N C(d) \delta^d}{\textup{Vol}(M)} = \frac{1}{N}$, then with probability higher than $1-\frac{1}{N}$,
\BEA
q_{X,M}\geq \left(\frac{\textup{Vol}(M)}{C(d)}\right)^{1/d} N^{-\frac{2}{d}}:=c(d)N^{-\frac{2}{d}}.\label{qlowerbdd}
\EEA
Next, we deduce an upper bound for $q_{X,M}$. For each $\mathbf{x}_i$, the probability of $d_g(\mathbf{x}_i,\mathbf{x}_j) >2\delta_0$ is given by $1 - \textup{Vol}(B_{\delta_0}(\mathbf{x}_i))/\textup{Vol}(M) \leq 1 - C\delta_0^d$, using the lower bound  (see Proposition 14 in \cite{croke1980some}) $\textup{Vol}(B_{\delta_0}(\mathbf{x}_i)) \geq C \delta_0^d$, for all $\delta_0\leq |\text{inj}(M)|/2$ where $\text{inj}(M)$ is the injectivity radius. This means,
\[
\mathbb{P}(\min_{j\neq i} d_g(\mathbf{x}_i,\mathbf{x}_j) >2\delta_0, \textup{for a fixed }i) \leq (1-C\delta_0^d)^N \leq \exp(-CN\delta_0^d),
\]
which also implies that,
\[
\mathbb{P}(q_{X,M} >\delta_0)\leq \Pi_{i=1}^N\mathbb{P}(\min_{j\neq i} d_g(\mathbf{x}_i,\mathbf{x}_j) >2\delta_0, \textup{for fixed }\mathbf{x}_i) \leq \exp(-CN^2\delta_0^d).
\]
Setting $ \exp(-CN^2\delta_0^d) = \frac{1}{N}$, then with probability higher than $1-\frac{1}{N}$,
\[
q_{X,M} \leq C\left(\frac{\log N}{N^2}\right)^{\frac{1}{d}}.
\]
Together with \eqref{qlowerbdd}, the proof is completed.
\end{proof}

\comment{
\begin{proof}[Proof of Proposition~\ref{prop:unstable}]

As we mentioned above, a lower bound to $\lambda_{\textup{min}}(\boldsymbol{\Phi})$ in terms of the separation distance has been known in literature, namely, by Theorem~12.3 in  \cite{Wendland2005Scat},
\[
\lambda_{\textup{min}}(\boldsymbol{\Phi}) \geq C(d) \varphi_0(M_d/q_{X,\mathbb{R}^n})q_{X,\mathbb{R}^n}^{-n},
\]
where
\[
\varphi_0(M) := \inf_{\|\omega\|_2\leq 2M} \hat{\Phi}(\omega),
\]
with $ \hat{\Phi}(\omega)$ being the Fourier transform of $\Phi$ and $M_d=6.38d$ is a constant of at least order-1.

In our case, the kernel has a Fourier transform \cite{fuselier2012scattered},
\[
\hat{\Phi}(\omega) \sim (1+ \|\omega\|_2^2)^{-\alpha}, \quad \alpha>n/2.
\]
so
\[
\varphi_0(M) =  (1+ 4M^2)^{-\alpha}
\]
and for some constants, $C(d), \hat{C}(d)>0$,
\BEA
\lambda_{\textup{min}}(\boldsymbol{\Phi}) &\geq& C(d) \left(1+ \frac{4M_d^2}{q_{X,\mathbb{R}^n}^2}\right)^{-\alpha}q_{X,\mathbb{R}^n}^{-n} = C(d) \left(q_{X,\mathbb{R}^n}^2+  4M_d^2 \right)^{-\alpha} q_{X,\mathbb{R}^n}^{2\alpha -n} \notag \\
&\geq& C \left(q_{X,M}^2+  4M_d^2 \right)^{-\alpha} q_{X,M}^{2\alpha-n}  \notag \\&\geq& \hat{C}(d) \left(N^{-4/d}\log(N)^{2/d} + 4M_d^2 \right)^{-\alpha} N^{-\frac{4\alpha-2n}{d}}, \notag
\EEA
where we have used the fact that $q_{X,\mathbb{R}^n}\approx q_{X,M}$ for large enough $N$ and we have also employed the bounds in \eqref{q_bound}.
\end{proof}
We also note that there is a list of lower bounds reported in Table~12.1 in \cite{Wendland2005Scat} for other kernels. For inverse quadratic kernel, which we used in our numerical study, one can see that the eigenvalue is bounded lower a factor that depends on $\exp(-Cq_{X,\mathbb{R}^n}^{-1})$ which gives an exponentially unstable RBF interpolation if the full inversion of the matrix $\boldsymbol{\Phi}$ is used in solving the linear system in \eqref{RBF-inter}.

}

\begin{proof}[Proof of Proposition~\ref{prop:unstable}]
When $\Phi$ is an inverse quadratic kernel, the lower bound to $\hat{\lambda}_{\textup{min}}(\boldsymbol{\Phi})$ in terms of the separation distance has been reported in Table~12.1 in  \cite{Wendland2005Scat},
\BEA
\hat{\lambda}_{\textup{min}}(\boldsymbol{\Phi})&\geq& C\exp(-Cq_{X,\mathbb{R}^n}^{-1})  \notag\\
&\geq& C \exp(-Cq_{X,M}^{-1}) \notag\\
&\geq&C\exp(-\hat{C}(d)N^{2/d})  \notag
\EEA
where we have used the fact that $q_{X,\mathbb{R}^n}\approx q_{X,M}$ for large enough $N$ and we have also employed the bounds in \eqref{q_bound}.
\end{proof}

{\color{black}\section{A short review of the RBF-FD approximation.}\label{AppB}
In this section, we provide a short review of the RBF-generated Finite Difference (FD) method to approximate Laplace-Beltrami operator $\Delta_M$ which was introduced in \cite{shankar2015radial}.

Let $M$ be a $d$-dimensional smooth manifold embedded in $\mathbb{R}^n$. Given a set of (distinct) nodes $X=\{\textbf{x}_i\}_{i=1}^N\subset M$ and function values $\mathbf{f}:=(f(\textbf{x}_1),\ldots, f(\textbf{x}_N))^\top$ where $f:M\rightarrow\mathbb{R}$ is an arbitrary smooth function. For a point $\underline{\textbf{x}}\in X$, we denote its $K$ nearest neighbors in $X$ by $S_{\underline{\textbf{x}}}=\{\underline{\textbf{x}_k}\}_{k=1}^K\subset X$. The set $S_{\underline{\textbf{x}}}$ is called  a "stencil" on the manifold corresponding to $\underline{\textbf{x}}$. By definition, we have $\underline{\textbf{x}}=\underline{\textbf{x}_1}$. Denote $\textbf{f}_{\underline{\textbf{x}}}=(f(\underline{\textbf{x}_1}),...,f(\underline{\textbf{x}_K}))^\top$. Our goal is to approximate the Laplacian $\Delta_M$ at the point $\underline{\textbf{x}}$ by a linear combination of the function values $\{f(\underline{\textbf{x}_k})\}_{k=1}^K$, i.e., find the weights $\{w_k\}_{k=1}^K$ such that
\BEA
\Delta_Mf(\underline{\textbf{x}})\approx \sum_{k=1}^Kw_kf(\underline{\textbf{x}_k}).
\label{eqn:RBF-FD-weights}
\EEA
Arranging the weights at each point into each row of a sparse $N$ by $N$ matrix $L_X$, we can approximate the operator over all points by $L_X\mathbf{f}$.

To find the weights $\{w_k\}_{k=1}^K$, we apply the RBF interpolation locally over $S_{\underline{\textbf{x}}}$, i.e., for $\textbf{x}$ in a neighborhood of $\underline{\textbf{x}}$,
$$
I_\phi\textbf{f}(\textbf{x})=\sum_{k=1}^Kc_k\phi(\Vert\textbf{x}-\underline{\textbf{x}_k}\Vert).
$$
Following the steps from (\ref{G_ell}) to (\ref{NRBF_Laplacian}) , one can obtain a $K$ by $K$ matrix $\Delta_{S_{\underline{\textbf{x}}}}^{\mathrm{RBF}}$ to approximate $\Delta_M$ over the stencil $S_{\underline{\textbf{x}}}$. Because of the ordering of nodes in $S_{\underline{\textbf{x}}}$, the weights $\{w_k\}_{k=1}^K$ in (\ref{eqn:RBF-FD-weights}) are given by the entries in the first row of the matrix $\Delta_{S_{\underline{\textbf{x}}}}^{\mathrm{RBF}}$.

In the Figure \ref{fig-ellipse}(a), we test the RBF-FD approximation using the Polyharmonic spline (PHS) kernel $\phi(r)=r^3$ with polynomials of degree up to $1$ on the simple case (ellipse) in Section \ref{sec4.2}. And we choose $K=21$. As mentioned in \cite{flyer2016role}, the RBF interpolation for conditionally positive definite radial basis function like PHS are often modified to include polynomial terms. In this example, we consider the RBF interpolation including linear terms in 2-D which could be written as,
$$
I_\phi\textbf{f}(\textbf{x})=\sum_{k=1}^Kc_k\phi(\Vert\textbf{x}-\underline{\textbf{x}_k}\Vert)+\gamma_1+(\gamma_2 x^1+\gamma_3 x^2),\quad \textbf{x}=(x^1,x^2)\in\mathbb{R}^2
$$
together with the constraints
$$
\sum_{k=1}^Kc_k=\sum_{k=1}^Kc_kx^1_k=\sum_{k=1}^Kc_kx^2_k=0.
$$

In the Figure \ref{ellipse_matern}, we show an additional result from the RBF-FD approach using the following Mat\'ern kernel,
$$
\phi_{\frac{n+3}{2}}(r)=(sr)^{3/2}\frac{e^{-sr}}{\sqrt{sr}}(1+\frac{1}{sr})=(1+sr)e^{-sr}.
$$
In the numerical result, the shape parameter is chosen as $s=2.5$.  To obtain the convergence, we have to choose $K=2\sqrt{N}$. To invert the interpolation matrix $\mathbf{\Phi}$, we use the ridge-regression solution $(\mathbf{\Phi}+\sigma\textbf{I})^{-1}$ with $\sigma=10^{-6}/N$. Compared to Figure~\ref{fig-ellipse}(a), we notice the significant improvement of Direct-RBF-FD when the kernel is changed from PHS to the Mat\'ern function, which indicates the result is sensitive to the choice of kernel, and possibly, to the distribution of the point cloud data. On the other hand, the error corresponding to Spectral-RBF-FD remains the same when the kernel is changed from PHS to the Mat\'ern function. That is, Spectral-RBF-FD is still about five time less accurate compared to Spectral-RBF that used a global RBF interpolation function.

 \begin{figure*}[tbp]%
 \centering
\includegraphics[width=0.6\linewidth]{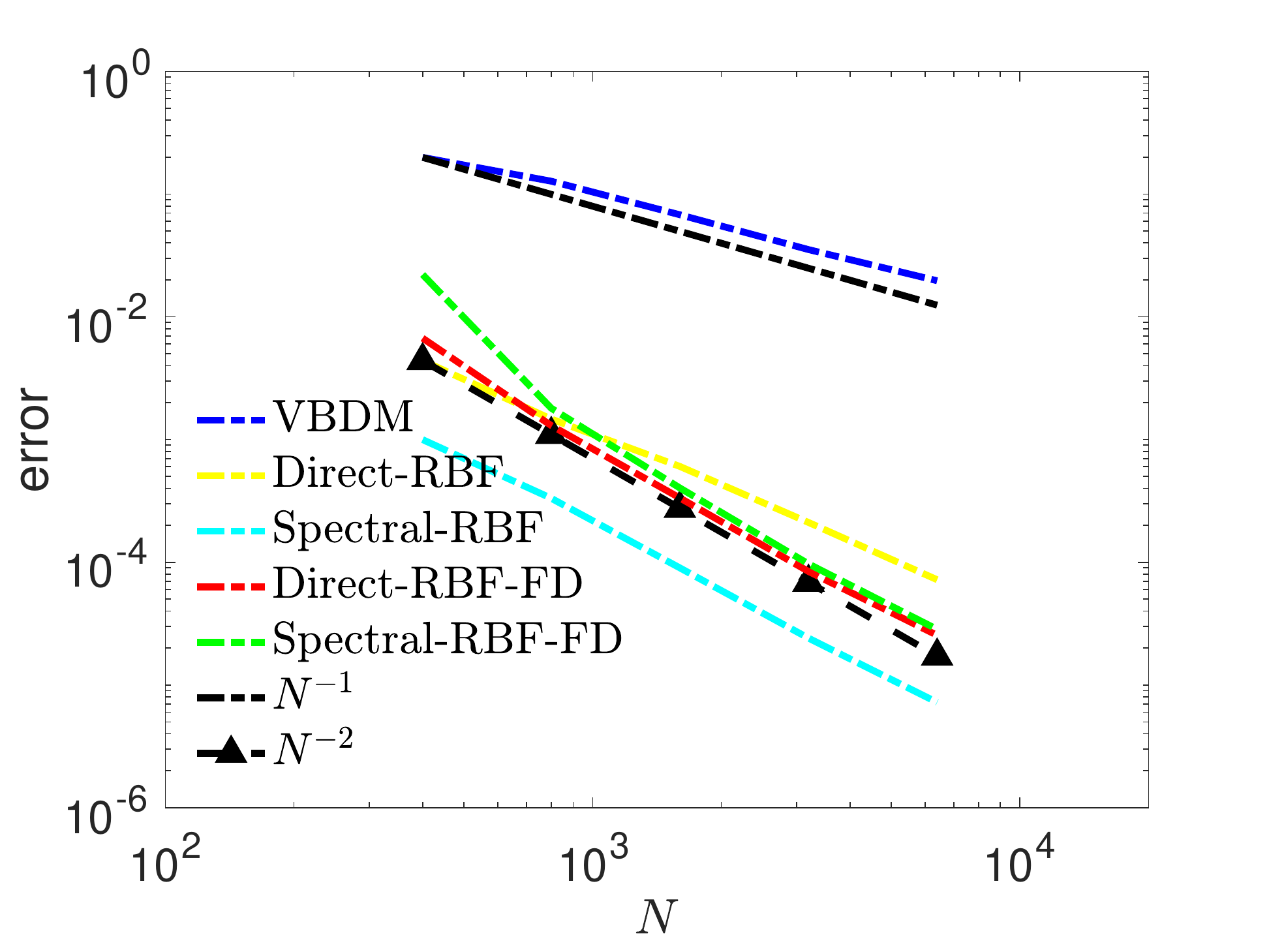}
\caption{Ellipse example: All curves except Direct-RBF-FD and Spectral-RBF-FD in this figure are exactly identical to those in Figure~\ref{fig-ellipse}(a). For these two schemes that involve local RBF interpolation, the errors shown in this figure are based on the Mat\'ern kernel whereas those in Figure~\ref{fig-ellipse}(a) are based on the PHS kernel.}
\label{ellipse_matern}
 \end{figure*}

}


\end{document}